\newtheorem{Thm}{Theorem}[section]
\newtheorem{Prop}[Thm]{Proposition}
\newtheorem{Cor}[Thm]{Corollary}
\newtheorem{Conj}[Thm]{Conjecture}
\theoremstyle{remark}
\newtheorem*{Notation}{Notation}
\newtheorem*{Ack}{Acknowledgement}
\newcommand{\Order}{\mathcal{O}}
\newcommand{\into}{\hookrightarrow}
\newcommand{\isomto}{\overset{\sim}{\to}}
\newcommand{\isomfrom}{\overset{\sim}{\leftarrow}}
\newcommand{\tensor}{\mathbin{\otimes}}
\newcommand{\closure}[1]{\overline{#1}}
\newcommand{\N}{\mathbb{N}}
\newcommand{\Z}{\mathbb{Z}}
\newcommand{\Q}{\mathbb{Q}}
\newcommand{\R}{\mathbb{R}}
\newcommand{\F}{\mathbb{F}}
\newcommand{\et}{\mathrm{et}}
\newcommand{\fppf}{\mathrm{fppf}}
\newcommand{\cont}{\mathrm{cont}}
\newcommand{\id}{\mathrm{id}}
\newcommand{\Gm}{\mathbf{G}_{m}}
\newcommand{\Ga}{\mathbf{G}_{a}}
\newcommand{\tor}{\mathrm{tor}}
\newcommand{\divis}{\mathrm{div}}
\newcommand{\invlim}{\varprojlim}
\newcommand{\vol}{\mathrm{vol}}
\newcommand{\pairing}[2]{\langle #1, #2 \rangle}
\newcommand{\adele}{\mathbb{A}}
\newcommand{\sep}{\mathrm{sep}}
\newcommand{\ur}{\mathrm{ur}}
\newcommand{\Mod}{\mathrm{Mod}}
\newcommand{\Ind}{\mathrm{Ind}}
\newcommand{\Pro}{\mathrm{Pro}}
\DeclareMathOperator{\Hom}{Hom}
\DeclareMathOperator{\Spec}{Spec}
\DeclareMathOperator{\Ab}{Ab}
\DeclareMathOperator{\Pic}{Pic}
\DeclareMathOperator{\Lie}{Lie}
\DeclareMathOperator{\Disc}{Disc}
\DeclareMathOperator{\rank}{rank}
\let\Re\relax
\DeclareMathOperator{\Re}{Re}
\DeclareFontFamily{U}{wncy}{}
\DeclareFontShape{U}{wncy}{m}{n}{<->wncyr10}{}
\DeclareSymbolFont{mcy}{U}{wncy}{m}{n}
\DeclareMathSymbol{\Sha}{\mathord}{mcy}{"58}
\title[A Weil-\'etale version of the BSD formula]
	{A Weil-\'etale version of the Birch and Swinnerton-Dyer formula over function fields}
\author{Thomas H. Geisser}
\address{
	Rikkyo University, Ikebukuro, Tokyo, Japan
}
\email{geisser@rikkyo.ac.jp}
\author{Takashi Suzuki}
\address{
	Department of Mathematics, Chuo University,
	1-13-27 Kasuga, Bunkyo-ku, Tokyo 112-8551, JAPAN
}
\email{tsuzuki@gug.math.chuo-u.ac.jp}
\thanks{
	The first named author is supported by JSPS Grant-in-Aid (A) 15H02048-1, (C) 18K03258.
	The second named author is a Research Fellow of Japan Society for the Promotion of Science
	and supported by JSPS KAKENHI Grant Number JP18J00415.
}
\date{September 14, 2019}
\subjclass[2010]{Primary: 11G40; Secondary: 14F20, 14F42}
\keywords{Birch and Swinnerton-Dyer conjecture; global function fields; Weil-\'etale cohomology}
\begin{document}

\begin{abstract}
We give a reformulation of the Birch and Swinnerton-Dyer conjecture over
global function fields in terms of Weil-\'etale cohomology of the curve with coefficients in the N\'eron
model, and show that it holds under the assumption of
finiteness of the Tate-Shafarevich group.
\end{abstract}

\maketitle

%%%%%%%%%%%%%%%%%%%%%%%%%%%%%%%%%%%%%%%%%%%%%%%%%%%%%%%%%%%%%%%%%%%%%%%%%%%%%

\section{Introduction}
\label{sec: Introduction}

The conjecture of Birch and Swinnerton-Dyer is one of the most
important problems in arithmetic geometry. It states that the rank 
of the rational points $A(K)$ 
of an abelian variety $A$ over a global field $K$ is equal to the order of 
vanishing of the $L$-function $L(A,s)$ associated with $A$ at $s=1$, 
and relates the
leading term of the $L$-function to various invariants associated with $A$.
If $K$ has characteristic $p$ and under the assumption of
finiteness of the Tate-Shafarevich group $\Sha(A)$, Schneider \cite{Sch82} proved a formula
for the prime to $p$-part of the leading coefficient, Bauer \cite{Bau92} gave a formula
in case $A$ has good reduction at every place, and Kato-Trihan \cite{KT03} proved
a formula in general. 

In this paper we give a formula for the leading coefficient in terms of
Weil-\'etale cohomology $H_{W}^{\ast}(S, \mathcal{A})$ of the regular complete curve $S$ with function field $K$
with coefficients in the N\'eron model $\mathcal{A}$ of $A$.
More precisely,
let $\F_{q}$ be the field of constants of $S$
and $e \in H_{W}^{1}(\F_{q}, \Z)$ the element
corresponding to the $q$-th power arithmetic Frobenius morphism.
The cup product with $e$ defines a complex
	\[
			\cdots
		\overset{e}{\to}
			H_{W}^{i}(S, \mathcal{A})
		\overset{e}{\to}
			H_{W}^{i + 1}(S, \mathcal{A})
		\overset{e}{\to}
			H_{W}^{i + 2}(S, \mathcal{A})
		\overset{e}{\to}
			\cdots,
	\]
whose cohomology groups are finite
if the groups $H_{W}^{\ast}(S, \mathcal{A})$ are finitely generated.
In this case, we denote the alternating product of their orders by $\chi(H_{W}^{\ast}(S, \mathcal{A}), e)$.
Let $\Lie(\mathcal{A})$ be the vector bundle on $S$
defined by the pullback of the dual of $\Omega_{\mathcal{A} / S}^{1}$
by the zero section $S \into \mathcal{A}$,
and $\chi(S, \Lie(\mathcal{A}))$ the alternating sum of dimensions of
the coherent cohomology $H^{\ast}(S, \Lie(\mathcal{A}))$ over $\F_{q}$.

\begin{Thm} \label{thm: BSD by Weil etale}
	Let $A$ be an abelian variety over a global field $K$ of 
	characteristic $p$ and assume that $\Sha(A)$ is finite.
	Then the rank $r$ of $A(K)$ equals the order of vanishing of $L(A,s)$ at $s=1$,
	the groups $H_{W}^{\ast}(S, \mathcal{A})$ are finitely generated, and
		\[
				\lim_{s \to 1}
					\frac{L(A, s)}{(s - 1)^{r}}
			=
				\chi(H_{W}^{\ast}(S, \mathcal{A}), e)^{-1}
				\cdot q^{\chi(S, \Lie(\mathcal{A}))}\cdot (\log q)^{r}.
		\]
\end{Thm}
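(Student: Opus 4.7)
The plan is to establish finite generation of $H_{W}^{\ast}(S, \mathcal{A})$ and then identify $\chi(H_{W}^{\ast}(S, \mathcal{A}), e)^{-1} \cdot q^{\chi(S, \Lie(\mathcal{A}))} \cdot (\log q)^{r}$ with the leading coefficient of $L(A, s)$ at $s = 1$, reducing to the Kato-Trihan formula. The rank equality --- that the order of vanishing of $L(A, s)$ at $s = 1$ equals $r$ --- is known under finiteness of $\Sha(A)$ by the work of Tate and Kato-Trihan, so only the leading-coefficient formula is at issue.

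First I would build a concrete description of $H_{W}^{\ast}(S, \mathcal{A})$. The standard Weil-\'etale recipe should furnish a distinguished triangle relating $R\Gamma_{W}(S, \mathcal{A})$ to $R\Gamma_{\et}(S, \mathcal{A})$ by a Frobenius-type correction coming from $\Spec \F_{q}$. Combining this with the N\'eron mapping property $\mathcal{A}(S) = A(K)$, the Mordell-Weil theorem, finiteness of $\Sha(A)$, and finiteness of the local component groups at places of bad reduction, I expect finite generation of each $H_{W}^{i}(S, \mathcal{A})$ together with an explicit description of the low-degree groups in terms of $A(K)$, $A^{t}(K)$, $\Sha(A)$, and the Tamagawa data.

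Next I would analyze the $e$-complex. On the torsion subquotients, cup product with $e$ should implement an Artin-Verdier type duality on $S$, so that the alternating product of torsion orders in $H^{\ast}(H_{W}^{\ast}, e)$ collapses to the finite BSD ratio $|\Sha(A)| \prod_{v} c_{v} / (|A(K)_{\tor}| |A^{t}(K)_{\tor}|)$. On the torsion-free quotients, $H_{W}^{i}(S, \mathcal{A}) \otimes \R$ should recover the real spaces $A(K) \otimes \R$ and $A^{t}(K) \otimes \R$ in the appropriate degrees; the map induced by cup product with $e$ should agree, up to the scalar $\log q$, with the N\'eron-Tate height pairing, whose determinant with respect to the integral lattices contributes $R(A) \cdot (\log q)^{r}$. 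Putting the two computations together should yield
	\[
			\chi(H_{W}^{\ast}(S, \mathcal{A}), e)^{-1}
		=
			\frac{|\Sha(A)| \cdot R(A) \cdot \prod_{v} c_{v}}{|A(K)_{\tor}| \cdot |A^{t}(K)_{\tor}|}
			\cdot (\log q)^{r}.
	\]

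Finally, the factor $q^{\chi(S, \Lie(\mathcal{A}))}$ has to match the differential/period contribution in Kato-Trihan. This would come from an integral comparison between the coherent $H^{\ast}(S, \Lie(\mathcal{A}))$ and a $p$-primary (flat or syntomic) cohomology of $\mathcal{A}$, reading off the period-type determinant. I expect the hardest step to be exactly this $p$-primary part: in characteristic $p$ the usual \'etale cohomology does not see $A[p^{\infty}]$ cleanly, so one must work on a finer site on which the Lie-algebra lattice can be matched with the $p$-primary contributions to $\Sha(A)$, $A(K)_{\tor}$, and $A^{t}(K)_{\tor}$. Once this integral comparison is in place, combining the previous steps with Kato-Trihan \cite{KT03} completes the proof.
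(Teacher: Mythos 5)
The central gap is your treatment of the factor $q^{\chi(S, \Lie(\mathcal{A}))}$. In the actual argument this factor does not come from any comparison between the coherent cohomology $H^{\ast}(S, \Lie(\mathcal{A}))$ and a flat/syntomic $p$-primary cohomology of $\mathcal{A}$; all of the deep $p$-adic work stays inside the cited Kato--Trihan theorem, which both you and the paper take as a black box. What is needed instead is an elementary conversion of the measure-theoretic terms in the Tate/Kato--Trihan formulation: one computes $\mu_{v}(A(K_{v})) = \#\pi_{0}(\mathcal{A}_{v})(k(v)) \cdot \mu_{v}(\Lie(\mathcal{A})(\Order_{v})) \cdot \#\mathcal{A}^{0}(k(v)) / N(v)^{d}$ using smoothness of $\mathcal{A}$, Lang's theorem and the normalization $\mu_{v}(\mathcal{A}(m_{v})) = \mu_{v}(\Lie(\mathcal{A})(m_{v}))$; one identifies $\#\mathcal{A}^{0}(k(v))/N(v)^{d}$ with the bad Euler factor $P_{v}(N(v)^{-1})$; and one converts $\mu\bigl(\Lie(A)(\adele_{K})/\Lie(A)(K)\bigr)$ into $\mu\bigl(\Lie(\mathcal{A})(\Order_{\adele_{K}})\bigr) \cdot q^{-\chi(S, \Lie(\mathcal{A}))}$ by a Zariski cohomology/approximation argument and Riemann--Roch. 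Your proposal never addresses the bad Euler factors at all: Kato--Trihan's formula concerns $L(U, A, s)$ with the factors at bad places removed, whereas the theorem is about the completed $L(A, s)$, and the identity $P_{v}(N(v)^{-1}) = \#\mathcal{A}^{0}(k(v))/N(v)^{d}$ is exactly what makes these factors cancel against part of the volume term. Moreover, the picture that a finer site is needed so that the Lie-algebra lattice can be matched with the $p$-primary parts of $\Sha(A)$ and the torsion subgroups is off: since $\mathcal{A}$ is smooth, $H_{W}^{\ast}(S, \mathcal{A})$ already contains the full (including $p$-primary) contributions of $\Sha(A)$, $A(K)_{\tor}$ and $B(K)_{\tor}$, and $q^{\chi(S, \Lie(\mathcal{A}))}$ is purely the volume/period term.

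Two further problems. First, your displayed identity for $\chi(H_{W}^{\ast}(S, \mathcal{A}), e)^{-1}$ cannot be right as written: the left side is a rational number, and the correct identity is $\chi(H_{W}^{\ast}(S, \mathcal{A}), e)^{-1} = \frac{\# \Sha(A)}{\# A(K)_{\tor} \cdot \# B(K)_{\tor}} \cdot \frac{\Disc(h)}{(\log q)^{r}} \cdot c(A)$, with the real regulator \emph{divided} by $(\log q)^{r}$; the $(\log q)^{r}$ in the theorem then comes solely from converting $(1 - q^{1-s})^{r}$ into $(s-1)^{r}$. Relatedly, the clean split you assert (torsion part gives exactly the finite BSD ratio, free part exactly the regulator) is not how the computation decomposes: the duality is against $\mathcal{B}^{0}$ with $\Z$-coefficients (a nontrivial input from Suzuki), the torsion Euler characteristic carries an extra index $[B(K)/\tor : \mathcal{B}^{0}(S)/\tor]$, and the free part yields the discriminant of the integral pairing $A(K) \times \mathcal{B}^{0}(S) \to \Pic(S) \to \Z$; only Schneider's comparison of this pairing with the height pairing makes the index cancel and produces $\Disc(h)/(\log q)^{r}$. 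Second, ``I expect finite generation'' is not an argument: the genuine difficulty is the divisible part of $H_{\et}^{1}(\closure{S}, \closure{\mathcal{A}})$, whose Frobenius invariants and coinvariants must be shown finite, respectively trivial, and this is where finiteness of $\Sha(A)$ enters via a Tate-module argument resting on the structure of the cohomology group schemes $\mathbf{H}^{n}(S, \mathcal{A})$; Mordell--Weil together with finiteness of the local component groups does not by itself give finite generation of $H_{W}^{1}(S, \mathcal{A})$.
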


The same statements holds if we replace $\mathcal A$ by $\mathcal A^0$,
the subgroup scheme with all fibers connected. Our result fits into the
general philosophy that important conjectures in arithmetic 
geometry are equivalent to finite generation statements of Weil-\'etale
cohomology groups, and special values of zeta and $L$-functions can
be expressed as Euler characteristics of Weil-\'etale cohomology
(\cite{Lic05}, \cite{Gei04}).

The proof proceeds by showing that finiteness of $\Sha(A)$ is equivalent
to finite generation of the groups $H_{W}^i(S, \mathcal{A})$, and implies
an identity
	\[
			\chi(H_{W}^{\ast}(S, \mathcal{A}), e)^{-1}
		=
			\frac{
				\# \Sha(A)
			}{
				\# A(K)_{\tor} \cdot \# B(K)_{\tor}
			}
		\cdot
			\frac{
				\Disc(h)
			}{
				(\log q)^{r}
			}
		\cdot
			c(A),
	\]
where $B$ is the abelian variety dual to $A$, $h$ the height pairing,
and $c(A)$ the product of the orders of the groups of $k(v)$-rational connected components of 
the fibers of the N\'eron model over all places $v$.
Key ingredients are results of the second named author \cite{Suz19}.
Theorem \ref{thm: BSD by Weil etale} follows then by applying the 
result of Kato-Trihan \cite[Chap.\ I, Thm.]{KT03}. 

At the end of the paper we show that Weil-\'etale cohomology is
an integral model of $l$-adic cohomology:

\begin{Thm}
	Assume that $\Sha(A)$ is finite.
	Let $l$ be a prime
	and ${}_{l^{i}}(\mathcal{A}^{0})$ the $l^{i}$-torsion part of $\mathcal{A}^{0}$.
	If $l \ne p$, then the canonical homomorphism
		\[
				H_{W}^{n}(S, \mathcal{A}^{0}) \tensor \Z_{l}
			\to
				\invlim_{i}
					H_{\et}^{n + 1}(S, {}_{l^{i}}(\mathcal{A}^{0}))
		\]
	is an isomorphism.
	If $l = p$ and $A$ has semistable reduction everywhere,
	then the canonical homomorphism
		\[
				H_{W}^{n}(S, \mathcal{A}^{0}) \tensor \Z_{p}
			\to
				\invlim_{i}
					H_{\fppf}^{n + 1}(S, {}_{p^{i}}(\mathcal{A}^{0}))
		\]
	is an isomorphism.
\end{Thm}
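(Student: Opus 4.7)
The plan is to deduce the theorem from the Bockstein long exact sequence attached to the multiplication-by-$l^{i}$ short exact sequence
\[
	0 \to {}_{l^{i}}(\mathcal{A}^{0}) \to \mathcal{A}^{0} \overset{l^{i}}{\to} \mathcal{A}^{0} \to 0,
\]
together with the finite generation of $H_{W}^{\ast}(S, \mathcal{A}^{0})$ proved above under the assumption that $\Sha(A)$ is finite.

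First I would check exactness of the displayed sequence in the appropriate Grothendieck topology. For $l \ne p$ this is the usual Kummer sequence in the \'etale topology. For $l = p$, the semistable reduction hypothesis ensures that, fppf-locally on $S$, the identity component $\mathcal{A}^{0}$ is an extension of an abelian scheme by a torus, on both of which multiplication by $p^{i}$ is fppf-surjective. Applying $H_{W}^{\ast}(S, -)$ and extracting the Bockstein produces, for each $n$,
\[
	0 \to H_{W}^{n}(S, \mathcal{A}^{0}) / l^{i} \to H_{W}^{n + 1}(S, {}_{l^{i}}(\mathcal{A}^{0})) \to H_{W}^{n + 1}(S, \mathcal{A}^{0})[l^{i}] \to 0.
\]
Next I would identify the middle term with classical \'etale (respectively fppf) cohomology: for any torsion sheaf, Weil-\'etale and \'etale cohomology agree, because they differ only through the substitution of $W(\F_{q}) = \Z$ for $\Gal(\closure{\F_{q}} / \F_{q}) = \widehat{\Z}$ at the base field, and these groups have the same cohomology on finite modules.

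Finally, I would pass to $\invlim_{i}$. The inverse systems $\{ H_{W}^{n}(S, \mathcal{A}^{0}) / l^{i} \}_{i}$ and $\{ H_{W}^{n + 1}(S, \mathcal{A}^{0})[l^{i}] \}_{i}$ are Mittag-Leffler (the latter consists of finite groups of bounded order), so $\invlim_{i}$ is exact on the sequence above. Finite generation of $H_{W}^{\ast}(S, \mathcal{A}^{0})$ then yields
\[
	\invlim_{i} H_{W}^{n}(S, \mathcal{A}^{0}) / l^{i} = H_{W}^{n}(S, \mathcal{A}^{0}) \tensor \Z_{l},
\]
while $\invlim_{i} H_{W}^{n + 1}(S, \mathcal{A}^{0})[l^{i}]$ is the $l$-adic Tate module of a finitely generated abelian group, hence vanishes. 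Combining these identifications gives the desired isomorphism.

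The main obstacle lies in the middle step: one must establish that the Weil-\'etale cohomology of ${}_{l^{i}}(\mathcal{A}^{0})$ agrees with ordinary \'etale (respectively fppf) cohomology, and that the multiplication-by-$l^{i}$ short exact sequence induces an honest long exact sequence in Weil-\'etale cohomology. The case $l = p$ is delicate because $\mathcal{A}^{0}$ has non-trivial non-\'etale $p$-torsion and one must work inside the fppf-refined Weil-\'etale formalism of \cite{Suz19}; ensuring that the Bockstein construction is compatible with the hypercohomology-type definitions used there is the technical heart of the argument.
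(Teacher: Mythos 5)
Your argument for $l \ne p$ is correct and is essentially a hands-on version of what the paper does: the paper packages the Bockstein and the passage to the limit through Jannsen's adic formalism (the functor $\underline{T}_{l}$ and the natural map $C^{\cdot} \tensor \Z_{l} \to R T_{l}(C^{\cdot})[1]$, an isomorphism for complexes with finitely generated cohomology), but the inputs are the same as yours: faithful flatness of $l^{i}$ on $\mathcal{A}^{0}$ via \cite[III, Cor.\ C.9]{Mil06}, agreement of Weil-\'etale and \'etale cohomology for torsion coefficients (which follows cleanly from \eqref{eq: etale and Weil etale exact sequence}, since the $\tensor \Q$ terms vanish), finite generation of $H_{W}^{\ast}(S, \mathcal{A}^{0})$ from Prop.\ \ref{prop: finite generation}, vanishing of the Tate module of a finitely generated group, and finiteness of the groups $H^{n}(S, {}_{l^{i}}(\mathcal{A}^{0}))$ (or, as you arrange it, the Mittag-Leffler property of the subobject system) to make $\invlim_{i}$ exact. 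Your version is slightly more economical in that taking $\invlim$ of the short exact Bockstein sequences only needs ML for the left-hand system of finite groups.

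For $l = p$, however, there is a genuine gap that your closing paragraph flags but does not fill, and the fix you gesture at (an ``fppf-refined Weil-\'etale formalism'') is not what is needed. The Kummer sequence for $p^{i}$ is exact only in the fppf topology, while $R \Gamma_{W}(S, \;\cdot\;)$ as defined here is a functor on $D^{+}(S_{\et})$; so you cannot apply $H_{W}^{\ast}(S, -)$ to that sequence, and $H_{W}^{\ast}(S, {}_{p^{i}}(\mathcal{A}^{0}))$ is not even the object appearing in the statement, which is fppf cohomology. The paper's resolution is to never form a Weil-\'etale Bockstein at the torsion level at all: one applies $R T_{p}$ to the whole complex $R \Gamma_{W}(S, \mathcal{A}^{0})$, notes that $R T_{p}$ kills the uniquely divisible cone of $R \Gamma_{\et}(S, \mathcal{A}^{0}) \to R \Gamma_{W}(S, \mathcal{A}^{0})$, uses smoothness of $\mathcal{A}^{0}$ to identify its \'etale and fppf cohomology, and only then computes $R T_{p}(R \Gamma_{\fppf}(S, \mathcal{A}^{0}))$ as $R \Gamma_{\fppf}(S, R \underline{T}_{p}(\mathcal{A}^{0}))[1]$, where semistability enters to show $R \underline{T}_{p}(\mathcal{A}^{0})$ is concentrated in degree $0$ and equal to the system $\{{}_{p^{i}}(\mathcal{A}^{0})\}$. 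Alternatively, you could repair your own route by pushing the fppf Kummer triangle forward along $\varepsilon \colon S_{\fppf} \to S_{\et}$ and applying the triangulated functor $R \Gamma_{W}(S, -)$ to $R \varepsilon_{\ast}({}_{p^{i}}(\mathcal{A}^{0}))$, whose Weil-\'etale hypercohomology agrees with $H_{\fppf}^{\ast}(S, {}_{p^{i}}(\mathcal{A}^{0}))$ because it is a torsion complex; but as written your middle step does not go through at $p$.
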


For more complete results (without the semistability assumption for the case $l = p$),
see Prop.\ \ref{prop: integral models in derived setting} and \ref{prop: integral models in abelian setting}.

\begin{Ack}
	The authors thank Fabien Trihan for helpful discussions.
\end{Ack}

\begin{Notation}
	Throughout this paper,
	we fix a finite field $\F_{q}$ with $q$ elements of characteristic $p$,
	an algebraic closure $\closure{\F_{q}}$ of $\F_{q}$,
	a proper smooth geometrically connected curve $S$ over $\F_{q}$ with function field $K$,
	and an abelian variety $A$ over $K$ of dimension $d$.
	We denote the rank of the Mordell-Weil group $A(K)$ by $r$.
	Thus
		\[
			d = \dim(A), \quad
			r = \rank(A(K)).
		\]
	
	For a place $v$ of $K$ (i.e.\ a closed point of $S$),
	we denote the residue field of $S$ at $v$ by $k(v)$, its cardinality by $N(v)$,
	the completed local ring by $\Order_{v}$,
	its maximal ideal by $m_{v}$ and its fraction field by $K_{v}$.
	The adele ring of $K$ is denoted by $\adele_{K}$
	and the integral adele ring by $\Order_{\adele_{K}}$.
	For a group scheme $G$ over $\Order_{v}$ and $n \ge 0$,
	we denote the kernel of the reduction map $G(\Order_{v}) \to G(\Order_{v} / m_{v}^{n})$
	by $G(m_{v}^{n})$.
	
	The N\'eron model over $S$ of $A / K$ is denoted by $\mathcal{A}$.
	The open subgroup scheme of $\mathcal{A}$ with connected fibers is denoted by $\mathcal{A}^{0}$
	and the fiber of $\mathcal{A}$ at a closed point $v \in S$ is denoted by $\mathcal{A}_{v}$.
	Let $B$ be the dual abelian variety of $A / K$.
	We have objects $\mathcal{B}$, $\mathcal{B}^{0}$, $\mathcal{B}_{v}$ correspondingly.
	
	For an abelian group $G$, its torsion part is denoted by $G_{\tor}$
	and torsion-free quotient by $G / \tor$.
	The $n$-torsion part (kernel of multiplication by $n$) of $G$ for an integer $n$ is denoted by ${}_{n} G$.
	For a pairing $\varphi$ between finitely generated abelian groups $G$ and $H$
	with values in $\Z$, $\Q$ or $\R$,
	we denote by $\Disc(\varphi)$
	the absolute value of the determinant of the matrix presentation of $\varphi$
	with respect to some (or equivalently, any) $\Z$-bases of $G / \tor$ and $H / \tor$.
	
	If $\mathcal{F}$ is a coherent sheaf on $S$,
	the Euler characteristic of $\mathcal{F}$ is
		\[
				\chi(S, \mathcal{F})
			=
				\sum_{i}
					(-1)^{i} \dim_{\F_{q}} H^{i}(S, \mathcal{F}).
		\]
	If $C^{\cdot}$ is a complex of abelian groups with finitely many finite cohomology groups,
	then we denote
		\[
				\chi(C^{\cdot})
			=
				\prod_{i}
					(\# H^{i}(C^{\cdot}))^{(-1)^{i}}.
		\]
	If $C^{\cdot}$ is a graded object of finite abelian groups with finitely many terms,
	then we denote
		\[
				\chi(C^{\cdot})
			=
				\prod_{i} (\# C^{i})^{(-1)^{i}}.
		\]
	These two pieces of notation are compatible
	by viewing a graded object as a complex with zero differentials.
\end{Notation}

%%%%%%%%%%%%%%%%%%%%%%%%%%%%%%%%%%%%%%%%%%%%%%%%%%%%%%%%%%%%%%%%%%%%%%%%%%%%%

\section{The Birch and Swinnerton-Dyer formulas by Tate and by Kato-Trihan}
\label{sec: The Birch and Swinnerton-Dyer formulas by Tate and by Kato Trihan}

We recall the Birch and Swinnerton-Dyer conjecture for $A / K$
as formulated by Tate \cite{Tat68} and by Kato-Trihan \cite{KT03}.

Let $l \ne p$ be a prime number.
The rational $l$-adic Tate module $V_{l}(A) = T_{l}(A) \tensor \Q$ is dual to
the $l$-adic cohomology $H_{\cont}^{1}(A \times_{K} K^{\sep}, \Q_{l})$
as Galois representations over $K$,
where $K^{\sep}$ is a separable closure of $K$.
It is also dual to the negative Tate twist $V_{l}(B)(-1)$ of $V_{l}(B)$
via the Weil pairing,
where $B$ is the dual abelian variety of $A$ as in Notation at the end of Introduction.
Since $B$ is (non-canonically) isogenous to $A$,
$V_{l}(B)$ is (non-canonically) isomorphic to $V_{l}(A)$.
Summarizing,
	\begin{equation} \label{eq: isomorphisms of l adic representations}
		\begin{split}
			&		H_{\cont}^{1}(A \times_{K} K^{\sep}, \Q_{l})
				=
					\Hom(V_{l}(A), \Q_{l})
			\\
			&	=
					V_{l}(B)(-1)
				\cong
					V_{l}(A)(-1).
		\end{split}
	\end{equation}

For a place $v$ of $K$ where $A$ has good reduction,
we define a polynomial in $t$ by
	\[
			P_{v}(t)
		=
			\det \bigl(
					1 - \varphi_{v} t
				\bigm|
					H_{\cont}^{1}(A \times_{K} K^{\sep}, \Q_{l})
			\bigr),
	\]
where $\varphi_{v}$ is the geometric Frobenius at $v$.
This has $\Z$-coefficients and does not depend on $l$.
Let $U \subset S$ be an open dense subscheme where $A$ has good reduction.
Denote $\Sigma = S \setminus U$.
As in \cite[\S 1.3]{KT03},
for a complex number $s$ with $\Re(s) > 3 / 2$,
we define the $L$-function $L(U, A, s)$ without Euler factors outside $U$ by
	\[
			L(U, A, s)
		=
			\prod_{v \in U}
				P_{v}(N(v)^{-s})^{-1}.
	\]
This is a rational function in $q^{- s}$ and regular at $s = 1$.
In \cite[(1.3)]{Tat68}, this is denoted by $L_{\Sigma}(s)$.

Let $\Lie(\mathcal{A}) / S$ be the Lie algebra (with zero Lie bracket)
of the N\'eron model $\mathcal{A} / S$ (\cite[Exp.\ II, 4.11]{DG70}).
It is a group scheme represented by a locally free sheaf on $S$ of rank $d = \dim(A)$
that is given by the pullback along the zero section $S \into \mathcal{A}$
of the $\Order_{\mathcal{A}}$-linear dual of $\Omega_{\mathcal{A} / S}^{1}$.
We similarly have the Lie algebra $\Lie(A) / K$ of $A / K$.

For each place $v$,
we give $K_{v}$ the normalized Haar measure $\mu_{v}$,
so that $\mu_{v}(\Order_{v}) = 1$.
(The measure $\mu_{v}$ in \cite[\S 1]{Tat68} is not necessarily normalized for all $v$.
Our normalization is just for simplicity.)
The product $\mu = \prod_{v} \mu_{v}$ gives a Haar measure on the adele ring $\adele_{K}$.
In \cite[(1.5)]{Tat68},
the measure $\mu(\adele_{K} / K)$ of the compact quotient $\adele_{K} / K$ is denoted by $|\mu|$.

We fix a non-zero invariant top degree differential form $\omega$ on $A / K$.
As in \cite[\S 1]{Tat68},
$\omega$ and $\mu_{v}$ on $K_{v}$ together
determine a Haar measure on $\Lie(A)(K_{v})$ (still denoted by $\mu_{v}$) for each $v$.
By definition \cite[\S 2.2.1]{Wei82},
this measure is characterized by
	\begin{equation} \label{eq: Haar measure for Lie algebra}
			\mu_{v}(L)
		=
			[\omega(\det(L)) : \Order_{v}]
	\end{equation}
for any full rank $\Order_{v}$-lattice $L$ of $\Lie(A)(K_{v})$,
where $\det(L)$ is the top exterior power of $L$ over $\Order_{v}$,
$\omega(\det(L)) \subset K_{v}$ is the image of $\det(L)$
by $\omega$ viewed as a $K_{v}$-linear isomorphism $\det(\Lie(A)(K_{v})) \isomto K_{v}$,
and the index $[\omega(\det(L)) : \Order_{v}]$ means
$[\Order_{v} : \omega(\det(L))]^{-1}$ in case
$\omega(\det(L))$ does not contain $\Order_{v}$.
One may take this formula as the definition of $\mu_{v}$.
If $\omega$ is replaced by its multiple by a rational function $f \in K^{\times}$,
then $\mu_{v}$ is multiplied by the normalized absolute value $|f|_{v} = N(v)^{-v(f)}$.

We have $\mu_{v}(\Lie(\mathcal{A})(\Order_{v})) = 1$ for almost all $v$.
Hence the product $\mu = \prod_{v} \mu_{v}$ defines
a Haar measure on the adelic points $\Lie(A)(\adele_{K})$.
The measure $\mu_{v}$ on $\Lie(A)(K_{v})$ in turn determines a Haar measure on $A(K_{v})$
such that
	\begin{equation} \label{eq: Haar measure of group and Lie algebra}
			\mu_{v}(\Lie(\mathcal{A})(m_{v}^{n}))
		=
			\mu_{v}(\mathcal{A}(m_{v}^{n})))
	\end{equation}
for all $n \ge 1$.
The measure $\mu_{v}(A(K_{v}))$ is denoted by
$\int_{A(K_{v})} |\omega|_{v} \mu_{v}^{d}$ in \cite[\S 1]{Tat68}.

Assume that $U \subset S$ is large enough
so that $\omega$ gives a nowhere vanishing section of the dual of the line bundle $\det(\Lie(\mathcal{A}))$ over $U$,
in addition that $A$ has good reduction over $U$.
Following \cite[(1.5)]{Tat68}, we define
	\[
			L_{\Sigma}^{\ast}(s)
		=
			L_{\Sigma}^{\ast}(A, s)
		=
				\frac{
					\mu(\adele_{K} / K)^{d}
				}{
					\prod_{v \not\in U}
						\mu_{v}(A(K_{v}))
				}
			\cdot
				L(U, A, s).
	\]
This is independent of the choice of $\omega$ by the product formula.
As shown after loc.\ cit.,
the asymptotic behavior of $L_{\Sigma}^{\ast}(A, s)$ as $s \to 1$ does not depend on $U$ (or $\Sigma$).
Also, following \cite[\S 1.7]{KT03}, we define
	\[
			\vol \Bigl(
				\prod_{v \not\in U} A(K_{v})
			\Bigr)
		=
			\frac{
				\prod_{v \not\in U} \mu_{v}(A(K_{v}))
			}{
				\mu \bigl(
					\Lie(A)(\adele_{K}) / \Lie(A)(K)
				\bigr)
			},
	\]
which is independent of the choice of $\omega$.

Let $\Sha(A)$ be the Tate-Shafarevich group of $A$
and
	\[
			h
		\colon
			A(K) \times B(K)
		\to
			\R
	\]
the N\'eron-Tate height pairing.
We have $\Disc(h) \ne 0$.

Now Tate's formulation of the Birch and Swinnerton-Dyer conjecture is the following.

\begin{Conj}[{\cite[\S 1, (A), (B)]{Tat68}}] \label{conj: Tate BSD}
	The order of zero of $L(U, A, s)$ at $s = 1$ is the Mordell-Weil rank $r$.
	The group $\Sha(A)$ is finite.
	We have
		\[
				\lim_{s \to 1}
					\frac{
						L_{\Sigma}^{\ast}(A, s)
					}{
						(s - 1)^{r}
					}
			=
				\frac{
					\# \Sha(A) \cdot \Disc(h)
				}{
					\# A(K)_{\tor} \cdot \# B(K)_{\tor}
				}.
		\]
\end{Conj}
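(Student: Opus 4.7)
The plan is to derive the three assertions of Conjecture \ref{conj: Tate BSD} from finiteness of $\Sha(A)$ together with Theorem \ref{thm: BSD by Weil etale}, and to attack finiteness of $\Sha(A)$ itself by cohomological methods on the curve $S$. Granting that $\Sha(A)$ is finite, Theorem \ref{thm: BSD by Weil etale} yields the leading term of $L(A, s)$ in the Weil-\'etale normalization
\[
	\chi(H_{W}^{\ast}(S, \mathcal{A}), e)^{-1} \cdot q^{\chi(S, \Lie(\mathcal{A}))} \cdot (\log q)^{r}.
\]
Translating to Tate's normalization $L_{\Sigma}^{\ast}(A, s)$ is then a matter of matching the adelic Haar measure on $\Lie(A)(\adele_{K})$ determined by $\omega$ with the volumes $\mu_{v}(\Lie(\mathcal{A})(\Order_{v}))$: the product formula eliminates the dependence on $\omega$, Riemann-Roch converts $q^{\chi(S, \Lie(\mathcal{A}))}$ into $\mu(\adele_{K}/K)^{d} / \prod_{v} \mu_{v}(\Lie(\mathcal{A})(\Order_{v}))$, and the local identity $\mu_{v}(\Lie(\mathcal{A})(m_{v}^{n})) = \mu_{v}(\mathcal{A}(m_{v}^{n}))$ recalled in this section allows one to replace $\Lie(\mathcal{A})$ by $\mathcal{A}$. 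Combining this reconciliation with the Kato-Trihan formula recovers Conjecture \ref{conj: Tate BSD} conditional on finiteness of $\Sha(A)$.

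The substantive task is therefore finiteness of $\Sha(A)$, which I would attack $\ell$-adically. For each prime $\ell$, the Kummer-type sequence
\[
	0 \to {}_{\ell^{i}} \mathcal{A}^{0} \to \mathcal{A}^{0} \overset{\ell^{i}}{\to} \mathcal{A}^{0} \to 0
\]
on $S_{\et}$ (for $\ell \ne p$) or $S_{\fppf}$ (for $\ell = p$) realizes the $\ell$-primary part of $\Sha(A)$ as a subquotient of $\invlim_{i} H^{1}(S, {}_{\ell^{i}} \mathcal{A}^{0})$. By Artin-Tate-Milne duality between $\mathcal{A}$ and $\mathcal{B}$ on $S$, finiteness of this inverse limit for $\ell \ne p$ is equivalent to the assertion that the Frobenius eigenvalue $1$ on $H^{1}(S_{\closure{\F_{q}}}, T_{\ell}\mathcal{A}^{0}) \tensor \Q_{\ell}$ is semisimple with multiplicity equal to the Mordell-Weil rank $r$. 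The Grothendieck-Lefschetz trace formula for $L(U, A, s)$ ties this nondegeneracy to the order of vanishing of $L(U, A, s)$ at $s = 1$, so the rank equality and the $\ell$-primary finiteness of $\Sha(A)$ fall out simultaneously. For $\ell = p$ one replaces \'etale by flat cohomology of the $p$-divisible group of $\mathcal{A}^{0}$ and argues via Dieudonn\'e or syntomic methods, using Theorem \ref{thm: BSD by Weil etale} and the integral-model statement of the second theorem of the introduction to reduce to a statement about crystalline Frobenius.

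The main obstacle is exactly this finiteness of $\Sha(A)$ in positive characteristic, which is an open problem. Via Artin-Tate it is equivalent to the Tate conjecture for divisors on a proper smooth surface mapping to $S$ whose generic fibre has Jacobian isogenous to $A$, and this Tate conjecture is unconditionally known only in special geometric situations: when $A$ is isotrivial, when the associated surface is dominated by a product of curves, or in sporadic Hilbert-Blumenthal-type families. A fully general proof of Conjecture \ref{conj: Tate BSD} would require either a breakthrough on the Tate conjecture for surfaces over $\F_{q}$, or a new technique directly bounding the $p$-primary component of $\Sha(A)$. In the absence of such input, the strongest result achievable by the Weil-\'etale approach of this paper is the conditional implication of Theorem \ref{thm: BSD by Weil etale}.
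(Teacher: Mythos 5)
You have correctly sized up the situation: the statement is stated in the paper as a conjecture (Tate's formulation of BSD, quoted from \cite{Tat68}), the paper offers no proof of it, and no unconditional proof exists, so the only provable content is the conditional formula granted $\#\Sha(A)<\infty$ plus an honest acknowledgement that finiteness of $\Sha(A)$ is open. Your conditional derivation is essentially the paper's own route run in the same order: the identity $\mu\bigl(\Lie(A)(\adele_{K})/\Lie(A)(K)\bigr)=\mu(\adele_{K}/K)^{d}$ via Riemann--Roch and the computation of $\mu_{v}(\Lie(\mathcal{A})(\Order_{v}))$ is Prop.~\ref{prop: comparison of measure terms of Tate and Kato Trihan} (through Prop.~\ref{prop: measure of Lie ideles and Euler char}), the local identity $\mu_{v}(\Lie(\mathcal{A})(m_{v}^{n}))=\mu_{v}(\mathcal{A}(m_{v}^{n}))$ is exactly how the paper passes between the Lie algebra and the group, and the final appeal to \cite[Chap.~I, Thm.]{KT03} is the same external input the paper uses; this is precisely the equivalence of Conj.~\ref{conj: Tate BSD} and Conj.~\ref{conj: Kato Trihan BSD} established in Section~3, continued in Cor.~\ref{cor: BSD with bad Euler factors}. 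One caveat on logical order: within this paper you should not present Thm.~\ref{thm: BSD by Weil etale} as the source of the conditional Tate formula, since that theorem is itself deduced from Kato--Trihan together with the very comparison you are carrying out; citing \cite{KT03} directly, as you also do, avoids the circularity, so keep that as the primary reference. Your closing $\ell$-adic sketch toward finiteness of $\Sha(A)$ (Frobenius semisimplicity and multiplicity of the eigenvalue $1$, Artin--Tate, the Tate conjecture for an associated surface) is consistent with the known reductions but, as you yourself note, is not a proof and is not attempted in the paper; it would also be worth recording that by Kato--Trihan finiteness of a single $\ell$-primary component of $\Sha(A)$ already suffices for the conditional statement. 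In short, you treat the statement with the correct status, and the provable portion of your argument coincides with the paper's own reduction.
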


On the other hand, Kato-Trihan's formulation is the following.

\begin{Conj}[{\cite[1.3.1, 1.4.1, 1.8.1]{KT03}}] \label{conj: Kato Trihan BSD}
	The order of zero of $L(U, A, s)$ at $s = 1$ is $r$.
	The group $\Sha(A)$ is finite.
	We have
		\[
				\lim_{s \to 1}
					\frac{
						L(U, A, s)
					}{
						(s - 1)^{r}
					}
			=
					\frac{
						\# \Sha(A) \cdot \Disc(h)
					}{
						\# A(K)_{\tor} \cdot \# B(K)_{\tor}
					}
				\cdot
					\vol \Bigl(
						\prod_{v \not\in U} A(K_{v})
					\Bigr).
		\]
\end{Conj}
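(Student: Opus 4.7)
The plan is to deduce this conjecture from Tate's formulation (Conjecture \ref{conj: Tate BSD}) by showing the two are equivalent; under the finiteness assumption on $\Sha(A)$, Conjecture \ref{conj: Tate BSD} is established by Kato-Trihan \cite{KT03}, so equivalence suffices. The first two assertions, namely that $\ord_{s=1} L(U, A, s) = r$ and that $\Sha(A)$ is finite, are literally the same in the two conjectures, so the content of the proof lies entirely in matching the two leading-coefficient identities.

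By the definition of $L_{\Sigma}^{\ast}$ recalled in the excerpt,
    \[
        L(U, A, s) = L_{\Sigma}^{\ast}(A, s) \cdot \frac{\prod_{v \not\in U} \mu_{v}(A(K_{v}))}{\mu(\adele_{K}/K)^{d}}.
    \]
Substituting this into the Kato-Trihan identity and using the definition
    \[
        \vol \Bigl( \prod_{v \notin U} A(K_{v}) \Bigr) = \frac{\prod_{v \notin U} \mu_{v}(A(K_{v}))}{\mu(\Lie(A)(\adele_{K})/\Lie(A)(K))},
    \]
the $\prod_{v \notin U} \mu_{v}(A(K_{v}))$ factors cancel, and matching the two leading-coefficient identities is reduced to the single Haar-measure identity
    \[
        \mu\bigl(\Lie(A)(\adele_{K})/\Lie(A)(K)\bigr) = \mu(\adele_{K}/K)^{d}.
    \]

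To establish this identity, I would choose any $K$-basis $e_{1}, \dots, e_{d}$ of the $d$-dimensional $K$-vector space $\Lie(A)$ and set $f = \omega(e_{1} \wedge \cdots \wedge e_{d}) \in K^{\times}$. The resulting isomorphism $\Lie(A) \cong K^{d}$ extends to the adeles, and the characterization \eqref{eq: Haar measure for Lie algebra} applied to the lattice $\bigoplus_{i} \Order_{v} e_{i}$ shows that $\mu_{v}$ on $\Lie(A)(K_{v})$ is transported to $|f|_{v}$ times the standard product of normalized Haar measures on $K_{v}^{d}$. Taking the restricted product over $v$ and passing to the quotient by the diagonal $K^{d}$, the overall scaling factor becomes $\prod_{v} |f|_{v}$, which equals $1$ by the product formula since $f \in K^{\times}$. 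This yields the desired equality.

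The main point requiring care is the $\omega$-dependence of the local measures appearing in both formulations; this dependence does not cancel place-by-place but only globally via the product formula, so every step of the reduction has to be performed for adelic rather than purely local objects, and one must verify that the various normalization conventions (notably $\mu_{v}(\Order_{v}) = 1$) are applied consistently on both sides. Once this bookkeeping is complete, the equivalence of the two conjectures is formal, and Conjecture \ref{conj: Kato Trihan BSD} follows from the proof of Conjecture \ref{conj: Tate BSD} in \cite{KT03}.
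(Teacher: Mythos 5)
Your reduction is correct and follows the same first step as the paper's Section~\ref{sec: Comparison of the two formulas}: after cancelling $\prod_{v \notin U}\mu_{v}(A(K_{v}))$ between the definitions of $L_{\Sigma}^{\ast}$ and of $\vol\bigl(\prod_{v\notin U}A(K_{v})\bigr)$, everything comes down to the single identity $\mu\bigl(\Lie(A)(\adele_{K})/\Lie(A)(K)\bigr)=\mu(\adele_{K}/K)^{d}$, which is exactly Prop.~\ref{prop: comparison of measure terms of Tate and Kato Trihan}. Where you genuinely diverge is in the proof of that identity: you pick a $K$-basis of $\Lie(A)$, use \eqref{eq: Haar measure for Lie algebra} to see that the basis isomorphism carries $\mu_{v}$ to $|f|_{v}$ times the standard measure on $K_{v}^{d}$, and conclude by the product formula; this is correct and more elementary (the only points to make explicit are that the basis lattice equals $\Lie(\mathcal{A})(\Order_{v})$ for almost all $v$, so the isomorphism is compatible with the restricted products, and that $\mu(\adele_{K}^{d}/K^{d})=\mu(\adele_{K}/K)^{d}$). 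The paper instead proves the identity via two intermediate computations, $\mu\bigl(\Lie(A)(\adele_{K})/\Lie(A)(K)\bigr)=\mu\bigl(\Lie(\mathcal{A})(\Order_{\adele_{K}})\bigr)\cdot q^{-\chi(S,\Lie(\mathcal{A}))}$ (Prop.~\ref{prop: measure of Lie ideles and Euler char}) and $\mu\bigl(\Lie(\mathcal{A})(\Order_{\adele_{K}})\bigr)=q^{\deg(\det(\Lie(\mathcal{A})))}$, and then the Riemann--Roch theorem \eqref{eq: Riemann Roch}. That detour is intentional: Prop.~\ref{prop: measure of Lie ideles and Euler char} is reused in Prop.~\ref{prop: volume term in terms of Euler char} to rewrite the volume term as $c(A)\cdot q^{\chi(S,\Lie(\mathcal{A}))}\cdot\prod_{v\notin U}P_{v}(N(v)^{-1})$, which is the ingredient needed for the Weil-\'etale formula in Thm.~\ref{thm: BSD by Weil etale}; your shortcut establishes the equivalence of Conj.~\ref{conj: Tate BSD} and \ref{conj: Kato Trihan BSD} but does not produce that later input. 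One small attribution remark: \cite{KT03} proves precisely the formulation stated here (assuming $\Sha(A)$ finite), so it can be cited directly; your framing, in which Tate's formulation is what Kato--Trihan establish and the present statement is then deduced, reverses the actual logical direction, though the equivalence argument itself is symmetric and your mathematics is unaffected.
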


%%%%%%%%%%%%%%%%%%%%%%%%%%%%%%%%%%%%%%%%%%%%%%%%%%%%%%%%%%%%%%%%%%%%%%%%%%%%%

\section{Comparison of the two formulas}
\label{sec: Comparison of the two formulas}

In this section,
we show, for the convenience of the reader,
that Conj.\ \ref{conj: Tate BSD} and \ref{conj: Kato Trihan BSD} are equivalent.
It suffices to show (without hypothesis on the order of zero of $L(U, A, s)$ or finiteness of $\Sha(A)$) the following.

\begin{Prop} \label{prop: comparison of measure terms of Tate and Kato Trihan}
	\[
			\mu \left(
				\frac{
					\Lie(A)(\adele_{K})
				}{
					\Lie(A)(K)
				}
			\right)
		=
			\mu(\adele_{K} / K)^{d}.
	\]
\end{Prop}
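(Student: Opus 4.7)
The strategy is to reduce the statement to the product formula by trivializing $\Lie(A)$ as a $K$-vector space. Choose a $K$-basis $e_1, \ldots, e_d$ of the $d$-dimensional $K$-vector space $\Lie(A)(K)$. This gives a $K$-linear isomorphism $\Lie(A) \isomto K^{d}$, inducing an isomorphism of locally compact groups $\Lie(A)(\adele_{K}) \isomto \adele_{K}^{d}$ that carries $\Lie(A)(K)$ onto $K^{d}$. Let $a = \omega(e_{1} \wedge \cdots \wedge e_{d}) \in K^{\times}$, where $\omega$ is viewed as the $K$-linear isomorphism $\det \Lie(A) \isomto K$ determined by the chosen top degree invariant form.

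Next, I would compare, at each place $v$, the measure $\mu_{v}$ on $\Lie(A)(K_{v})$ defined via $\omega$ with the normalized Haar measure $\mu_{v}^{d}$ on $K_{v}^{d}$ that assigns volume $1$ to $\Order_{v}^{d}$. Consider the lattice $L_{v} = \Order_{v} e_{1} \oplus \cdots \oplus \Order_{v} e_{d}$; its image under the chosen isomorphism is $\Order_{v}^{d}$, which has $\mu_{v}^{d}$-volume $1$. On the other hand, $\det(L_{v}) = \Order_{v} \cdot (e_{1} \wedge \cdots \wedge e_{d})$, so $\omega(\det L_{v}) = a \Order_{v}$, and the defining formula \eqref{eq: Haar measure for Lie algebra} yields $\mu_{v}(L_{v}) = [a \Order_{v} : \Order_{v}] = |a|_{v}$. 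Hence, under the isomorphism $\Lie(A)(K_{v}) \isomto K_{v}^{d}$, the measure $\mu_{v}$ corresponds to $|a|_{v} \cdot \mu_{v}^{d}$.

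Now I would take the product over all $v$. Since $\mu_{v}(\Lie(\mathcal{A})(\Order_{v})) = 1$ for almost all $v$ and $L_{v} = \Lie(\mathcal{A})(\Order_{v})$ for almost all $v$ (the basis $e_{1}, \ldots, e_{d}$ extends to a trivializing basis of $\Lie(\mathcal{A})$ over some dense open subscheme), the two product measures are each well-defined. Their ratio on $\Lie(A)(\adele_{K}) \cong \adele_{K}^{d}$ equals $\prod_{v} |a|_{v} = 1$ by the product formula for $K$. Therefore the measure $\mu$ on $\Lie(A)(\adele_{K})$ coincides with the $d$-fold product of the adelic measure $\mu$ on $\adele_{K}$ under our identification, and passing to the quotient by $\Lie(A)(K) \leftrightarrow K^{d}$ gives
	\[
			\mu \left(
				\frac{\Lie(A)(\adele_{K})}{\Lie(A)(K)}
			\right)
		=
			\mu(\adele_{K} / K)^{d}.
	\]

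There is no real obstacle; the only thing to be careful about is the bookkeeping for the local normalizations and the verification that $L_{v}$ agrees with $\Lie(\mathcal{A})(\Order_{v})$ for almost all $v$, so that the product measure on the left really matches $\mu$ as globally defined. Once those compatibilities are in place, the statement is exactly the content of the product formula applied to the single element $a = \omega(e_{1} \wedge \cdots \wedge e_{d})$.
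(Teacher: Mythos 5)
Your argument is correct, but it is a genuinely different route from the paper's. You trivialize $\Lie(A)$ by a $K$-basis, observe that the local measures $\mu_{v}$ differ from the normalized product measures on $K_{v}^{d}$ by the factors $|a|_{v}$ with $a = \omega(e_{1} \wedge \cdots \wedge e_{d}) \in K^{\times}$, and conclude by the product formula $\prod_{v} |a|_{v} = 1$; this is the classical Tamagawa-measure argument showing independence of the gauge form, and it works verbatim over any global field. The paper instead computes both sides in terms of coherent Euler characteristics: it first proves (Prop.\ \ref{prop: measure of Lie ideles and Euler char}) that $\mu(\Lie(A)(\adele_{K})/\Lie(A)(K)) = \mu(\Lie(\mathcal{A})(\Order_{\adele_{K}})) \cdot q^{-\chi(S, \Lie(\mathcal{A}))}$ via Zariski excision, then evaluates $\mu(\Lie(\mathcal{A})(\Order_{\adele_{K}})) = q^{\deg(\det(\Lie(\mathcal{A})))}$, applies the same to $\Order_{S}$ to get $\mu(\adele_{K}/K) = q^{-\chi(S, \Order_{S})}$, and finishes with the Riemann--Roch identity $\chi(S, \Lie(\mathcal{A})) = d\,\chi(S, \Order_{S}) + \deg(\det(\Lie(\mathcal{A})))$. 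The two approaches are of comparable depth (the product formula is the degree-zero statement underlying Riemann--Roch), but the paper's detour is not gratuitous: the intermediate Prop.\ \ref{prop: measure of Lie ideles and Euler char} is reused in Section \ref{sec: Bad Euler factors} to express the volume term of Kato--Trihan as $c(A) \cdot q^{\chi(S, \Lie(\mathcal{A}))} \cdot \prod P_{v}(N(v)^{-1})$, which is where $q^{\chi(S,\Lie(\mathcal{A}))}$ enters the main theorem. Your proof, being self-contained, would still require that proposition to be established separately. The only points needing care in your write-up --- that $L_{v} = \Lie(\mathcal{A})(\Order_{v})$ for almost all $v$ so the restricted products and product measures are compared legitimately, and the sign conventions in $[\omega(\det L) : \Order_{v}] = |a|_{v}$ --- are ones you have addressed.
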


We are going to reduce this to the Riemann-Roch theorem
for the vector bundle $\Lie(\mathcal{A})$ over $S$.
First we relate the left-hand side to the Euler characteristic $\chi(S, \Lie(\mathcal{A}))$.
This step will also be used in the next section
as one of the keys for the proof of Thm.\ \ref{thm: BSD by Weil etale}.

\begin{Prop} \label{prop: measure of Lie ideles and Euler char}
	\[
			\mu \left(
				\frac{
					\Lie(A)(\adele_{K})
				}{
					\Lie(A)(K)
				}
			\right)
		=
				\mu \bigl(
					\Lie(\mathcal{A})(\Order_{\adele_{K}})
				\bigr)
			\cdot
				q^{- \chi(S, \Lie(\mathcal{A}))}.
	\]
\end{Prop}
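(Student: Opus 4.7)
The plan is to reduce the statement to the standard adelic description of the coherent cohomology of a vector bundle on a curve. Let $\mathcal{F} = \Lie(\mathcal{A})$, viewed as a locally free sheaf on $S$ of rank $d$. By the Weil--Serre presentation of coherent cohomology, one has canonical identifications inside the locally compact group $\mathcal{F}(\adele_{K}) = \Lie(A)(\adele_{K})$:
\[
    H^{0}(S, \mathcal{F}) = \mathcal{F}(K) \cap \mathcal{F}(\Order_{\adele_{K}}),
    \qquad
    H^{1}(S, \mathcal{F}) = \mathcal{F}(\adele_{K}) \bigm/ \bigl(\mathcal{F}(K) + \mathcal{F}(\Order_{\adele_{K}})\bigr).
\]
In particular, $\mathcal{F}(\Order_{\adele_{K}})$ is a compact open subgroup, $\mathcal{F}(K)$ is a discrete cocompact subgroup, and both $H^{0}$ and $H^{1}$ are finite $\F_{q}$-vector spaces of orders $q^{h^{0}}$ and $q^{h^{1}}$ respectively.

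From these identifications one obtains two short exact sequences of locally compact abelian groups, the second obtained by pushing $\mathcal{F}(\Order_{\adele_{K}})$ into the quotient by $\mathcal{F}(K)$:
\[
    0 \to H^{0}(S, \mathcal{F}) \to \mathcal{F}(\Order_{\adele_{K}}) \to \frac{\mathcal{F}(\Order_{\adele_{K}}) + \mathcal{F}(K)}{\mathcal{F}(K)} \to 0,
\]
\[
    0 \to \frac{\mathcal{F}(\Order_{\adele_{K}}) + \mathcal{F}(K)}{\mathcal{F}(K)} \to \frac{\mathcal{F}(\adele_{K})}{\mathcal{F}(K)} \to H^{1}(S, \mathcal{F}) \to 0.
\]
All middle terms are compact, and the outer terms of each sequence are finite. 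Applying the product Haar measure $\mu$ and the usual rules for compatible measures under short exact sequences with finite kernel or cokernel, the first sequence gives the measure of the common middle group as $\mu(\mathcal{F}(\Order_{\adele_{K}})) / \# H^{0}(S, \mathcal{F})$, while the second then yields
\[
    \mu\!\left(\frac{\mathcal{F}(\adele_{K})}{\mathcal{F}(K)}\right)
    = \# H^{1}(S, \mathcal{F}) \cdot \frac{\mu(\mathcal{F}(\Order_{\adele_{K}}))}{\# H^{0}(S, \mathcal{F})}
    = q^{h^{1} - h^{0}} \mu(\mathcal{F}(\Order_{\adele_{K}})) = q^{-\chi(S, \mathcal{F})} \mu(\mathcal{F}(\Order_{\adele_{K}})),
\]
which is the desired identity.

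The main obstacle is really only bookkeeping: one must check that the Haar measure on $\mathcal{F}(\adele_{K})$ defined place-by-place via $\omega$ and $\mu_{v}$, together with the counting measure on the discrete subgroup $\mathcal{F}(K)$, induces the quotient measure compatibly with the subquotient calculations above. Since $\omega$ is a rational top form on $\Lie(A)$ and the local formula \eqref{eq: Haar measure for Lie algebra} gives $\mu_{v}(\mathcal{F}(\Order_{v})) = 1$ at almost all $v$, the restricted product $\prod_{v} \mu_{v}(\mathcal{F}(\Order_{v}))$ converges and both sides of the asserted identity are independent of the choice of $\omega$ (a factor $|f|_{v}$ at each $v$ in the left-hand side is absorbed by the product formula, and on the right-hand side $\chi(S, \mathcal{F})$ does not depend on $\omega$ at all). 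No further input is needed beyond the adelic expression for $H^{i}(S, \mathcal{F})$.
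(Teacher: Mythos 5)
Your proposal is correct and follows essentially the same route as the paper: identify $H^{0}(S,\Lie(\mathcal{A}))$ and $H^{1}(S,\Lie(\mathcal{A}))$ with $\Lie(A)(K)\cap\Lie(\mathcal{A})(\Order_{\adele_{K}})$ and $\Lie(A)(\adele_{K})/(\Lie(A)(K)+\Lie(\mathcal{A})(\Order_{\adele_{K}}))$, and then do the Haar-measure bookkeeping on the resulting exact sequence with compact middle term and finite outer terms. The only difference is that you quote the classical Weil--Serre adelic description of coherent cohomology, whereas the paper derives the same identification from the Zariski excision/localization sequences and approximation; this is a difference of citation versus short self-contained derivation, not of method.
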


\begin{proof}
	Let $\Order_{(v)}$ be the (Zariski) local ring of $S$ at $v$.
	The excision and localization sequences for Zariski cohomology give long exact sequences
		\begin{gather*}
				\cdots
			\to
				\bigoplus_{v}
					H_{v}^{n}(\Order_{(v)}, \Lie(\mathcal{A}))
			\to
				H^{n}(S, \Lie(\mathcal{A}))
			\to
				H^{n}(K, \Lie(A))
			\to
				\cdots,
			\\
				\cdots
			\to
				H_{v}^{n}(\Order_{(v)}, \Lie(\mathcal{A}))
			\to
				H^{n}(\Order_{(v)}, \Lie(\mathcal{A}))
			\to
				H^{n}(K, \Lie(A))
			\to
				\cdots
		\end{gather*}
	(for each place $v$ of $K$ in the latter sequence),
	where $H_{v}^{n}$ denotes cohomology with closed support.
	The (Zariski) cohomology groups
	$H^{n}(K, \Lie(A))$ and
	$H^{n}(\Order_{(v)}, \Lie(\mathcal{A}))$ are zero for $n \ge 1$.
	Hence the finite groups $H^{0}(S, \Lie(\mathcal{A}))$ and $H^{1}(S, \Lie(\mathcal{A}))$
	are given by the kernel and the cokernel, respectively, of the natural homomorphism
		\[
				\Lie(A)(K)
			\to
				\bigoplus_{v}
					\frac{
						\Lie(A)(K)
					}{
						\Lie(\mathcal{A})(\Order_{(v)})
					}.
		\]
	Each summand of the right-hand side is isomorphic to
	$\Lie(A)(K_{v}) / \Lie(\mathcal{A})(\Order_{v})$
	by approximation.
	Therefore the right-hand side (the whole direct sum) is isomorphic to
	$\Lie(A)(\adele_{K}) / \Lie(\mathcal{A})(\Order_{\adele_{K}})$.
	In other words, we have a natural exact sequence
		\[
				0
			\to
				\frac{
					\Lie(\mathcal{A})(\Order_{\adele_{K}})
				}{
					H^{0}(S, \Lie(\mathcal{A}))
				}
			\to
				\frac{
					\Lie(A)(\adele_{K})
				}{
					\Lie(A)(K)
				}
			\to
				H^{1}(S, \Lie(\mathcal{A}))
			\to
				0.
		\]
	Thus
		\begin{align*}
					\mu \left(
						\frac{
							\Lie(A)(\adele_{K})
						}{
							\Lie(A)(K)
						}
					\right)
			&	=
						\mu \left(
							\frac{
								\Lie(\mathcal{A})(\Order_{\adele_{K}})
							}{
								H^{0}(S, \Lie(\mathcal{A}))
							}
						\right)
					\cdot
						\# H^{1}(S, \Lie(\mathcal{A}))
			\\
			&	=
						\mu \bigl(
							\Lie(\mathcal{A})(\Order_{\adele_{K}})
						\bigr)
					\cdot
						\frac{
							\# H^{1}(S, \Lie(\mathcal{A}))
						}{
							\# H^{0}(S, \Lie(\mathcal{A}))
						}
			\\
			&	=
						\mu \bigl(
							\Lie(\mathcal{A})(\Order_{\adele_{K}})
						\bigr)
					\cdot
						q^{- \chi(S, \Lie(\mathcal{A}))}.
		\end{align*}
\end{proof}

The above proposition is an intermediate step,
as the term
	$
		\mu \bigl(
			\Lie(\mathcal{A})(\Order_{\adele_{K}})
		\bigr)
	$
can be more explicitly calculated as follows.

\begin{Prop}
	\[
			\mu \bigl(
				\Lie(\mathcal{A})(\Order_{\adele_{K}})
			\bigr)
		=
			q^{\deg(\det(\Lie(\mathcal{A})))}.
	\]
\end{Prop}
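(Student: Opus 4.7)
The plan is to evaluate each local factor $\mu_{v}(\Lie(\mathcal{A})(\Order_{v}))$ by the defining formula \eqref{eq: Haar measure for Lie algebra} and then to package the local contributions into a global degree on $S$ via the degree of the divisor of $\omega$ as a rational section of the invertible sheaf $(\det \Lie(\mathcal{A}))^{\vee}$.

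First I would observe that the chosen invariant top form $\omega$ on $A / K$, pulled back along the zero section, gives a non-zero element of $(\det \Lie(A))^{\vee}$ at the generic point, and therefore a non-zero rational section of the line bundle $(\det \Lie(\mathcal{A}))^{\vee}$ on $S$. For each place $v$ write $a_{v} = \mathrm{ord}_{v}(\omega)$ for the order of this rational section at $v$; then $a_{v} = 0$ for almost all $v$. Taking an $\Order_{v}$-basis of $\det \Lie(\mathcal{A})(\Order_{v})$ dual to a local generator of $(\det \Lie(\mathcal{A}))^{\vee}$ at $v$, the linear functional $\omega$ sends this basis vector to a uniformizer power with valuation $a_{v}$, so \eqref{eq: Haar measure for Lie algebra} yields
$$\mu_{v}(\Lie(\mathcal{A})(\Order_{v})) = [\omega(\det \Lie(\mathcal{A})(\Order_{v})) : \Order_{v}] = N(v)^{-a_{v}}.$$

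Next I would form the product over all places:
$$\mu(\Lie(\mathcal{A})(\Order_{\adele_{K}})) = \prod_{v} N(v)^{-a_{v}} = q^{-\sum_{v} a_{v} \deg(v)} = q^{-\deg \mathrm{div}(\omega)},$$
where $\mathrm{div}(\omega)$ is computed in $(\det \Lie(\mathcal{A}))^{\vee}$. Since $\omega$ is a non-zero rational section of $(\det \Lie(\mathcal{A}))^{\vee}$, the degree of its divisor equals $\deg((\det \Lie(\mathcal{A}))^{\vee}) = -\deg(\det \Lie(\mathcal{A}))$, which gives the claimed identity. The answer is independent of the choice of $\omega$ by the product formula $\prod_{v} |f|_{v} = 1$, as is also manifest from the fact that $\deg(\det \Lie(\mathcal{A}))$ is intrinsic to $S$ and $\mathcal{A}$.

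There is no serious obstacle; the only point to be careful about is the sign convention, namely that $\omega$ is a differential form rather than a vector field and hence corresponds to a section of the dual line bundle $(\det \Lie(\mathcal{A}))^{\vee}$. This is exactly what produces the sign flip turning $-\deg(\det \Lie(\mathcal{A}))^{\vee}$ into $+\deg(\det \Lie(\mathcal{A}))$ in the exponent.
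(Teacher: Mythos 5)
Your proposal is correct and follows the paper's own argument essentially verbatim: evaluate each local factor as $N(v)^{-v(\omega)}$ via \eqref{eq: Haar measure for Lie algebra}, where $v(\omega)$ is the order at $v$ of $\omega$ viewed as a rational section of the dual of $\det(\Lie(\mathcal{A}))$, and then use that the degree of the divisor of that section is $-\deg(\det(\Lie(\mathcal{A})))$. The extra remarks on the sign convention and the product formula are consistent with the paper and do not change the argument.
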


\begin{proof}
	For any $v$, let $v(\omega) \in \Z$ be the order of zero at $v$
	of the rational section $\omega$ of the dual of the line bundle $\det(\Lie(\mathcal{A}))$ over $S$.
	Then $\mu_{v}(\Lie(\mathcal{A})(\Order_{v})) = N(v)^{- v(\omega)}$
	by \eqref{eq: Haar measure for Lie algebra}.
	We have $- \sum_{v} [k(v) : \F_{q}] v(\omega) = \deg(\det(\Lie(\mathcal{A})))$.
	This gives the result.
\end{proof}

\begin{proof}[Proof of Prop.\ \ref{prop: comparison of measure terms of Tate and Kato Trihan}]
	By the previous two propositions, we have
		\[
				\mu \left(
					\frac{
						\Lie(A)(\adele_{K})
					}{
						\Lie(A)(K)
					}
				\right)
			=
				q^{\deg(\det(\Lie(\mathcal{A}))) - \chi(S, \Lie(\mathcal{A}))}.
		\]
	The same calculations, applied to the structure sheaf $\Order_{S}$ instead of $\Lie(\mathcal{A})$,
	show that
		\[
				\mu(\adele_{K} / K)
			=
					\mu(\Order_{\adele_{K}})
				\cdot
					q^{- \chi(S, \Order_{S})}
			=
				q^{- \chi(S, \Order_{S})}.
		\]
	Therefore the result follows from the Riemann-Roch theorem
		\begin{equation} \label{eq: Riemann Roch}
				\chi(S, \Lie(\mathcal{A}))
			=
					d \cdot \chi(S, \Order_{S})
				+
					\deg(\det(\Lie(\mathcal{A}))).
		\end{equation}
\end{proof}

%%%%%%%%%%%%%%%%%%%%%%%%%%%%%%%%%%%%%%%%%%%%%%%%%%%%%%%%%%%%%%%%%%%%%%%%%%%%%

\section{Bad Euler factors}
\label{sec: Bad Euler factors}

Using Prop.\ \ref{prop: measure of Lie ideles and Euler char} above,
we will rewrite the Birch and Swinnerton-Dyer formula in a form including bad Euler factors
and $\chi(S, \Lie(\mathcal{A}))$ without terms defined by Haar measures.

As in the previous section,
let $l \ne p$ be a prime number.
For any place $v$ of $K$ where $A$ may have good or bad reduction,
we define a polynomial in $t$ by
	\[
			P_{v}(t)
		=
			\det \bigl(
					1 - \varphi_{v} t
				\bigm|
					H_{\cont}^{1}(A \times_{K} K^{\sep}, \Q_{l})^{I_{v}}
			\bigr),
	\]
where $I_{v}$ is the inertial group at $v$.
We define the completed $L$-function by
	\[
			L(A, s)
		=
				L(U, A, s)
			\cdot
				\prod_{v \not\in U}
					P_{v}(N(v)^{-s})^{-1}
		=
			\prod_{v}
				P_{v}(N(v)^{-s})^{-1},
	\]
where the latter product is over all places $v$.
Recall that $d = \dim(A)$.

\begin{Prop} \label{prop: special value of Euler factor}
	For any place $v$,
	the polynomial $P_{v}(t)$ has $\Z$-coefficients and does not depend on $l \ne p$.
	We have
		\[
				P_{v}(N(v)^{-1})
			=
				\frac{
					\# \mathcal{A}^{0}(k(v))
				}{
					N(v)^{d}
				}.
		\]
\end{Prop}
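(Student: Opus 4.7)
The plan is to evaluate $P_{v}(t)$ at $t = N(v)^{-1}$ by reducing to the Tate module of $\mathcal{B}^{0}_{v}$ via Grothendieck's description of inertia invariants, and then expanding via the Chevalley decomposition of $\mathcal{B}^{0}_{v}$.

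Using the canonical identification $H_{\cont}^{1}(A \times_{K} K^{\sep}, \Q_{l}) = V_{l}(B)(-1)$ from \eqref{eq: isomorphisms of l adic representations}, I would first rewrite
\[
	P_{v}(t)
		= \det\bigl(1 - \varphi_{v} t \bigm| V_{l}(B)^{I_{v}}(-1)\bigr)
		= \det\bigl(1 - N(v)\, \varphi_{v} t \bigm| V_{l}(B)^{I_{v}}\bigr),
\]
so that $P_{v}(N(v)^{-1}) = \det(1 - \varphi_{v} \mid V_{l}(B)^{I_{v}})$. By Grothendieck's theorem (SGA 7, Exp.\ IX), for $l \neq p$ there is a natural isomorphism $T_{l}(B)^{I_{v}} \cong T_{l}(\mathcal{B}^{0}_{v})$ of $\varphi_{v}$-modules, which turns the previous quantity into $\det(1 - \varphi_{v} \mid V_{l}(\mathcal{B}^{0}_{v}))$.

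Next I would apply the Chevalley decomposition: $\mathcal{B}^{0}_{v}$ is an extension of an abelian variety $C$ by a connected linear group, itself an extension of a torus $T$ by a unipotent group $U$. Since $T_{l}(U) = 0$ for $l \neq p$, the Tate module sits in a short exact sequence $0 \to T_{l}(T) \to T_{l}(\mathcal{B}^{0}_{v}) \to T_{l}(C) \to 0$ of $\varphi_{v}$-modules, so the determinant factors. For the abelian part, $\det(1 - \varphi_{v} \mid V_{l}(C)) = N(v)^{-\dim C}\,\# C(k(v))$, by combining the Weil formula $\# C(k(v)) = \det(1 - F \mid T_{l}(C))$ with $\det(F \mid T_{l}(C)) = N(v)^{\dim C}$ and the fact that $\varphi_{v}$ acts as $F^{-1}$. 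For the toric part, using $T_{l}(T) = X_{*}(T) \tensor \Z_{l}(1)$ together with the classical formula $\# T(k(v)) = \det(N(v) - \mathrm{Frob} \mid X^{*}(T))$, a direct computation yields $\det(1 - \varphi_{v} \mid V_{l}(T)) = N(v)^{-\dim T}\,\# T(k(v))$. Combined with $\# U(k(v)) = N(v)^{\dim U}$ (since a connected unipotent group over a finite field is birational to affine space) and $\# \mathcal{B}^{0}_{v}(k(v)) = \# C(k(v)) \cdot \# T(k(v)) \cdot \# U(k(v))$ (Lang's theorem), the pieces assemble to $P_{v}(N(v)^{-1}) = N(v)^{-d}\,\# \mathcal{B}^{0}_{v}(k(v))$.

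Finally, I would replace $\mathcal{B}^{0}_{v}$ by $\mathcal{A}^{0}_{v}$ using the autoduality of N\'eron models: the abelian and toric parts of $\mathcal{A}^{0}_{v}$ and $\mathcal{B}^{0}_{v}$ are mutually dual (as abelian varieties and as tori, respectively), and thus have the same number of $k(v)$-points, while their unipotent ranks agree; hence $\# \mathcal{A}^{0}_{v}(k(v)) = \# \mathcal{B}^{0}_{v}(k(v))$. Integrality and $l$-independence of $P_{v}(t)$ follow from the same decomposition: the characteristic polynomial of geometric Frobenius on $V_{l}(C)$ is (up to a twist) the classical Weil polynomial, with integer coefficients independent of $l$, while the one for $V_{l}(T)$ is determined by the Frobenius action on the integer lattice $X^{*}(T)$. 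The step most sensitive to the bad reduction geometry is the autoduality input giving $\# \mathcal{A}^{0}_{v}(k(v)) = \# \mathcal{B}^{0}_{v}(k(v))$; granted this and Grothendieck's theorem, the rest is a direct calculation.
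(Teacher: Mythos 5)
Your argument follows the paper's proof almost step for step---identify the inertia invariants of the Tate module with the Tate module of the identity component of the N\'eron special fiber, decompose via Chevalley, use the Weil and Oesterl\'e point-count formulas and Lang's theorem, and read off integrality and $l$-independence from the decomposition---except that you carry out the whole computation on the dual side, with $V_{l}(B)^{I_{v}}$ and $\mathcal{B}^{0}_{v}$, and then must convert $\# \mathcal{B}^{0}_{v}(k(v))$ into $\# \mathcal{A}^{0}_{v}(k(v))$ at the end. That conversion is a genuine gap as you justify it. The structural input you invoke---that the abelian and toric parts of $\mathcal{A}^{0}_{v}$ and $\mathcal{B}^{0}_{v}$ are mutually dual---is available only when $A$ has semistable reduction at $v$ (it comes from rigid uniformization/SGA~7, where the semiabelian parts of the reductions of $A$ and $B$ are related through the monodromy pairing); there is no ``autoduality of N\'eron models'' in general, and for additive reduction the Chevalley pieces of $\mathcal{A}^{0}_{v}$ and $\mathcal{B}^{0}_{v}$ admit no such clean duality you can cite. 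The equality $\# \mathcal{A}^{0}_{v}(k(v)) = \# \mathcal{B}^{0}_{v}(k(v))$ is in fact true, but the natural proof of it is precisely the computation at issue (apply the proposition to both $A$ and $B$ and observe that $P_{v}$ is the same for both), so as written your appeal to it is circular; note also that an isogeny $A \to B$ need not induce an isogeny on the special fibers of the identity components when the reduction is not semistable, so one cannot simply transport point counts along an extended isogeny either.

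The gap is easy to close, and the paper closes it at the very first step: by \eqref{eq: isomorphisms of l adic representations}, $V_{l}(B)$ is (non-canonically) isomorphic to $V_{l}(A)$ as a Galois module, so the characteristic polynomial of $\varphi_{v}$ on $V_{l}(B)^{I_{v}}$ equals that on $V_{l}(A)^{I_{v}}$, and the dual abelian variety never needs to enter the computation. You may then run your argument verbatim with $V_{l}(A)^{I_{v}} \cong V_{l}(\mathcal{A}^{0}_{v})$: this identification is exactly the paper's reduction step (proved there directly from smoothness of $\mathcal{A}$, henselianness of $\Order_{v}$, unique $l$-divisibility of the kernel of reduction, and finiteness of $\pi_{0}(\mathcal{A}_{v})$, rather than by citing SGA~7), and your treatment of the unipotent, toric and abelian pieces coincides with the paper's use of Lang's theorem, Oesterl\'e's formula and Weil's results.
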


\begin{proof}
	This is well-known.
	We recall its proof.
	By \eqref{eq: isomorphisms of l adic representations},
	we have
		\[
				P_{v}(t)
			=
				\det \bigl(
						1 - \varphi_{v} t
					\bigm|
						V_{l}(A)^{I_{v}}(-1)
				\bigr)
			=
				\det \bigl(
						1 - N(v) \varphi_{v} t
					\bigm|
						V_{l}(A)^{I_{v}}
				\bigr).
		\]
	Let $K_{v}^{\ur}$ be the maximal unramified extension of $K_{v}$
	with ring of integers $\Order_{v}^{\ur}$.
	Then
		\[
				V_{l}(A)^{I_{v}}
			=
				V_{l}(A)(K^{\ur})
			=
				V_{l}(A(K^{\ur}))
			=
				V_{l}(\mathcal{A}(\Order_{v}^{\ur})).
		\]
	By the smoothness of $\mathcal{A}$,
	the reduction map $\mathcal{A}(\Order_{v}^{\ur}) \to \mathcal{A}(\closure{k(v)})$ is surjective.
	Its kernel is uniquely $l$-divisible.
	With the finiteness of the component group
	$\pi_{0}(\mathcal{A}_{v}) = \mathcal{A}_{v} / \mathcal{A}_{v}^{0}$
	of the fiber $\mathcal{A}_{v}$, we have
	$V_{l}(A)^{I_{v}} \cong V_{l}(\mathcal{A}_{v}^{0})$
	as $l$-adic representations over $k(v)$.
	By the Chevalley structure theorem,
	the algebraic group $\mathcal{A}_{v}^{0}$ over $k(v)$
	has a canonical filtration whose graded pieces are a torus $T$, a smooth connected unipotent group $U$
	and an abelian variety $A'$.
	Since $U$ is $p$-power-torsion,
	we have an exact sequence
	$0 \to V_{l}(T) \to V_{l}(\mathcal{A}_{v}^{0}) \to V_{l}(A') \to 0$
	and an equality
		\[
				P_{v}(t)
			=
					\det \bigl(
							1 - N(v) \varphi_{v} t
						\bigm|
							V_{l}(T)
					\bigr)
				\cdot
					\det \bigl(
							1 - N(v) \varphi_{v} t
						\bigm|
							V_{l}(A')
					\bigr).
		\]
	
	On the other hand, a short exact sequence of connected algebraic groups over a finite field
	induces a short exact sequence of their groups of rational points
	by Lang's theorem.
	We have $d = \dim(T) + \dim(U) + \dim(A')$.
	Hence
		\[
				\frac{
					\# \mathcal{A}^{0}(k(v))
				}{
					N(v)^{d}
				}
			=
					\frac{
						\# T(k(v))
					}{
						N(v)^{\dim(T)}
					}
				\cdot
					\frac{
						\# U(k(v))
					}{
						N(v)^{\dim(U)}
					}
				\cdot
					\frac{
						\# A'(k(v))
					}{
						N(v)^{\dim(A')}
					}.
		\]
	The group $U$ is a finite successive extension of copies of $\Ga$.
	Hence the middle factor in the right-hand side is $1$.
	
	Therefore we may treat $T$ and $A'$ separately.
	The $A'$-factor is classical and treated by Weil
	(use \cite[(1.1), (1.2)]{Tat68}), resulting that the polynomial
		$
			\det \bigl(
					1 - N(v) \varphi_{v} t
				\bigm|
					V_{l}(A')
			\bigr)
		$
	has $\Z$-coefficients, does not depend on $l$ and
		\[
				\det \bigl(
						1 - \varphi_{v}
					\bigm|
						V_{l}(A')
				\bigr)
			=
				\frac{
					\# A'(k(v))
				}{
					N(v)^{\dim(A')}
				}.
		\]
	About the $T$-factor,
	we have $V_{l}(T) \cong \Hom(X^{\ast}(T), \Q_{l}(1))$ as $l$-adic representations,
	where $X^{\ast}(T)$ is the character group of $T$.
	Hence
		\[
				\det \bigl(
						1 - N(v) \varphi_{v} t
					\bigm|
						V_{l}(T)
				\bigr)
			=
				\det \bigl(
						1 - \varphi_{v}^{-1} t
					\bigm|
						X^{\ast}(T)
				\bigr).
		\]
	This has $\Z$-coefficients and does not depend on $l$.
	Its value at $t = N(v)^{-1}$ is
		\[
				\frac{
				\det \bigl(
						N(v) - \varphi_{v}^{-1}
					\bigm|
						X^{\ast}(T)
				\bigr)
				}{
					N(v)^{\dim(T)}
				}
			=
				\frac{
					\# T(k(v))
				}{
					N(v)^{\dim(T)}
				},
		\]
	where the last equality is \cite[I, 1.5]{Oes84}.
\end{proof}

We define the (global) Tamagawa factor of $A$ by
	\[
			c(A)
		=
			\prod_{v}
				\# \pi_{0}(\mathcal{A}_{v})(k(v)).
	\]
Let $U$ and $\omega$ be as in Conj.\ \ref{conj: Tate BSD}.

\begin{Prop} \label{prop: measure of local points}
	\[
			\prod_{v \not\in U}
				\mu_{v}(A(K_{v}))
		=
				c(A)
			\cdot
				\mu \bigl(
					\Lie(\mathcal{A})(\Order_{\adele_{K}})
				\bigr)
			\cdot
				\prod_{v \not\in U}
					P_{v}(N(v)^{-1}).
	\]
\end{Prop}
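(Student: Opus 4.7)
The plan is to work place-by-place for $v \notin U$, express $\mu_v(A(K_v))$ in terms of $\mu_v(\Lie(\mathcal{A})(\Order_v))$ and the Euler factor $P_v(N(v)^{-1})$, then take the product and extend it to a product over all $v$ by absorbing trivial local contributions.

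First, I would observe that by the Néron mapping property, $A(K_v) = \mathcal{A}(\Order_v)$, and the smoothness of the Néron model gives a disjoint union decomposition of $\mathcal{A}(\Order_v)$ into $\#\mathcal{A}(k(v))$ cosets of the reduction kernel $\mathcal{A}(m_v)$, so $\mu_v(A(K_v)) = \#\mathcal{A}(k(v)) \cdot \mu_v(\mathcal{A}(m_v))$. The defining identity \eqref{eq: Haar measure of group and Lie algebra} gives $\mu_v(\mathcal{A}(m_v)) = \mu_v(\Lie(\mathcal{A})(m_v))$, and since $\Lie(\mathcal{A})$ is a rank-$d$ vector bundle, the quotient $\Lie(\mathcal{A})(\Order_v)/\Lie(\mathcal{A})(m_v)$ has order $N(v)^d$. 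Combining these yields
\[
    \mu_v(A(K_v))
    =
        \frac{\#\mathcal{A}(k(v))}{N(v)^d}
        \cdot
        \mu_v(\Lie(\mathcal{A})(\Order_v)).
\]

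Next, I would apply Lang's theorem to the exact sequence $0 \to \mathcal{A}_v^0 \to \mathcal{A}_v \to \pi_0(\mathcal{A}_v) \to 0$ (noting $\mathcal{A}_v^0$ is smooth and connected) to get $\#\mathcal{A}(k(v)) = \#\mathcal{A}_v^0(k(v)) \cdot \#\pi_0(\mathcal{A}_v)(k(v))$. Then Prop.\ \ref{prop: special value of Euler factor} identifies $\#\mathcal{A}_v^0(k(v)) / N(v)^d$ with $P_v(N(v)^{-1})$. Substituting gives
\[
        \mu_v(A(K_v))
    =
        \#\pi_0(\mathcal{A}_v)(k(v))
        \cdot
        P_v(N(v)^{-1})
        \cdot
        \mu_v(\Lie(\mathcal{A})(\Order_v)).
\]

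Finally, I would take the product over $v \notin U$ and extend each factor to a product over all $v$. For $v \in U$, good reduction forces $\pi_0(\mathcal{A}_v) = 0$, so $\prod_{v \notin U} \#\pi_0(\mathcal{A}_v)(k(v)) = c(A)$. The choice of $\omega$ makes it a nowhere-vanishing section of the dual of $\det(\Lie(\mathcal{A}))$ over $U$, hence $v(\omega) = 0$ and $\mu_v(\Lie(\mathcal{A})(\Order_v)) = 1$ for $v \in U$, so $\prod_{v \notin U} \mu_v(\Lie(\mathcal{A})(\Order_v)) = \mu(\Lie(\mathcal{A})(\Order_{\adele_K}))$. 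This yields the desired equality.

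There is no real obstacle; the argument is straightforward bookkeeping. The only point that requires care is making sure the normalizations match — in particular, using $\mu_v(\Order_v) = 1$ to conclude that the local factors for $v \in U$ contribute trivially to both $c(A)$ and $\mu(\Lie(\mathcal{A})(\Order_{\adele_K}))$, so that the restricted product over bad places agrees with the full adelic expression.
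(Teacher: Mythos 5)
Your proposal is correct and follows essentially the same route as the paper: the coset decomposition of $\mathcal{A}(\Order_{v})$ via the surjective reduction map, the identity \eqref{eq: Haar measure of group and Lie algebra} together with $\#\Lie(\mathcal{A})(k(v)) = N(v)^{d}$, Lang's theorem for the component group, and Prop.\ \ref{prop: special value of Euler factor}. Your explicit remark that the places $v \in U$ contribute trivially to both $c(A)$ and $\mu(\Lie(\mathcal{A})(\Order_{\adele_{K}}))$ is a point the paper leaves implicit, and is correctly justified.
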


\begin{proof}
	For any place $v$,
	the reduction map $A(K_{v}) = \mathcal{A}(\Order_{v}) \to \mathcal{A}(k(v))$
	is surjective
	since $\mathcal{A}$ is smooth and $\Order_{v}$ is henselian.
	Therefore we have an exact sequence
		\[
				0
			\to
				\mathcal{A}(m_{v})
			\to
				A(K_{v})
			\to
				\mathcal{A}(k(v))
			\to
				0
		\]
	and an equality
		\[
				\mu_{v}(A(K_{v}))
			=
					\# \mathcal{A}(k(v))
				\cdot
					\mu_{v}(\mathcal{A}(m_{v})).
		\]
	By Lang's theorem, we have an exact sequence
		\[
				0
			\to
				\mathcal{A}^{0}(k(v))
			\to
				\mathcal{A}(k(v))
			\to
				\pi_{0}(\mathcal{A}_{v})(k(v))
			\to
				0
		\]
	and hence an equality
		\[
				\# \mathcal{A}(k(v))
			=
					\# \pi_{0}(\mathcal{A}_{v})(k(v))
				\cdot
					\# \mathcal{A}^{0}(k(v)).
		\]
	By \eqref{eq: Haar measure of group and Lie algebra}, we have
		\[
				\mu_{v}(\mathcal{A}(m_{v}))
			=
				\mu_{v}(\Lie(\mathcal{A})(m_{v}))
			=
				\frac{
					\mu_{v}(\Lie(\mathcal{A})(\Order_{v}))
				}{
					\# \Lie(\mathcal{A})(k(v))
				}
			=
				\frac{
					\mu_{v}(\Lie(\mathcal{A})(\Order_{v}))
				}{
					N(v)^{d}
				}.
		\]
	Combining all the above, we get
		\[
				\mu_{v}(A(K_{v}))
			=
					\# \pi_{0}(\mathcal{A}_{v})(k(v))
				\cdot
					\mu_{v}(\Lie(\mathcal{A})(\Order_{v}))
				\cdot
					\frac{
						\# \mathcal{A}^{0}(k(v))
					}{
						N(v)^{d}
					}.
		\]
	The third factor in the right-hand side is $P_{v}(N(v)^{-1})$
	by Prop.\ \ref{prop: special value of Euler factor}.
	Taking the product over $v \not\in U$, we get the result.
\end{proof}

\begin{Prop} \label{prop: volume term in terms of Euler char}
	\[
			\vol \Bigl(
				\prod_{v \not\in U} A(K_{v})
			\Bigr)
		=
				c(A)
			\cdot
				q^{\chi(S, \Lie(\mathcal{A}))}
			\cdot
				\prod_{v \not\in U}
					P_{v}(N(v)^{-1}).
	\]
\end{Prop}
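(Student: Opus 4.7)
The plan is to substitute directly into the definition of $\vol\bigl(\prod_{v\not\in U}A(K_{v})\bigr)$, which is
	\[
			\vol \Bigl(
				\prod_{v \not\in U} A(K_{v})
			\Bigr)
		=
			\frac{
				\prod_{v \not\in U} \mu_{v}(A(K_{v}))
			}{
				\mu \bigl(
					\Lie(A)(\adele_{K}) / \Lie(A)(K)
				\bigr)
			}.
	\]
The two previous propositions compute the numerator and denominator respectively up to a common factor of $\mu\bigl(\Lie(\mathcal{A})(\Order_{\adele_{K}})\bigr)$ that will cancel.

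First I would apply Prop.\ \ref{prop: measure of local points} to rewrite the numerator as
	\[
			\prod_{v \not\in U} \mu_{v}(A(K_{v}))
		=
				c(A)
			\cdot
				\mu \bigl(
					\Lie(\mathcal{A})(\Order_{\adele_{K}})
				\bigr)
			\cdot
				\prod_{v \not\in U}
					P_{v}(N(v)^{-1}).
	\]
Then I would apply Prop.\ \ref{prop: measure of Lie ideles and Euler char} to rewrite the denominator as
	\[
			\mu \bigl(
				\Lie(A)(\adele_{K}) / \Lie(A)(K)
			\bigr)
		=
				\mu \bigl(
					\Lie(\mathcal{A})(\Order_{\adele_{K}})
				\bigr)
			\cdot
				q^{- \chi(S, \Lie(\mathcal{A}))}.
	\]
Dividing these two expressions, the adelic measure factor cancels and yields the claimed identity.

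There is essentially no obstacle: the proposition is a formal consequence of combining the two preceding results with the definition of $\vol$. The only point to note is that both propositions are applied with the same choice of differential form $\omega$ (used implicitly to define $\mu_v$ on $\Lie(A)(K_v)$ and hence on $A(K_v)$), so the adelic measure $\mu\bigl(\Lie(\mathcal{A})(\Order_{\adele_{K}})\bigr)$ appearing in the two formulas is literally the same quantity, making the cancellation valid. As an aside, the resulting formula is consistent with Prop.\ \ref{prop: comparison of measure terms of Tate and Kato Trihan} of the preceding section, since combined with the Riemann-Roch identity one recovers the relation between Tate's and Kato-Trihan's formulations of the conjecture.
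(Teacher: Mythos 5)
Your proposal is correct and is exactly the paper's argument: the authors likewise deduce the proposition by combining Prop.\ \ref{prop: measure of Lie ideles and Euler char} and Prop.\ \ref{prop: measure of local points} with the definition of $\vol$, with the factor $\mu\bigl(\Lie(\mathcal{A})(\Order_{\adele_{K}})\bigr)$ cancelling. Your remark that both propositions use the same $\omega$-dependent measures is a valid (if implicit in the paper) point.
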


\begin{proof}
	This follows from Prop.\ \ref{prop: measure of Lie ideles and Euler char} and \ref{prop: measure of local points}
\end{proof}

\begin{Prop}
	\[
			\lim_{s \to 1}
				\frac{
					L(A, s)
				}{
					L_{\Sigma}^{\ast}(A, s)
				}
		=
				c(A)
			\cdot
				q^{\chi(S, \Lie(\mathcal{A}))}.
	\]
\end{Prop}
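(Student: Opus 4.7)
The plan is to unwind the definitions of $L(A,s)$ and $L_{\Sigma}^{\ast}(A,s)$ so that the ratio is expressed purely in terms of measures and the bad Euler factors, and then to apply the preceding propositions to identify each piece.

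First I would write
\[
    \frac{L(A, s)}{L_{\Sigma}^{\ast}(A, s)}
    =
    \frac{\prod_{v \not\in U} \mu_{v}(A(K_{v}))}{\mu(\adele_{K} / K)^{d} \cdot \prod_{v \not\in U} P_{v}(N(v)^{-s})}
\]
by combining the definition $L(A,s) = L(U,A,s)\cdot \prod_{v\not\in U} P_v(N(v)^{-s})^{-1}$ with the definition of $L_{\Sigma}^{\ast}(A, s)$. The factor $L(U,A,s)$ cancels. Since by Prop.\ \ref{prop: special value of Euler factor} we have $P_v(N(v)^{-1}) = \# \mathcal{A}^0(k(v)) / N(v)^d > 0$ for each $v \not\in U$, the denominator is nonzero at $s=1$, and the limit is obtained by simply substituting $s = 1$.

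Next I would apply Prop.\ \ref{prop: measure of local points} to the numerator, giving
\[
    \lim_{s \to 1} \frac{L(A, s)}{L_{\Sigma}^{\ast}(A, s)}
    =
    c(A) \cdot \frac{\mu\bigl(\Lie(\mathcal{A})(\Order_{\adele_{K}})\bigr)}{\mu(\adele_{K} / K)^{d}},
\]
since the product $\prod_{v\not\in U} P_v(N(v)^{-1})$ cancels. It remains to identify the measure ratio with $q^{\chi(S, \Lie(\mathcal{A}))}$. This follows by combining Prop.\ \ref{prop: comparison of measure terms of Tate and Kato Trihan}, which says $\mu(\adele_{K}/K)^{d} = \mu\bigl(\Lie(A)(\adele_{K})/\Lie(A)(K)\bigr)$, with Prop.\ \ref{prop: measure of Lie ideles and Euler char}, which says the latter equals $\mu\bigl(\Lie(\mathcal{A})(\Order_{\adele_{K}})\bigr) \cdot q^{-\chi(S, \Lie(\mathcal{A}))}$. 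Dividing gives the desired $q^{\chi(S, \Lie(\mathcal{A}))}$.

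Alternatively, one could shorten the second step by invoking Prop.\ \ref{prop: volume term in terms of Euler char} directly: the expression displayed above for the limit is visibly $\vol\bigl(\prod_{v \not\in U} A(K_v)\bigr) / \prod_{v\not\in U} P_v(N(v)^{-1})$ once one uses Prop.\ \ref{prop: comparison of measure terms of Tate and Kato Trihan} to rewrite $\mu(\adele_K/K)^d$ as $\mu\bigl(\Lie(A)(\adele_K)/\Lie(A)(K)\bigr)$ in the denominator. There is no real obstacle here; the only thing to watch is to confirm that each $P_v(N(v)^{-1})$ is nonzero so that the passage to the limit is legitimate, which is guaranteed by Prop.\ \ref{prop: special value of Euler factor}.
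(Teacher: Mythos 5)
Your proposal is correct and is essentially the paper's argument: the paper splits the ratio as $\bigl(L(U,A,s)/L_{\Sigma}^{\ast}(A,s)\bigr)\cdot\bigl(L(A,s)/L(U,A,s)\bigr)$, identifies the first factor with $\vol\bigl(\prod_{v\notin U}A(K_v)\bigr)$ and applies Prop.\ \ref{prop: volume term in terms of Euler char} — exactly the shortcut you mention at the end — while you unpack that proposition into its ingredients (Prop.\ \ref{prop: measure of local points}, \ref{prop: comparison of measure terms of Tate and Kato Trihan} and \ref{prop: measure of Lie ideles and Euler char}) and cancel the bad Euler factors directly. Your explicit remark that $P_v(N(v)^{-1})\neq 0$ by Prop.\ \ref{prop: special value of Euler factor}, so that one may substitute $s=1$, is a point the paper leaves implicit.
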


\begin{proof}
	We have
		\begin{align*}
					\frac{L(U, A, s)}{L_{\Sigma}^{\ast}(A, s)}
			&	=
					\frac{
						\prod_{v \not\in U}
							\mu_{v}(A(K_{v}))
					}{
						\mu(\adele_{K} / K)^{d}
					}
			\\
			&	=
					\frac{
						\prod_{v \not\in U}
							\mu_{v}(A(K_{v}))
					}{
						\mu \bigl(
							\Lie(A)(\adele_{K}) / \Lie(A)(K)
						\bigr)
					}
			\\
			&	=
					\vol \Bigl(
						\prod_{v \not\in U} A(K_{v})
					\Bigr)
			\\
			&	=
						c(A)
					\cdot
						q^{\chi(S, \Lie(\mathcal{A}))}
					\cdot
						\prod_{v \not\in U}
							P_{v}(N(v)^{-1}),
		\end{align*}
	where the first and third equalities are by definition,
	the second by Prop.\ \ref{prop: comparison of measure terms of Tate and Kato Trihan}
	and the fourth by Prop.\ \ref{prop: volume term in terms of Euler char}.
	On the other hand,
		\[
				\frac{L(A, s)}{L(U, A, s)}
			=
				\prod_{v \not\in U}
					P_{v}(N(v)^{-s})^{-1}
			\to
				\prod_{v \not\in U}
					P_{v}(N(v)^{-1})^{-1}
		\]
	as $s \to 1$.
	Multiplying these two, we get the result.
\end{proof}

\begin{Cor} \label{cor: BSD with bad Euler factors}
	The formula in Conj.\ \ref{conj: Tate BSD}
	or \ref{conj: Kato Trihan BSD} is equivalent to the formula
		\[
				\lim_{s \to 1}
					\frac{
						L(A, s)
					}{
						(1 - q^{1 - s})^{r}
					}
			=
					\frac{
						\# \Sha(A)
					}{
						\# A(K)_{\tor} \cdot \# B(K)_{\tor}
					}
				\cdot
					\frac{
						\Disc(h)
					}{
						(\log q)^{r}
					}
				\cdot
					c(A)
				\cdot
					q^{\chi(S, \Lie(\mathcal{A}))}.
		\]
\end{Cor}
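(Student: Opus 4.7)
The strategy is to chain the immediately preceding proposition, which computes $\lim_{s \to 1} L(A, s) / L_{\Sigma}^{\ast}(A, s) = c(A) \cdot q^{\chi(S, \Lie(\mathcal{A}))}$, with Conj.\ \ref{conj: Tate BSD}, which gives the leading term of $L_{\Sigma}^{\ast}(A, s)$ at $s = 1$, and then perform a single change of normalization from $(s - 1)^{r}$ to $(1 - q^{1 - s})^{r}$. Equivalence with Conj.\ \ref{conj: Kato Trihan BSD} will then be automatic from Prop.\ \ref{prop: comparison of measure terms of Tate and Kato Trihan}.

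Concretely, I would first multiply the two asymptotic identities to obtain
\[
	\lim_{s \to 1} \frac{L(A, s)}{(s - 1)^{r}}
	=
		\frac{
			\# \Sha(A) \cdot \Disc(h)
		}{
			\# A(K)_{\tor} \cdot \# B(K)_{\tor}
		}
	\cdot
		c(A)
	\cdot
		q^{\chi(S, \Lie(\mathcal{A}))}.
\]
Then I would rewrite the denominator via the elementary Taylor expansion
\[
	1 - q^{1 - s}
	=
		1 - e^{-(s - 1) \log q}
	=
		(s - 1) \log q + O((s - 1)^{2})
\]
at $s = 1$, which gives $\lim_{s \to 1} (1 - q^{1 - s})^{r} / (s - 1)^{r} = (\log q)^{r}$. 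Dividing the previous identity by this constant rearranges it into the formula stated in the corollary.

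All of these steps are reversible because $c(A)$, $q^{\chi(S, \Lie(\mathcal{A}))}$, $(\log q)^{r}$, and $\Disc(h)$ are each nonzero real numbers, so the displayed formula is equivalent to Conj.\ \ref{conj: Tate BSD}, and hence also to Conj.\ \ref{conj: Kato Trihan BSD} by the results of Section \ref{sec: Comparison of the two formulas}. There is no real obstacle here: the substantive content has already been accumulated in the preceding propositions, and what remains is a short formal rearrangement together with the elementary expansion of $1 - q^{1 - s}$ at $s = 1$.
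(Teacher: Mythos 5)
Your proposal is correct and matches the paper's (implicit) argument: the corollary is stated right after the proposition computing $\lim_{s\to 1}L(A,s)/L_{\Sigma}^{\ast}(A,s)=c(A)\cdot q^{\chi(S,\Lie(\mathcal{A}))}$, and it follows exactly as you say by combining that identity with Tate's formula, converting $(s-1)^{r}$ to $(1-q^{1-s})^{r}$ via $\lim_{s\to 1}(1-q^{1-s})/(s-1)=\log q$, and noting all factors are nonzero so the implication reverses, with the Kato--Trihan case handled by the equivalence from Section \ref{sec: Comparison of the two formulas}.
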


By \eqref{eq: Riemann Roch},
we have
	\[
			\chi(S, \Lie(\mathcal{A}))
		=
				d \cdot \chi(S, \Order_{S})
			+
				\deg(\det(\Lie(\mathcal{A})))
		=
			d (1 - g) - \deg(\omega),
	\]
where $g$ is the genus of the curve $S$
and $\deg(\omega) = \sum_{v} [k(v) : \F_{q}] v(\omega)$.
Hence we can also write the above conjectural formula as
	\[
			\lim_{s \to 1}
				\frac{
					L(A, s)
				}{
					(s - 1)^{r}
				}
		=
				\frac{
					\# \Sha(A) \cdot \Disc(h)
				}{
					\# A(K)_{\tor} \cdot \# B(K)_{\tor}
				}
			\cdot
				q^{- \deg(\omega) + d (1 - g)}
			\cdot
				c(A).
	\]
If $A$ has good reduction everywhere,
this is the formula in \cite[Thm.\ 4.7]{Bau92},
which omits the factor $c(A)$ since $c(A) = 1$ for such $A$.

In the rest of this paper,
we will rewrite the right-hand side of the formula in Cor.\ \ref{cor: BSD with bad Euler factors}
using Weil-\'etale cohomology of $S$ with coefficients in $\mathcal{A}$.
We begin with the definition of Weil-\'etale cohomology and need some preparations.

%%%%%%%%%%%%%%%%%%%%%%%%%%%%%%%%%%%%%%%%%%%%%%%%%%%%%%%%%%%%%%%%%%%%%%%%%%%%%

\section{Review of Weil-\'etale cohomology}
\label{sec: Review of Weil etale cohomology}

We recall the definition of Weil-\'etale cohomology following \cite{Gei04}.
For a scheme $X$ over $\F_{q}$,
its base change to $\closure{\F_{q}}$ is denoted by $\closure{X}$.
For a sheaf $\mathcal{F}$ on $S_{\et}$,
its pullback to $\closure{S}_{\et}$ is denoted by $\closure{\mathcal{F}}$.
If $\mathcal{F}$ is representable by a scheme locally of finite type over $S$,
then these two pieces of notation are compatible
by a limit argument (\cite[II, Lem.\ 3.3, also Rmk.\ 3.4]{Mil80}).

Let $G \cong \Z$ be the Weil group of $\F_{q}$
and $\phi \in G$ the $q$-th power arithmetic Frobenius.
We denote the category of abelian groups (resp.\ $G$-modules) by $\Ab$ (resp.\ $\Mod_{G}$)
and the category of sheaves of abelian groups on $S_{\et}$ by $\Ab(S_{\et})$.
Consider the left exact functor $\Ab(S_{\et}) \to \Mod_{G}$
sending a sheaf $\mathcal{F}$
to the abelian group $\Gamma(\closure{S}, \closure{\mathcal{F}})$ with its natural $G$-action.
If $\mathcal{F}$ is an injective sheaf,
then $H_{\et}^{i}(\closure{S}, \closure{\mathcal{F}}) = 0$ for $i > 0$
by a limit argument (\cite[III, Lem.\ 1.16]{Mil80}).
Therefore the $i$-th right derived functor of
$\mathcal{F} \mapsto \Gamma(\closure{S}, \closure{\mathcal{F}})$
is $\mathcal{F} \mapsto H_{\et}^{i}(\closure{S}, \closure{\mathcal{F}})$ with the natural $G$-action.
Hence this derived functor agrees with
what is denoted by $R^{i} \Gamma_{\closure{S}}(\gamma^{\ast} \mathcal{F})$
in the notation of \cite[\S 6]{Gei04} by \cite[Lem.\ 6.1]{Gei04}.
Let
	\[
			D^{+}(S_{\et}) \to D^{+}(\Mod_{G}),
		\quad
			\mathcal{F}^{\cdot} \mapsto R \Gamma_{\et}(\closure{S}, \closure{\mathcal{F}^{\cdot}})
	\]
be the total right derived functor
on the bounded below derived categories.
By composing it with the group cohomology functor $R \Gamma(G, \;\cdot\;)$,
we have a triangulated functor to $D^{+}(\Ab)$,
which agrees with what is denoted by $R \Gamma_{W}(S, \gamma^{\ast} \;\cdot\;)$
in the notation of \cite[\S 6]{Gei04}.
Omitting $\gamma^{\ast}$ from the notation, we denote the resulting functor by
	\[
			D^{+}(S_{\et}) \to D^{+}(\Ab),
		\quad
				\mathcal{F}^{\cdot}
			\mapsto
				R \Gamma_{W}(S, \mathcal{F}^{\cdot})
			:=
				R \Gamma \bigl(
					G, R \Gamma_{\et}(\closure{S}, \closure{\mathcal{F}^{\cdot}})
				\bigr).
	\]
One may take this as the definition of Weil-\'etale cohomology of \'etale sheaves,
but see \cite{Gei04} for the full details.

For $\mathcal{F} \in \Ab(S_{\et})$,
we have $H_{W}^{0}(S, \mathcal{F}) = \Gamma(S, \mathcal{F})$.
Since $G \cong \Z$ is generated by $\phi$,
we have a long exact sequence
	\[
			\cdots
		\to
			H_{W}^{i}(S, \mathcal{F})
		\to
			H_{\et}^{i}(\closure{S}, \closure{\mathcal{F}})
		\overset{\phi - 1}{\to}
			H_{\et}^{i}(\closure{S}, \closure{\mathcal{F}})
		\to
			H_{W}^{i + 1}(S, \mathcal{F})
		\to
			\cdots
	\]
and a short exact sequence
	\begin{equation} \label{eq: Frobenius inv and coinv and Weil etale cohomology}
			0
		\to
			H^{i - 1}_{\et}(\closure{S}, \closure{\mathcal{F}})_{G}
		\to
			H_{W}^{i}(S, \mathcal{F})
		\to
			H_{\et}^{i}(\closure{S}, \closure{\mathcal{F}})^{G}
		\to
			0
	\end{equation}
for $\mathcal{F} \in \Ab(S_{\et})$,
where $(\;\cdot\;)_{G}$ and $(\;\cdot\;)^{G}$ denote
the $G$-coinvariants and $G$-invariants, respectively.
By \cite[Cor.\ 5.2]{Gei04}, there exists a canonical long exact sequence
	\begin{equation} \label{eq: etale and Weil etale exact sequence}
			\cdots
		\to
			H_{\et}^{i}(S, \mathcal{F})
		\to
			H_{W}^{i}(S, \mathcal{F})
		\to
			H_{\et}^{i - 1}(S, \mathcal{F}) \tensor \Q
		\to
			H_{\et}^{i + 1}(S, \mathcal{F})
		\to
			\cdots.
	\end{equation}

Let $e \in H^{1}(G, \Z) = \Hom(G, \Z)$ be the homomorphism
sending $\phi$ to $1$.
The cup product with $e$ gives a canonical homomorphism
$e \colon H_{W}^{i}(S, \mathcal{F}) \to H_{W}^{i + 1}(S, \mathcal{F})$.
This agrees with the composite
	\begin{equation} \label{eq: cup with e by geometric cohom}
			H_{W}^{i}(S, \mathcal{F})
		\to
			H_{\et}^{i}(\closure{S}, \closure{\mathcal{F}})^{G}
		\overset{\mathrm{can}}{\to}
			H_{\et}^{i}(\closure{S}, \closure{\mathcal{F}})_{G}
		\to
			H_{W}^{i + 1}(S, \mathcal{F})
	\end{equation}
by \cite[Lem.\ 6.2 b)]{Gei04}.
Since $e \cup e = 0$, we obtain a complex
	\[
			(H_{W}^{\ast}(S, \mathcal{F}), e)
		=
			[
					\cdots
				\overset{e}{\to}
					H_{W}^{i}(S, \mathcal{F})
				\overset{e}{\to}
					H_{W}^{i + 1}(S, \mathcal{F})
				\overset{e}{\to}
					H_{W}^{i + 2}(S, \mathcal{F})
				\overset{e}{\to}
					\cdots
			]
	\]
of abelian groups.

%%%%%%%%%%%%%%%%%%%%%%%%%%%%%%%%%%%%%%%%%%%%%%%%%%%%%%%%%%%%%%%%%%%%%%%%%%%%%

\section{Finite generation for N\'eron model coefficients}
\label{sec: Finite generation for Neron model coefficients}

The N\'eron model $\mathcal{A}$ and its subgroup scheme $\mathcal{A}^{0}$ represent sheaves on $S_{\et}$,
so that their Weil-\'etale cohomology groups
$H_{W}^{\ast}(S, \mathcal{A})$ and $H_{W}^{\ast}(S, \mathcal{A}^{0})$ make sense.
In this section, we study finiteness properties
of $H_{W}^{\ast}(S, \mathcal{A})$ and $H_{W}^{\ast}(S, \mathcal{A}^{0})$.
This is a continuation of what is studied in \cite[Prop.\ 4.2.10]{Suz19} and the paragraph after.

First recall from \cite{Suz19} the commutative group schemes
$\mathbf{H}^{n}(S, \mathcal{A})$ over $\F_{q}$ for each $n$,
a canonical subgroup scheme $\mathbf{H}^{1}(S, \mathcal{A})_{\divis}$ of $\mathbf{H}^{1}(S, \mathcal{A})$
and similar objects $\mathbf{H}^{n}(S, \mathcal{A}^{0})$, $\mathbf{H}^{1}(S, \mathcal{A}^{0})_{\divis}$.
We use the following results.

\begin{Prop}[\cite{Suz19}] \label{prop: cohom group schemes} \mbox{}
	\begin{enumerate}
		\item \label{item: cohom group schemes: rational points}
			The group of $\closure{\F_{q}}$-points of $\mathbf{H}^{n}(S, \mathcal{A})$ is given by
			$H_{\et}^{n}(\closure{S}, \closure{\mathcal{A}})$
			including the $G$-actions
			(\cite[Prop.\ 2.7.8]{Suz19}).
		\item \label{item: cohom group schemes: basic structure and cohom dim}
			$\mathbf{H}^{n}(S, \mathcal{A})$ is the perfection (inverse limit along Frobenius morphisms)
			of a smooth group scheme over $\F_{q}$ for any $n$ and
			$\mathbf{H}^{n}(S, \mathcal{A}) = 0$ for $n \ne 0, 1, 2$
			(\cite[Thm.\ 3.4.1 (1)]{Suz19}).
		\item \label{item: cohom group schemes: H zero}
			The identity component of $\mathbf{H}^{0}(S, \mathcal{A})$ is the perfection of an abelian variety
			and the component group of $\mathbf{H}^{0}(S, \mathcal{A})$ is
			an \'etale group with finitely generated group of geometric points
			(\cite[Thm.\ 3.4.1 (2)]{Suz19}).
		\item \label{item: cohom group schemes: H two}
			$\mathbf{H}^{2}(S, \mathcal{A})$ is a torsion \'etale group
			whose Pontryagin dual is the profinite Tate module of an abelian variety
			(\cite[Thm.\ 3.4.1 (2), (6a)]{Suz19}).
		\item \label{item: cohom group schemes: H one modulo div}
			The quotient $\mathbf{H}^{1}(S, \mathcal{A}) / \divis$
			of $\mathbf{H}^{1}(S, \mathcal{A})$ by $\mathbf{H}^{1}(S, \mathcal{A})_{\divis}$
			is the perfection of a commutative algebraic group with unipotent identity component
			(\cite[Thm.\ 3.4.1 (2)]{Suz19}).
		\item \label{item: cohom group schemes: H one div}
			$\mathbf{H}^{1}(S, \mathcal{A})_{\divis}$ is a divisible torsion \'etale group scheme
			with finite $n$-torsion part for any $n \ge 1$
			(\cite[Thm.\ 3.4.1 (2)]{Suz19}).
		\item \label{item: cohom group schemes: finiteness of Sha}
			Let $T(\mathbf{H}^{1}(S, \mathcal{A})_{\divis})$ be the profinite Tate module of
			$\mathbf{H}^{1}(S, \mathcal{A})_{\divis}$.
			Let $V(\mathbf{H}^{1}(S, \mathcal{A})_{\divis})$ be
			$T(\mathbf{H}^{1}(S, \mathcal{A})_{\divis}) \tensor \Q$.
			Then
				\[
						(V(\mathbf{H}^{1}(S, \mathcal{A})_{\divis}))^{G}
					=
						(V(\mathbf{H}^{1}(S, \mathcal{A})_{\divis}))_{G}
					=
						0
				\]
			if and only if $\Sha(A)$ is finite
			(\cite[Prop.\ 4.2.5]{Suz19}).
		\item \label{item: cohom group schemes: for connected Neron}
			The statements above also hold with $\mathcal{A}$ replaced by $\mathcal{A}^{0}$
			(\cite[Thm.\ 3.4.1 (3), Prop.\ 3.2.4]{Suz19}).
	\end{enumerate}
\end{Prop}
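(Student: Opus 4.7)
This proposition is a compilation of structural facts about the cohomology group schemes $\mathbf{H}^{n}(S, \mathcal{A})$, and my plan is to invoke the corresponding results of \cite{Suz19}, which supplies the required technical framework. The key input is the enhancement of étale cohomology with coefficients in the Néron model from abelian groups to perfections of commutative group schemes over $\F_{q}$; this enhancement is what permits phrases such as ``identity component'', ``divisible part'', and ``unipotent quotient'' to be interpreted literally rather than after profinite completion. First I would establish (a) by checking that the $\closure{\F_{q}}$-points of $\mathbf{H}^{n}(S, \mathcal{A})$ recover $H_{\et}^{n}(\closure{S}, \closure{\mathcal{A}})$ compatibly with the $G$-action, which comes down to identifying two derived functors agreeing on injectives. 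Statement (b), vanishing outside $n = 0, 1, 2$, would then reduce to the cohomological dimension of the curve for smooth-group coefficients, while representability as the perfection of a smooth scheme follows from a pro-representability argument exploiting smoothness of $\mathcal{A}$ and local finite generation.

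For (c) and (d), the Chevalley decomposition of $\mathcal{A}_{v}^{0}$ together with the Néron mapping property reduces the analysis to the abelian-variety piece: the identity component of $\mathbf{H}^{0}(S, \mathcal{A})$ is an abelian variety up to perfection, while $\mathbf{H}^{2}(S, \mathcal{A})$ becomes a torsion étale group whose Pontryagin dual is the Tate module of an abelian variety via Artin--Verdier-type duality against $\mathcal{B}$. Steps (e) and (f) isolate the finer structure of $\mathbf{H}^{1}(S, \mathcal{A})$: the divisible torsion étale part with finite $n$-torsion is read off from the $l$-adic cohomology of abelian varieties (each $l \neq p$ torsion level being finite and giving a divisible limit), while the unipotent quotient of $\mathbf{H}^{1} / \mathrm{divis}$ accounts for the non-étale $p$-power torsion that is genuinely infinitesimal in characteristic $p$.

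The main obstacle is (g), which asserts that finiteness of $\Sha(A)$ is equivalent to the vanishing of both $G$-invariants and $G$-coinvariants on $V(\mathbf{H}^{1}(S, \mathcal{A})_{\divis})$. Proving it requires a Cassels--Tate-style duality between $\mathbf{H}^{1}(S, \mathcal{A})_{\divis}$ and its analogue for $\mathcal{B}$, together with an identification, modulo finite error, of the resulting Tate module with that of the divisible part of $\Sha(A)$; classical finiteness of $\Sha(A)$ then translates into the desired vanishing via an eigenvalue argument for Frobenius ($1$ not being a Frobenius eigenvalue on a pure weight-$1$ object). Finally, (h) propagates the entire package to $\mathcal{A}^{0}$ via the short exact sequence with finite étale cokernel $\mathcal{A} / \mathcal{A}^{0}$, whose contribution modifies only finite pieces of each structural statement and does not affect the assertions about identity components, divisibility, or the invariant/coinvariant criterion.
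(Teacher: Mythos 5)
The paper offers no proof of this proposition: it is a compendium of results imported verbatim from \cite{Suz19} with precise references ([Suz19, Prop.\ 2.7.8], [Suz19, Thm.\ 3.4.1], [Suz19, Prop.\ 4.2.5]), so the citation \emph{is} the proof, and your opening plan to invoke \cite{Suz19} coincides exactly with what the paper does. Your supplementary sketches are therefore not needed for the comparison; the only caution is that your heuristic for item (g) --- deducing the vanishing of $(V(\mathbf{H}^{1}(S, \mathcal{A})_{\divis}))^{G}$ from ``$1$ not being a Frobenius eigenvalue on a pure weight-$1$ object'' --- should not be read as an unconditional purity argument, since that vanishing is \emph{equivalent} to the finiteness of $\Sha(A)$ (essentially the Tate conjecture for the associated surface) and not a consequence of weights alone.
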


In \cite[\S 4]{Suz19}, the $G$-coinvariants
$(V(\mathbf{H}^{1}(S, \mathcal{A})_{\divis}))_{G}$
is taken in (a category containing) the ind-category of profinite abelian groups
(see also the proof of Prop.\ \ref{prop: finite generation} below).
The object $(V(\mathbf{H}^{1}(S, \mathcal{A})_{\divis}))_{G}$ is zero
as an ind-object of profinite abelian groups
if and only if it is zero as an (abstract) abelian group,
since the $l$-adic Tate module $T_{l}(\mathbf{H}^{1}(S, \mathcal{A})_{\divis})$
is a finite free $\Z_{l}$-module for any prime $l$
by Assertion \eqref{item: cohom group schemes: H one div}.
Hence one may equivalently take the $G$-coinvariants
$(V(\mathbf{H}^{1}(S, \mathcal{A})_{\divis}))_{G}$
in the category of abelian groups
in Assertion \eqref{item: cohom group schemes: finiteness of Sha}.
A priori, $(V(\mathbf{H}^{1}(S, \mathcal{A})_{\divis}))_{G}$ might contain
a subgroup isomorphic to $(\prod_{l} \Z / l \Z) / (\bigoplus_{l} \Z / l \Z)$ for example.

We denote the groups of $\closure{\F_{q}}$-points of
$\mathbf{H}^{1}(S, \mathcal{A})_{\divis}$ and $\mathbf{H}^{1}(S, \mathcal{A}) / \divis$
by $H_{\et}^{1}(\closure{S}, \closure{\mathcal{A}})_{\divis}$
and $H_{\et}^{1}(\closure{S}, \closure{\mathcal{A}}) / \divis$, respectively.
We use the same notation with $\mathcal{A}$ replaced by $\mathcal{A}^{0}$.

\begin{Prop} \label{prop: geometric cohomology} \mbox{}
	\begin{enumerate}
		\item \label{item: geometric cohomology: cohom dimension}
			We have $H_{\et}^{n}(\closure{S}, \closure{\mathcal{A}}) = 0$
			for $n \ne 0, 1, 2$.
		\item \label{item: geometric cohomology: H zero}
			The groups $\closure{\mathcal{A}}(\closure{S})^{G}$
			and $\closure{\mathcal{A}}(\closure{S})_{G}$
			are finitely generated.
		\item \label{item: geometric cohomology: H two}
			The group $H_{\et}^{2}(\closure{S}, \closure{\mathcal{A}})^{G}$ is finite,
			and $H_{\et}^{2}(\closure{S}, \closure{\mathcal{A}})_{G}$ is trivial.
		\item \label{item: geometric cohomology: H one mod div}
			The groups $(H_{\et}^{1}(\closure{S}, \closure{\mathcal{A}}) / \divis)^{G}$
			and $(H_{\et}^{1}(\closure{S}, \closure{\mathcal{A}}) / \divis)_{G}$ are finite.
		\item \label{item: geometric cohomology: H one div is torsion cofinite}
			The group $H_{\et}^{1}(\closure{S}, \closure{\mathcal{A}})_{\divis}$ is divisible torsion
			with finite $n$-torsion part for any $n \ge 1$.
		\item \label{item: geometric cohomology: H one div and Sha}
			Let $T(H_{\et}^{1}(\closure{S}, \closure{\mathcal{A}})_{\divis})$ be
			the profinite Tate module of $H_{\et}^{1}(\closure{S}, \closure{\mathcal{A}})_{\divis}$.
			Let $V(H_{\et}^{1}(\closure{S}, \closure{\mathcal{A}})_{\divis})$ be
			$T(H_{\et}^{1}(\closure{S}, \closure{\mathcal{A}})_{\divis}) \tensor \Q$.
			Then we have
				\[
						(V(H_{\et}^{1}(\closure{S}, \closure{\mathcal{A}})_{\divis}))^{G}
					=
						(V(H_{\et}^{1}(\closure{S}, \closure{\mathcal{A}})_{\divis}))_{G}
					=
						0
				\]
			if and only if $\Sha(A)$ is finite.
		\item \label{item: geometric cohomology: connected Neron instead of Neron}
			The statements above also hold
			with $\mathcal{A}$ replaced by $\mathcal{A}^{0}$.
	\end{enumerate}
\end{Prop}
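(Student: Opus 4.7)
The plan is to translate each structural result on the commutative group scheme $\mathbf{H}^{n}(S, \mathcal{A})$ over $\F_{q}$ in Prop.\ \ref{prop: cohom group schemes} into the corresponding statement for its group of $\closure{\F_{q}}$-points equipped with the natural $G$-action, via the identification of Assertion \eqref{item: cohom group schemes: rational points}. Assertion \eqref{item: geometric cohomology: cohom dimension} is then immediate from \eqref{item: cohom group schemes: basic structure and cohom dim}, and \eqref{item: geometric cohomology: connected Neron instead of Neron} will follow by repeating every argument with $\mathcal{A}$ replaced by $\mathcal{A}^{0}$, invoking Assertion \eqref{item: cohom group schemes: for connected Neron}.

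For \eqref{item: geometric cohomology: H zero}, I would decompose $\mathbf{H}^{0}(S, \mathcal{A})$ by \eqref{item: cohom group schemes: H zero} as an extension of a finitely generated étale component group by the perfection of an abelian variety $A'$ over $\F_{q}$. Lang's theorem makes $\phi - 1$ surjective on $A'(\closure{\F_{q}})$ so that the $G$-coinvariants vanish, while the $G$-invariants equal the finite group $A'(\F_{q})$; the component group, being a finitely generated $G$-module, has finitely generated invariants and coinvariants, so the long exact sequences associated with the short exact sequence of $G$-modules yield \eqref{item: geometric cohomology: H zero}. For \eqref{item: geometric cohomology: H two}, \eqref{item: cohom group schemes: H two} identifies $H_{\et}^{2}(\closure{S}, \closure{\mathcal{A}})$ with the Pontryagin dual of the profinite Tate module $T$ of some abelian variety over $\F_{q}$; since the eigenvalues of $\phi$ on $T \tensor \Q$ are Weil numbers of absolute value $q^{1/2} \ne 1$, the operator $\phi - 1$ is invertible there, forcing $T^{G} = 0$ and making $T / (\phi - 1) T$ finite, whose Pontryagin duals are, respectively, the trivial $G$-coinvariants and the finite $G$-invariants. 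For \eqref{item: geometric cohomology: H one mod div}, \eqref{item: cohom group schemes: H one modulo div} presents $\mathbf{H}^{1}(S, \mathcal{A}) / \divis$ as the perfection of a commutative algebraic group with unipotent identity component and (automatically) finite component group; Lang's theorem on the unipotent part together with finiteness of the component group make both invariants and coinvariants finite. Assertion \eqref{item: geometric cohomology: H one div is torsion cofinite} is a direct translation of \eqref{item: cohom group schemes: H one div} to $\closure{\F_{q}}$-points.

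The remaining \eqref{item: geometric cohomology: H one div and Sha} is where I expect the main work. Here I would identify the abstract Tate module $T(H_{\et}^{1}(\closure{S}, \closure{\mathcal{A}})_{\divis})$ with the group of geometric points of the profinite Tate module $T(\mathbf{H}^{1}(S, \mathcal{A})_{\divis})$ as $G$-modules; since each $l^{i}$-torsion subgroup scheme of $\mathbf{H}^{1}(S, \mathcal{A})_{\divis}$ is finite étale by \eqref{item: cohom group schemes: H one div}, the two constructions agree after passing to the inverse limit and tensoring with $\Q$. The main obstacle is then to verify that the abstract $G$-coinvariants of $V$ coincide with the ind-profinite $G$-coinvariants used in \cite[\S 4]{Suz19}; the discussion immediately following Prop.\ \ref{prop: cohom group schemes}, based on the $\Z_{l}$-freeness of each $T_{l}(\mathbf{H}^{1}(S, \mathcal{A})_{\divis})$, is precisely what reduces both conditions to the same abelian-group statement, after which \eqref{item: cohom group schemes: finiteness of Sha} yields the equivalence with finiteness of $\Sha(A)$.
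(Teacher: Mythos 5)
Your proposal is correct and follows essentially the same route as the paper: each assertion is obtained by pushing the structure results of Prop.\ \ref{prop: cohom group schemes} through the $\closure{\F_{q}}$-points functor, with Lang's theorem handling the connected pieces, Pontryagin duality handling $H_{\et}^{2}$, and the comparison of abstract versus ind-profinite $G$-coinvariants for part (\ref{item: geometric cohomology: H one div and Sha}) resolved exactly as in the remark following Prop.\ \ref{prop: cohom group schemes}. The only cosmetic difference is that for part (\ref{item: geometric cohomology: H two}) you invoke Weil numbers where the paper reuses the Lang-theorem argument; both yield that $\phi - 1$ is injective with finite cokernel on the relevant Tate module.
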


\begin{proof}
	\eqref{item: geometric cohomology: cohom dimension}
	This follows from Prop.\ \ref{prop: cohom group schemes}
	\eqref{item: cohom group schemes: rational points},
	\eqref{item: cohom group schemes: basic structure and cohom dim}.
	
	\eqref{item: geometric cohomology: H zero}
	First, the endomorphism $\phi - 1$ on any commutative connected algebraic group over $\F_{q}$
	is surjective with finite kernel by Lang's theorem.
	The same is true with ``commutative connected algebraic group''
	replaced by the perfection of such a group.
	Hence Prop.\ \ref{prop: cohom group schemes}
	\eqref{item: cohom group schemes: H zero} implies the result.
	
	\eqref{item: geometric cohomology: H two}, \eqref{item: cohom group schemes: H two}
	The same argument as the proof of the previous assertion applies
	by Prop.\ \ref{prop: cohom group schemes}
	\eqref{item: cohom group schemes: H two},
	\eqref{item: cohom group schemes: H one modulo div},
	respectively.
	
	\eqref{item: geometric cohomology: H one div is torsion cofinite},
	\eqref{item: geometric cohomology: H one div and Sha},
	\eqref{item: geometric cohomology: connected Neron instead of Neron}
	These follow from
	Prop.\ \ref{prop: cohom group schemes}
	\eqref{item: cohom group schemes: H one div},
	\eqref{item: cohom group schemes: finiteness of Sha},
	\eqref{item: cohom group schemes: for connected Neron},
	respectively.
\end{proof}

\begin{Prop} \label{prop: Weil etale cohom} \mbox{}
	\begin{enumerate}
		\item \label{item: Weil etale cohom: cohom dimension}
			We have $H_{W}^{n}(S, \mathcal{A}) = 0$ for $n \ne 0, 1, 2$.
		\item \label{item: Weil etale cohom: H zero finitely generated}
			The group $H_{W}^{0}(S, \mathcal{A})$ is finitely generated.
		\item \label{item: Weil etale cohom: H two torsion}
			The group $H_{W}^{2}(S, \mathcal{A})$ is torsion.
		\item \label{item: Weil etale cohom: H one and geom H one}
			The group $H_{W}^{1}(S, \mathcal{A})$ is finitely generated if and only if
			the torsion group $(H_{\et}^{1}(\closure{S}, \closure{\mathcal{A}})_{\divis})^{G}$ is finite.
		\item \label{item: Weil etale cohom: H two and geome H one}
			The group $H_{W}^{2}(S, \mathcal{A})$ is finite if and only if
			the divisible group $(H_{\et}^{1}(\closure{S}, \closure{\mathcal{A}})_{\divis})_{G}$ is trivial.
		\item \label{item: Weil etale cohom: connected Neron instead of Neron}
			The statements above also hold with $\mathcal{A}$ replaced by $\mathcal{A}^{0}$.
	\end{enumerate}
\end{Prop}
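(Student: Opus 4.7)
The plan is to systematically reduce each assertion to Prop.\ \ref{prop: geometric cohomology} via the short exact sequence \eqref{eq: Frobenius inv and coinv and Weil etale cohomology}
	\[
			0
		\to
			H^{i - 1}_{\et}(\closure{S}, \closure{\mathcal{A}})_{G}
		\to
			H_{W}^{i}(S, \mathcal{A})
		\to
			H_{\et}^{i}(\closure{S}, \closure{\mathcal{A}})^{G}
		\to
			0,
	\]
combined with the six-term $G$-invariants/coinvariants exact sequence attached to the canonical filtration $0 \to H^{1}_{\et}(\closure{S}, \closure{\mathcal{A}})_{\divis} \to H^{1}_{\et}(\closure{S}, \closure{\mathcal{A}}) \to H^{1}_{\et}(\closure{S}, \closure{\mathcal{A}}) / \divis \to 0$; this six-term sequence is available because $G \cong \Z$ has cohomological dimension $1$.

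Assertions (a)--(c) are essentially bookkeeping. For (a), both flanks of \eqref{eq: Frobenius inv and coinv and Weil etale cohomology} vanish when $i \notin \{0,1,2,3\}$ by Prop.\ \ref{prop: geometric cohomology}\eqref{item: geometric cohomology: cohom dimension}, and the left flank in degree $3$ vanishes by Prop.\ \ref{prop: geometric cohomology}\eqref{item: geometric cohomology: H two}. For (b) the sequence collapses to $H_W^{0}(S, \mathcal{A}) \cong \closure{\mathcal{A}}(\closure{S})^G$, which is finitely generated by Prop.\ \ref{prop: geometric cohomology}\eqref{item: geometric cohomology: H zero}. For (c) the right flank $H^{2}_{\et}(\closure{S}, \closure{\mathcal{A}})^G$ is finite by Prop.\ \ref{prop: geometric cohomology}\eqref{item: geometric cohomology: H two}, while the left flank $H^{1}_{\et}(\closure{S}, \closure{\mathcal{A}})_G$ is an extension with finite quotient $(H^{1}_{\et}(\closure{S}, \closure{\mathcal{A}}) / \divis)_G$ and torsion subobject coming from $(H^{1}_{\et}(\closure{S}, \closure{\mathcal{A}})_{\divis})_G$ (Prop.\ \ref{prop: geometric cohomology}\eqref{item: geometric cohomology: H one mod div}, \eqref{item: geometric cohomology: H one div is torsion cofinite}), hence torsion.

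For (d) and (e), the real work lies in the $\divis$-filtration. Taking $G$-invariants gives an inclusion $(H^{1}_{\et}(\closure{S}, \closure{\mathcal{A}})_{\divis})^G \hookrightarrow H^{1}_{\et}(\closure{S}, \closure{\mathcal{A}})^G$ with cokernel inside the finite group $(H^{1}_{\et}(\closure{S}, \closure{\mathcal{A}}) / \divis)^G$, so since $\closure{\mathcal{A}}(\closure{S})_G$ is finitely generated, $H_W^{1}(S, \mathcal{A})$ is finitely generated if and only if the torsion group $(H^{1}_{\et}(\closure{S}, \closure{\mathcal{A}})_{\divis})^G$ is finite, proving (d). For (e), finiteness of $H_W^{2}(S, \mathcal{A})$ is equivalent to finiteness of $H^{1}_{\et}(\closure{S}, \closure{\mathcal{A}})_G$; the six-term sequence shows the natural map $(H^{1}_{\et}(\closure{S}, \closure{\mathcal{A}})_{\divis})_G \to H^{1}_{\et}(\closure{S}, \closure{\mathcal{A}})_G$ has finite kernel (a quotient of the finite group $(H^{1}_{\et}(\closure{S}, \closure{\mathcal{A}}) / \divis)^G$) and finite cokernel $(H^{1}_{\et}(\closure{S}, \closure{\mathcal{A}}) / \divis)_G$, so $H_W^{2}(S, \mathcal{A})$ is finite if and only if $(H^{1}_{\et}(\closure{S}, \closure{\mathcal{A}})_{\divis})_G$ is finite, which, since coinvariants of a divisible group are divisible, is equivalent to its being trivial. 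Part (f) follows by invoking the $\mathcal{A}^{0}$-version Prop.\ \ref{prop: geometric cohomology}\eqref{item: geometric cohomology: connected Neron instead of Neron} throughout. There is no real obstacle; the one substantive observation is the divisibility-to-vanishing upgrade in (e).
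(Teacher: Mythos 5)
Your argument is correct and follows essentially the same route as the paper: the short exact sequence \eqref{eq: Frobenius inv and coinv and Weil etale cohomology} specialized in each degree, the six-term $(\;\cdot\;)^{G}/(\;\cdot\;)_{G}$ sequence coming from the filtration by $H_{\et}^{1}(\closure{S}, \closure{\mathcal{A}})_{\divis}$, and the inputs from Prop.\ \ref{prop: geometric cohomology}, including the divisibility-implies-triviality step in (e). You merely spell out the bookkeeping that the paper leaves implicit.
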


\begin{proof}
	\eqref{item: Weil etale cohom: cohom dimension}
	follows from Prop.\ \ref{prop: geometric cohomology}
	\eqref{item: geometric cohomology: cohom dimension}
	and \eqref{item: geometric cohomology: H two}
	and the exact sequence \eqref{eq: Frobenius inv and coinv and Weil etale cohomology}.
	The rest of the statements follow from the exact sequences
			\begin{gather*}
				0
			\to
				\closure{\mathcal{A}}(\closure{S})_{G}
			\to
				H_{W}^{1}(S, \mathcal{A})
			\to
				H_{\et}^{1}(\closure{S}, \closure{\mathcal{A}})^{G}
			\to
				0,
			\\
				0
			\to
				H_{\et}^{1}(\closure{S}, \closure{\mathcal{A}})_{G}
			\to
				H_{W}^{2}(S, \mathcal{A})
			\to
				H_{\et}^{2}(\closure{S}, \closure{\mathcal{A}})^{G}
			\to
				0,
		\end{gather*}
	and
		\begin{align*}
					0
			&	\to
					(H_{\et}^{1}(\closure{S}, \closure{\mathcal{A}})_{\divis})^{G}
				\to
					H_{\et}^{1}(\closure{S}, \closure{\mathcal{A}})^{G}
				\to
					(H_{\et}^{1}(\closure{S}, \closure{\mathcal{A}}) / \divis)^{G}
			\\
			&	\to
					(H_{\et}^{1}(\closure{S}, \closure{\mathcal{A}})_{\divis})_{G}
				\to
					H_{\et}^{1}(\closure{S}, \closure{\mathcal{A}})_{G}
				\to
					(H_{\et}^{1}(\closure{S}, \closure{\mathcal{A}}) / \divis)_{G}
				\to
					0,
		\end{align*}
	and Prop.\ \ref{prop: geometric cohomology}.
\end{proof}

Of course $H_{W}^{0}(S, \mathcal{A}) \cong A(K)$ is finitely generated
also by the Mordell-Weil theorem.
The group $H_{W}^{0}(S, \mathcal{A}^{0}) \cong \mathcal{A}^{0}(S)$ is
a finite index subgroup of $A(K)$.

\begin{Prop} \label{prop: finite generation}
	The group $H_{W}^{1}(S, \mathcal{A})$ is finitely generated
	if and only if all the groups $H_{W}^{\ast}(S, \mathcal{A})$ are finitely generated
	if and only if $\Sha(A)$ is finite.
	The same is true with $\mathcal{A}$ replaced by $\mathcal{A}^{0}$.
\end{Prop}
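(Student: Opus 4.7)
The plan is to combine the structural results of Propositions \ref{prop: geometric cohomology} and \ref{prop: Weil etale cohom} with a short $l$-adic linear algebra argument. First I would observe that, by Prop.\ \ref{prop: Weil etale cohom} \eqref{item: Weil etale cohom: cohom dimension}--\eqref{item: Weil etale cohom: H two torsion}, the group $H_{W}^{n}(S, \mathcal{A})$ vanishes for $n \notin \{0, 1, 2\}$, $H_{W}^{0}$ is always finitely generated, and $H_{W}^{2}$ is torsion (hence finitely generated iff finite). Writing $D := H_{\et}^{1}(\closure{S}, \closure{\mathcal{A}})_{\divis}$, Prop.\ \ref{prop: Weil etale cohom} \eqref{item: Weil etale cohom: H one and geom H one} and \eqref{item: Weil etale cohom: H two and geome H one} then translate ``all $H_{W}^{\ast}(S, \mathcal{A})$ are finitely generated'' into the conjunction ``$D^{G}$ finite and $D_{G}$ trivial,'' and translate ``$H_{W}^{1}(S, \mathcal{A})$ is finitely generated'' into just ``$D^{G}$ finite.''

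The core step is to link both conditions to the vanishing of $V^{G}$ and $V_{G}$ for $V := V(D)$. By Prop.\ \ref{prop: geometric cohomology} \eqref{item: geometric cohomology: H one div is torsion cofinite}, $D$ is divisible torsion with finite $n$-torsion, so for each prime $l$ there is a short exact sequence $0 \to T_{l}(D) \to V_{l}(D) \to D_{l} \to 0$ with $T_{l}(D)$ a finite free $\Z_{l}$-module. Applying the snake lemma to $\phi - 1$ produces the six-term sequence
	\[
			0
		\to
			T_{l}(D)^{G}
		\to
			V_{l}(D)^{G}
		\to
			D_{l}^{G}
		\to
			T_{l}(D)_{G}
		\to
			V_{l}(D)_{G}
		\to
			(D_{l})_{G}
		\to
			0.
	\]
A short diagram chase, using that a $\Q_{l}$-vector space which is finitely generated over $\Z_{l}$ must vanish, extracts $V_{l}(D)^{G} = 0 \iff D_{l}^{G}$ finite, and $V_{l}(D)_{G} = 0 \iff (D_{l})_{G} = 0$. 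Moreover, since $\phi - 1$ is an endomorphism of the finite-dimensional $\Q_{l}$-space $V_{l}(D)$, rank-nullity gives $V_{l}(D)^{G} = 0 \iff V_{l}(D)_{G} = 0$.

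Putting these together across all $l$, and matching the conventions for $V^{G}$ and $V_{G}$ discussed after Prop.\ \ref{prop: cohom group schemes}, I obtain the chain of equivalences $H_{W}^{1}(S, \mathcal{A})$ finitely generated $\iff D^{G}$ finite $\iff V^{G} = 0 \iff V_{G} = 0 \iff D_{G}$ trivial $\iff$ all $H_{W}^{\ast}(S, \mathcal{A})$ finitely generated. Combined with Prop.\ \ref{prop: geometric cohomology} \eqref{item: geometric cohomology: H one div and Sha}, which identifies $V^{G} = V_{G} = 0$ with finiteness of $\Sha(A)$, this completes the proposition; the case of $\mathcal{A}^{0}$ goes through identically via Prop.\ \ref{prop: geometric cohomology} \eqref{item: geometric cohomology: connected Neron instead of Neron} and Prop.\ \ref{prop: Weil etale cohom} \eqref{item: Weil etale cohom: connected Neron instead of Neron}. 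The deep input — the Cassels--Tate-type identification of $\Sha(A)$ with vanishing of $V^{G}$ and $V_{G}$ — has already been imported from \cite{Suz19}, so the main real obstacle is just bookkeeping: one must check that ``$D^{G}$ finite'' as a statement over all primes simultaneously matches the paper's chosen interpretation of $V^{G}$, a subtlety addressed in the remark after Prop.\ \ref{prop: cohom group schemes}.
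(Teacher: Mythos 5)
Your prime-by-prime reductions are fine as far as they go (the six-term snake-lemma sequence and the two finiteness criteria at a fixed $l$ are exactly the right local statements), but there is a genuine gap at the step ``putting these together across all $l$''. Write $D = H_{\et}^{1}(\closure{S}, \closure{\mathcal{A}})_{\divis}$, $T = T(D) = \prod_{l} T_{l}(D)$ and $V = T \tensor \Q$. The condition $V^{G} = 0$ is indeed equivalent to $V_{l}(D)^{G} = 0$ for every $l$, but the other two conditions you need are strictly stronger than their prime-by-prime counterparts: $D^{G} = \bigoplus_{l} D_{l}^{G}$ is finite only if $D_{l}^{G} = 0$ for almost all $l$, and $V_{G} = \Coker(\phi - 1 \mid V)$ can be nonzero even when every $V_{l}(D)_{G} = 0$ --- this is exactly what the remark after Prop.\ \ref{prop: cohom group schemes} is warning about; it is not merely a matter of matching conventions. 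Concretely, if $\phi$ acted on $D_{l} \cong \Q_{l}/\Z_{l}$ as $1 + l$ for every $l$, then rank--nullity gives $V_{l}^{G} = V_{l, G} = 0$ for all $l$, yet $D^{G} = \bigoplus_{l} \Z/l\Z$ is infinite and $\phi - 1$ is not surjective on $V$ (a preimage of $(1, 1, \dots) \in \prod_{l} T_{l}$ would need unbounded denominators). So your middle equivalences ``$D^{G}$ finite $\iff V^{G} = 0 \iff V_{G} = 0$'' do not follow from rank--nullity at each prime, and they are precisely what you need in order to invoke Prop.\ \ref{prop: geometric cohomology} \eqref{item: geometric cohomology: H one div and Sha}, which is a statement about the global $V^{G}$ and $V_{G}$.

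The missing input is uniformity in $l$, and it must be supplied separately in each direction; this is where the paper's proof does real work. For ``$\Sha(A)$ finite $\Rightarrow$ finite generation'', one runs the six-term sequence for $0 \to T \to V \to D \to 0$ at all primes simultaneously: $V^{G} = V_{G} = 0$ forces $D_{G} = 0$ and yields an isomorphism between the torsion group $D^{G}$ and the profinite group $T_{G}$, and an object that is at once ind-finite and pro-finite is finite --- this is the $\Ind(\Pro(\mathcal{C}))$ argument of the paper (concretely: $\prod_{l} T_{l, G}$ can be torsion only if almost all factors vanish). For ``$H_{W}^{1}(S, \mathcal{A})$ finitely generated $\Rightarrow \Sha(A)$ finite'', one must use the full strength of the hypothesis: finiteness of the whole group $D^{G}$ gives that $\phi - 1$ is surjective on each $l$-primary part of $D$ and bijective for almost all $l$, hence injective with finite cokernel on $T = \prod_{l} T_{l}$, hence bijective on $V$, which is the required global vanishing $V^{G} = V_{G} = 0$; your per-prime rank--nullity step cannot produce this. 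With these two repairs your outline collapses onto the paper's proof rather than simplifying it.
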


\begin{proof}
	Let $\mathcal{C}$ be the category of finite abelian groups.
	Let $\Pro(\mathcal{C})$ be the pro-category of $\mathcal{C}$
	and $\Ind(\Pro(\mathcal{C}))$ the ind-category of $\Pro(\mathcal{C})$
	(\cite[Def.\ 6.1.1]{KS06}).
	They are abelian categories by \cite[Thm.\ 8.6.5 (i)]{KS06}.
	The category $\Pro(\mathcal{C})$ is just the category of profinite abelian groups.
	Since the natural functor $\mathcal{C} \to \Pro(\mathcal{C})$ is fully faithful,
	the induced functor $\Ind(\mathcal{C}) \to \Ind(\Pro(\mathcal{C}))$
	is also fully faithful
	(\cite[Prop.\ 6.1.10]{KS06}),
	where the ind-category $\Ind(\mathcal{C})$ of $\mathcal{C}$
	is just the category of torsion abelian groups.
	
	Consider the short exact sequence
		\[
				0
			\to
				T(H_{\et}^{1}(\closure{S}, \closure{A})_{\divis})
			\to
				V(H_{\et}^{1}(\closure{S}, \closure{A})_{\divis})
			\to
				H_{\et}^{1}(\closure{S}, \closure{A})_{\divis}
			\to
				0
		\]
	of $G$-modules.
	We view $H_{\et}^{1}(\closure{S}, \closure{A})_{\divis} \in \Ind(\mathcal{C})$,
	$T(H_{\et}^{1}(\closure{S}, \closure{A})_{\divis}) \in \Pro(\mathcal{C})$ and
	$V(H_{\et}^{1}(\closure{S}, \closure{A})_{\divis}) \in \Ind(\Pro(\mathcal{C}))$.
	(The object $V(H_{\et}^{1}(\closure{S}, \closure{A})_{\divis})$ is of course a locally compact group.)
	Consequently, we may view the above sequence as a short exact sequence of
	$G$-module objects in the abelian category $\Ind(\Pro(\mathcal{C}))$.
	We have the induced long exact sequence
		\begin{align*}
					0
			&	\to
					(T(H_{\et}^{1}(\closure{S}, \closure{A})_{\divis}))^{G}
				\to
					(V(H_{\et}^{1}(\closure{S}, \closure{A})_{\divis}))^{G}
				\to
					(H_{\et}^{1}(\closure{S}, \closure{A})_{\divis})^{G}
			\\
			&	\to
					(T(H_{\et}^{1}(\closure{S}, \closure{A})_{\divis}))_{G}
				\to
					(V(H_{\et}^{1}(\closure{S}, \closure{A})_{\divis}))_{G}
				\to
					(H_{\et}^{1}(\closure{S}, \closure{A})_{\divis})_{G}
				\to
					0
		\end{align*}
	in $\Ind(\Pro(\mathcal{C}))$.
	
	Therefore if $\Sha(A)$ is finite,
	then by Prop.\ \ref{prop: geometric cohomology}
	\eqref{item: geometric cohomology: H one div and Sha},
	the groups $(T(H_{\et}^{1}(\closure{S}, \closure{A})_{\divis}))^{G}$
	and $(H_{\et}^{1}(\closure{S}, \closure{A})_{\divis})_{G}$ are trivial,
	and we have an isomorphism
	from $(H_{\et}^{1}(\closure{S}, \closure{A})_{\divis})^{G} \in \Ind(\mathcal{C})$
	to $(T(H_{\et}^{1}(\closure{S}, \closure{A})_{\divis}))_{G} \in \Pro(\mathcal{C})$
	in $\Ind(\Pro(\mathcal{C}))$.
	This implies that these isomorphic groups are in $\mathcal{C}$, i.e.\ finite.
	Therefore all of $H_{W}^{\ast}(S, \mathcal{A})$ are finitely generated
	by Prop.\ \ref{prop: geometric cohomology}.
	
	Conversely, if $H_{W}^{1}(S, \mathcal{A})$ is finitely generated,
	then $(H_{\et}^{1}(\closure{S}, \closure{\mathcal{A}})_{\divis})^{G}$ is finite.
	Therefore by Prop.\ \ref{prop: geometric cohomology}
	\eqref{item: geometric cohomology: H one div is torsion cofinite},
	the endomorphism $\phi - 1$ on the $l$-primary part of
	$H_{\et}^{1}(\closure{S}, \closure{\mathcal{A}})_{\divis}$
	is surjective for any prime number $l$ (possibly equal to $p$)
	and invertible for almost all $l$.
	Hence $\phi - 1$ on $T(H_{\et}^{1}(\closure{S}, \closure{A})_{\divis})$
	is injective with finite cokernel.
	Hence $\phi - 1$ on $V(H_{\et}^{1}(\closure{S}, \closure{A})_{\divis})$
	is an isomorphism.
	Thus by Prop.\ \ref{prop: geometric cohomology}
	\eqref{item: geometric cohomology: H one div and Sha},
	we know that $\Sha(A)$ is finite.
	
	The case of $\mathcal{A}^{0}$ can be treated similarly
	(or reduced to the case of $\mathcal{A}$).
\end{proof}

%%%%%%%%%%%%%%%%%%%%%%%%%%%%%%%%%%%%%%%%%%%%%%%%%%%%%%%%%%%%%%%%%%%%%%%%%%%%%

\section{Duality}
\label{sec: Duality}

We recall the duality result \cite[Prop.\ 4.2.10]{Suz19} on $H_{W}^{\ast}(S, \mathcal{A})$.
The Poincar\'e bundle $P$ on $A \times_{K} B$ canonically extends to
a line bundle $\mathcal{P}$ on $\mathcal{A} \times_{S} \mathcal{B}^{0}$ and
defines a morphism $\mathcal{A} \tensor^{L} \mathcal{B}^{0} \to \Gm[1]$ in $D(S_{\et})$
by \cite[IX, 1.4.3]{Gro72},
where $\tensor^{L}$ denotes the derived tensor product.
See \cite[III, Appendix C]{Mil06} for a good review of this theory.
Applying $R \Gamma_{W}(S, \;\cdot\;)$ and cup product, we have a morphism
	\begin{equation} \label{eq: cup product pairing}
				R \Gamma_{W}(S, \mathcal{A})
			\tensor^{L}
				R \Gamma_{W}(S, \mathcal{B}^{0})
		\to
			R \Gamma_{W}(S, \Gm)[1]
	\end{equation}
in $D(\Ab)$.
By \cite[Prop.\ 7.4]{Gei04}, we have canonical isomorphisms
	\begin{equation} \label{eq: Weil etal cohom of Gm}
			H_{W}^{n}(S, \Gm)
		\cong
			\begin{cases}
				\F_{q}^{\times} & \text{if } n = 0, \\
				\Pic(S) & \text{if } n = 1, \\
				\Z & \text{if } n = 2, \\
				0 & \text{if } n \ge 3.
			\end{cases}
	\end{equation}
In particular, we have a canonical morphism
$R \Gamma_{W}(S, \Gm) \to \Z[-2]$.
This induces a morphism
	\begin{equation} \label{eq: derived duality pairing}
				R \Gamma_{W}(S, \mathcal{A})
			\tensor^{L}
				R \Gamma_{W}(S, \mathcal{B}^{0})
		\to
			\Z[-1].
	\end{equation}

\begin{Prop} \label{prop: duality}
	Assume that $\Sha(A)$ is finite
	(which implies finite generation of
	$H_{W}^{\ast}(S, \mathcal{A})$ and $H_{W}^{\ast}(S, \mathcal{B}^{0})$
	by Prop.\ \ref{prop: finite generation}).
	Then the morphism
		\[
				R \Gamma_{W}(S, \mathcal{A})
			\to
				R \Hom \bigl(
					R \Gamma_{W}(S, \mathcal{B}^{0}),
					\Z
				\bigr)[-1]
		\]
	induced by \eqref{eq: derived duality pairing} is an isomorphism in $D(\Ab)$.
	In particular, for any $n$, we have a perfect pairing
		\[
					H_{W}^{n}(S, \mathcal{A}) / \tor
				\times
					H_{W}^{1 - n}(S, \mathcal{B}^{0}) / \tor
			\to
				\Z
		\]
	of finite free abelian groups and a perfect pairing
		\[
					H_{W}^{n}(S, \mathcal{A})_{\tor}
				\times
					H_{W}^{2 - n}(S, \mathcal{B}^{0})_{\tor}
			\to
				\Q / \Z
		\]
	of finite abelian groups.
\end{Prop}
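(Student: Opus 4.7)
The strategy is to reduce the statement to the group-scheme-level duality of \cite[Prop.\ 4.2.10]{Suz19}, which furnishes a $G$-equivariant perfect pairing between the commutative group schemes $\mathbf{H}^{\ast}(S, \mathcal{A})$ and $\mathbf{H}^{\ast}(S, \mathcal{B}^{0})$. Passing to $\closure{\F_{q}}$-points via Prop.\ \ref{prop: cohom group schemes}\eqref{item: cohom group schemes: rational points} translates this into a $G$-equivariant derived pairing
\[
R \Gamma_{\et}(\closure{S}, \closure{\mathcal{A}}) \otimes^{L} R \Gamma_{\et}(\closure{S}, \closure{\mathcal{B}^{0}}) \to R \Gamma_{\et}(\closure{S}, \Gm)[1]
\]
in a suitable ind-pro derived category. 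Applying $R \Gamma(G, -)$ and composing with the trace $R \Gamma_{W}(S, \Gm) \to \Z[-2]$ produces, by definition, the pairing \eqref{eq: derived duality pairing}.

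It then suffices to show that the induced morphism $R \Gamma_{W}(S, \mathcal{A}) \to R \Hom(R \Gamma_{W}(S, \mathcal{B}^{0}), \Z)[-1]$ is an isomorphism in $D(\Ab)$. Since $G \cong \Z$ has cohomological dimension one, the exact sequence \eqref{eq: Frobenius inv and coinv and Weil etale cohomology} reduces this, degree by degree, to comparing the $G$-invariants $H^{n}_{\et}(\closure{S}, \closure{\mathcal{A}})^{G}$ and coinvariants $H^{n-1}_{\et}(\closure{S}, \closure{\mathcal{A}})_{G}$ with the corresponding invariants and coinvariants for $\mathcal{B}^{0}$ in complementary degree. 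I would handle the structural slices of $\mathbf{H}^{\ast}(S, \mathcal{A})$ identified in Prop.\ \ref{prop: cohom group schemes} separately: the finitely generated pieces $\mathbf{H}^{0}$ and $\mathbf{H}^{2}$, the perfection-of-algebraic-group quotient $\mathbf{H}^{1}/\divis$, and the divisible torsion part $\mathbf{H}^{1}_{\divis}$ together with its Tate module.

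The main obstacle is the collapse from the ind-pro formalism of \cite{Suz19} to honest abelian groups. Here the finiteness of $\Sha(A)$ enters through Prop.\ \ref{prop: geometric cohomology}\eqref{item: geometric cohomology: H one div and Sha}: it forces both $V(H^{1}_{\et}(\closure{S}, \closure{\mathcal{A}})_{\divis})^{G}$ and its coinvariants to vanish, and as in the proof of Prop.\ \ref{prop: finite generation} this makes the finite group $(H^{1}_{\et}(\closure{S}, \closure{\mathcal{A}})_{\divis})^{G}$ and the finite quotient of $T(H^{1}_{\et}(\closure{S}, \closure{\mathcal{B}^{0}})_{\divis})$ by $\phi - 1$ mutually Pontryagin dual. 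This produces the finite pairing on torsion parts; the pairing on torsion-free parts comes from the abelian-variety factor of $\mathbf{H}^{0}(S, \mathcal{A})$ paired against the Tate-module contribution to $H^{1}_{W}(S, \mathcal{B}^{0})/\tor$ via the N\'eron--Tate height, and the standard duality for two-term complexes of finitely generated abelian groups then assembles these ingredients into the stated derived isomorphism.
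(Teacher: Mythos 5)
Your proposal and the paper part ways at the very first step: in the paper, \cite[Prop.\ 4.2.10]{Suz19} \emph{is} already the Weil-\'etale duality being stated (the section explicitly ``recalls'' it), so the paper's proof is a one-line citation, with nothing to check beyond the fact that the pairing \eqref{eq: derived duality pairing} is the one induced by the Poincar\'e bundle and the trace $H_{W}^{2}(S,\Gm)\cong\Z$. You instead read that reference as a group-scheme-level duality between $\mathbf{H}^{\ast}(S,\mathcal{A})$ and $\mathbf{H}^{\ast}(S,\mathcal{B}^{0})$ (which is rather \cite[Thm.\ 3.4.1]{Suz19}, i.e.\ Prop.\ \ref{prop: cohom group schemes}) and try to re-derive the arithmetic statement from it by taking $G$-invariants/coinvariants. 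That strategy is legitimate in principle --- it is essentially how the cited proposition is proved in \cite{Suz19} --- but your sketch has concrete gaps where the real work lies.

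First, ``passing to $\closure{\F_{q}}$-points'' does not translate the geometric duality into a perfect pairing of abelian groups: the duality of \cite{Suz19} lives in the derived category of the ind-pro (perfect-site) formalism, and for instance the duality between the perfection-of-abelian-variety part of $\mathbf{H}^{0}$ and the divisible part of $\mathbf{H}^{1}$, or the Serre duality of unipotent pieces, is invisible on $\closure{\F_{q}}$-points. One must apply $R\Gamma(G,\;\cdot\;)$ inside that formalism and only afterwards use Lang's theorem and the finiteness of $\Sha(A)$ (via Prop.\ \ref{prop: geometric cohomology} \eqref{item: geometric cohomology: H one div and Sha}) to collapse everything to finitely generated groups; your ``translation'' step silently assumes this collapse. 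Second, the free-part analysis is wrong in substance: the rank-$r$ part of $H_{W}^{0}(S,\mathcal{A})=A(K)$ comes from the \'etale component group of $\mathbf{H}^{0}(S,\mathcal{A})$, not from its abelian-variety factor (whose $\closure{\F_{q}}$-points are all torsion), and invoking the N\'eron--Tate height here is both circular and insufficient --- the identification of the Weil-\'etale pairing with $h$ (up to $\log q$ and an index) is Schneider's theorem, which the paper uses only later in \S \ref{sec: Euler characteristics for Neron models}, and non-degeneracy of $h$ would at best give a finite-index statement, not the unimodular (perfect over $\Z$) pairing on $H_{W}^{n}(S,\mathcal{A})/\tor\times H_{W}^{1-n}(S,\mathcal{B}^{0})/\tor$ asserted in the proposition. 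So either cite \cite[Prop.\ 4.2.10]{Suz19} as the paper does, or supply the ind-pro bookkeeping and the correct matching of structural pieces; as written the argument does not close.
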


\begin{proof}
	This follows from \cite[Prop.\ 4.2.10]{Suz19}.
\end{proof}

%%%%%%%%%%%%%%%%%%%%%%%%%%%%%%%%%%%%%%%%%%%%%%%%%%%%%%%%%%%%%%%%%%%%%%%%%%%%%

\section{Euler characteristics for N\'eron models}
\label{sec: Euler characteristics for Neron models}

In this section, we assume that
	\[
		\# \Sha(A) < \infty,
	\]
so that the Weil-\'etale cohomology groups
$H_{W}^{\ast}(S, \mathcal{A})$ are finitely generated
by Prop.\ \ref{prop: finite generation}.
We relate $\chi(H_{W}^{\ast}(S, \mathcal{A}, e))$ to the product
	\[
			\frac{
				\# \Sha(A)
			}{
				\# A(K)_{\tor} \cdot \# B(K)_{\tor}
			}
		\cdot
			\frac{
				\Disc(h)
			}{
				(\log q)^{r}
			}
		\cdot
			c(A)
	\]
that appears in Cor.\ \ref{cor: BSD with bad Euler factors},
thereby finishing the proof of Thm.\ \ref{thm: BSD by Weil etale}.

As in \S \ref{sec: Review of Weil etale cohomology},
the cup product with $e \in H^{1}(G, \Z)$ turns the groups $H^{\ast}_{W}(S, \mathcal{A})$ into a complex
$(H_{W}^{\ast}(S, \mathcal{A}), e)$.
The rationalized complex $(H^{\ast}_{W}(S, \mathcal{A}) \tensor \Q, e)$ is exact
by the general result on uniquely divisible sheaves \cite[Cor.\ 5.2]{Gei04}.
Hence the cohomology groups of the complex $(H^{\ast}_{W}(S, \mathcal{A}), e)$ are finite.
Its Euler characteristic $\chi(H_{W}^{\ast}(S, \mathcal{A}), e)$ is thus well-defined.
On the other hand, the groups $H_{W}^{\ast}(S, \mathcal{A})_{\tor}$ are finite,
so that $\chi(H_{W}^{\ast}(S, \mathcal{A})_{\tor})$ also is well-defined.

\begin{Prop} \label{prop: Euler char of torsion part of Weil etale cohom}
	\[
			\chi(H_{W}^{\ast}(S, \mathcal{A})_{\tor})^{-1}
		=
				\frac{
					\# \Sha(A)
				}{
					\# A(K)_{\tor} \cdot \# B(K)_{\tor}
				}
			\cdot
				\frac{
					c(A)
				}{
					[B(K) / \tor : \mathcal{B}^{0}(S) / \tor]
				}.
	\]
\end{Prop}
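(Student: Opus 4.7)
The plan is to reduce the claim to a single Weil-\'etale identity by computing each factor of $\chi(H_{W}^{\ast}(S,\mathcal{A})_{\tor})$ separately. By Prop.\ \ref{prop: Weil etale cohom} \eqref{item: Weil etale cohom: cohom dimension} only the three degrees $n = 0, 1, 2$ contribute, so
\[
		\chi(H_{W}^{\ast}(S, \mathcal{A})_{\tor})^{-1}
	=
		\frac{\# H_{W}^{1}(S,\mathcal{A})_{\tor}}{\# H_{W}^{0}(S,\mathcal{A})_{\tor} \cdot \# H_{W}^{2}(S,\mathcal{A})_{\tor}}.
\]
The degree zero factor is $\#A(K)_{\tor}$, since $H_{W}^{0}(S,\mathcal{A}) = \mathcal{A}(S) = A(K)$ by the N\'eron mapping property. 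The degree two factor is handled by Prop.\ \ref{prop: duality}: the perfect pairing on torsion gives $\#H_{W}^{2}(S,\mathcal{A})_{\tor} = \#H_{W}^{0}(S,\mathcal{B}^{0})_{\tor} = \#\mathcal{B}^{0}(S)_{\tor}$.

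To put the target right-hand side of the proposition into the same form, I use the elementary fact that for finitely generated abelian groups $H' \subseteq H$ of equal rank, the four-term sequence $0 \to H_{\tor}/H'_{\tor} \to H/H' \to (H/\tor)/(H'/\tor) \to 0$ yields
\[
		[H/\tor : H'/\tor] \cdot \#H_{\tor}
	=
		\#(H/H') \cdot \#H'_{\tor}.
\]
Applied to $\mathcal{B}^{0}(S) \subseteq B(K)$, this converts $\#B(K)_{\tor} \cdot [B(K)/\tor : \mathcal{B}^{0}(S)/\tor]$ into $\#(B(K)/\mathcal{B}^{0}(S)) \cdot \#\mathcal{B}^{0}(S)_{\tor}$. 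Cancelling against the expression for $\chi^{-1}$ computed above, the proposition reduces to
\[
		\#H_{W}^{1}(S,\mathcal{A})_{\tor} \cdot \#\bigl(B(K)/\mathcal{B}^{0}(S)\bigr)
	=
		\#\Sha(A) \cdot c(A).
\]

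To establish this last identity, I will expand $H_{W}^{1}(S, \mathcal{A})_{\tor}$ via the Frobenius sequence \eqref{eq: Frobenius inv and coinv and Weil etale cohomology}
\[
		0
	\to
		\closure{\mathcal{A}}(\closure{S})_{G}
	\to
		H_{W}^{1}(S, \mathcal{A})
	\to
		H_{\et}^{1}(\closure{S}, \closure{\mathcal{A}})^{G}
	\to
		0
\]
combined with the divisible filtration $H_{\et}^{1}(\closure{S}, \closure{\mathcal{A}})_{\divis} \subseteq H_{\et}^{1}(\closure{S}, \closure{\mathcal{A}})$. Under finiteness of $\Sha(A)$, Prop.\ \ref{prop: cohom group schemes} \eqref{item: cohom group schemes: finiteness of Sha} together with Prop.\ \ref{prop: finite generation} makes every term here finitely generated and forces $(H_{\et}^{1}(\closure{S}, \closure{\mathcal{A}})_{\divis})_{G} = 0$, so $H_{W}^{1}(S, \mathcal{A})_{\tor}$ is a finite extension of $H_{\et}^{1}(\closure{S}, \closure{\mathcal{A}})^{G}$ by the torsion of $\closure{\mathcal{A}}(\closure{S})_{G}$. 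The leading contribution $(H_{\et}^{1}(\closure{S}, \closure{\mathcal{A}})_{\divis})^{G}$ is identified with $\Sha(A)$ through the Hochschild--Serre analysis of \cite[\S 4]{Suz19}, while the auxiliary pieces -- the component group of $\mathbf{H}^{0}(S, \mathcal{A})$ (Prop.\ \ref{prop: cohom group schemes} \eqref{item: cohom group schemes: H zero}), the finite quotient $(H_{\et}^{1}(\closure{S}, \closure{\mathcal{A}})/\divis)^{G}$, and the index $[B(K) : \mathcal{B}^{0}(S)]$ on the left -- are all controlled by the local component groups $\pi_{0}(\mathcal{A}_{v})$ and $\pi_{0}(\mathcal{B}_{v})$ on the two dual sides.

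The main obstacle is precisely this final numerical bookkeeping: showing that the component-group contributions on the $A$- and $B$-sides combine to give exactly the Tamagawa factor $c(A) = \prod_{v} \#\pi_{0}(\mathcal{A}_{v})(k(v))$ after accounting for the Mordell-Weil index $[B(K) : \mathcal{B}^{0}(S)]$. I plan to carry this out using the sheaf sequence $0 \to \mathcal{A}^{0} \to \mathcal{A} \to \bigoplus_{v} \pi_{0}(\mathcal{A}_{v}) \to 0$ on $S_{\et}$, its analog for $\mathcal{B}$, and the Cartier-type duality of the component groups encoded in the duality of $\mathbf{H}^{\ast}(S, \mathcal{A})$ with $\mathbf{H}^{\ast}(S, \mathcal{B}^{0})$ from \cite[Prop.\ 4.2.10]{Suz19}, which is exactly the tool needed to cancel the $B$-side $\pi_{0}$-factors against the $A$-side ones and leave behind the claimed product $\#\Sha(A) \cdot c(A)$.
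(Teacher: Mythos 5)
Your reduction is fine as far as it goes: degrees $0$ and $2$ are handled exactly as in the paper ($H_{W}^{0}(S,\mathcal{A}) = A(K)$, and $\# H_{W}^{2}(S,\mathcal{A})_{\tor} = \# \mathcal{B}^{0}(S)_{\tor}$ by Prop.\ \ref{prop: duality}), and the elementary index identity correctly reduces everything to
\[
		\# H_{W}^{1}(S,\mathcal{A})_{\tor} \cdot [B(K) : \mathcal{B}^{0}(S)]
	=
		\# \Sha(A) \cdot c(A).
\]
But this identity is precisely the heart of the proposition, and your proposal does not prove it; it only announces a plan ("I plan to carry this out using\dots"). Moreover, the sketch rests on two unjustified claims. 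First, the identification of $(H_{\et}^{1}(\closure{S}, \closure{\mathcal{A}})_{\divis})^{G}$ with $\Sha(A)$ is not established anywhere in the paper or in \cite{Suz19} in this form, and it is not correct as stated: $\Sha(A)$ is built from $H_{\et}^{1}(S,\mathcal{A})$ via localization and component-group corrections, and via Hochschild--Serre it receives contributions from $\closure{\mathcal{A}}(\closure{S})_{G}$ and from the full group $H_{\et}^{1}(\closure{S}, \closure{\mathcal{A}})^{G}$, not just from the invariants of the divisible part. Second, even granting structural control of each graded piece, the order of $H_{W}^{1}(S,\mathcal{A})_{\tor}$ is not the product of the orders of the torsion of $\closure{\mathcal{A}}(\closure{S})_{G}$ and of $H_{\et}^{1}(\closure{S}, \closure{\mathcal{A}})^{G}$: torsion is only left exact in the extension \eqref{eq: Frobenius inv and coinv and Weil etale cohomology}, and the defect (the image of torsion classes hitting the free part) is exactly the kind of term your "numerical bookkeeping" would have to control and does not.

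The paper avoids this difficulty entirely by working on the dual side: by Prop.\ \ref{prop: duality}, $\# H_{W}^{1}(S,\mathcal{A})_{\tor} = \# H_{W}^{1}(S,\mathcal{B}^{0})_{\tor}$; the sequence \eqref{eq: etale and Weil etale exact sequence} gives $H_{\et}^{1}(S,\mathcal{B}^{0})_{\tor} \isomto H_{W}^{1}(S,\mathcal{B}^{0})_{\tor}$; and Milne's exact sequence
\[
		0
	\to
		\mathcal{B}^{0}(S)
	\to
		B(K)
	\to
		\bigoplus_{v} \pi_{0}(\mathcal{B}_{v})(k(v))
	\to
		H_{\et}^{1}(S,\mathcal{B}^{0})
	\to
		\Sha(B)
	\to
		0
\]
computes $\# H_{\et}^{1}(S,\mathcal{B}^{0}) = \# \Sha(B) \cdot c(B) / [B(K) : \mathcal{B}^{0}(S)]$ in one stroke, after which $\# \Sha(B) = \# \Sha(A)$ (Cassels--Tate) and $c(B) = c(A)$ (perfectness of the Grothendieck pairing) finish the argument. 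If you want to salvage your approach, the cleanest fix is to replace your $A$-side Frobenius analysis by this dual-side computation; otherwise you must supply a genuine proof of the displayed identity, which your current outline does not.
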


\begin{proof}
	We have $H_{W}^{0}(S, \mathcal{A}) = A(K)$.
	Also the finite group $H_{W}^{2}(S, \mathcal{A})$ is Pontryagin dual to $\mathcal{B}^{0}(S)_{\tor}$
	by Prop.\ \ref{prop: duality}.
	
	We treat $H_{W}^{1}(S, \mathcal{A})$.
	By Prop.\ \ref{prop: duality},
	we have $\# H_{W}^{1}(S, \mathcal{A})_{\tor} = \# H_{W}^{1}(S, \mathcal{B}^{0})_{\tor}$.
	By \eqref{eq: etale and Weil etale exact sequence},
	we have a natural exact sequence
		\[
				0
			\to
				H_{\et}^{1}(S, \mathcal{B}^{0})
			\to
				H_{W}^{1}(S, \mathcal{B}^{0})
			\to
				\mathcal{B}^{0}(S) \tensor \Q.
		\]
	Hence $H_{\et}^{1}(S, \mathcal{B}^{0})_{\tor} \isomto H_{W}^{1}(S, \mathcal{B}^{0})_{\tor}$.
	By \cite[III, Prop.\ 9.2]{Mil06},
	we have a natural exact sequence
		\[
				0
			\to
				\mathcal{B}^{0}(S)
			\to
				B(K)
			\to
				\bigoplus_{v}
					\pi_{0}(\mathcal{B}_{v})(k(v))
			\to
				H_{\et}^{1}(S, \mathcal{B}^{0})
			\to
				\Sha(B)
			\to
				0.
		\]
	Hence $H_{\et}^{1}(S, \mathcal{B}^{0})$ is finite and
		\[
				\# H_{\et}^{1}(S, \mathcal{B}^{0})
			=
				\frac{
						\# \Sha(B)
					\cdot
						c(B)
				}{
					[B(K) : \mathcal{B}^{0}(S)]
				}.
		\]
	We have $\# \Sha(B) = \# \Sha(A)$
	by the perfectness of the Cassels-Tate pairing \cite[III, Cor.\ 9.5]{Mil06}
	and $c(B) = c(A)$ by the perfectness of the Grothendieck pairing \cite[III, Thm.\ 7.11]{Mil06}.
	Thus
		\[
				\# H_{W}^{1}(S, \mathcal{A})_{\tor}
			=
					\frac{
						\# \Sha(A) \cdot c(A)
					}{
						[B(K) : \mathcal{B}^{0}(S)]
					}.
		\]
	
	Therefore
		\begin{align*}
					\chi(H_{W}^{\ast}(S, \mathcal{A})_{\tor})^{-1}
			&	=
					\frac{
						\# \Sha(A)
					}{
						\# A(K)_{\tor} \cdot \# \mathcal{B}^{0}(S)_{\tor}
					}
				\cdot
					\frac{
						c(A)
					}{
						[B(K) : \mathcal{B}^{0}(S)]
					}
			\\
			&	=
					\frac{
						\# \Sha(A)
					}{
						\# A(K)_{\tor} \cdot \# B(K)_{\tor}
					}
				\cdot
					\frac{
						c(A)
					}{
						[B(K) / \tor : \mathcal{B}^{0}(S) / \tor]
					}.
		\end{align*}
\end{proof}

As in \S \ref{sec: Duality},
let $P$ be the Poincar\'e bundle on $A \times_{K} B$
and $\mathcal{P}$ its canonical extension to $\mathcal{A} \times_{S} \mathcal{B}^{0}$.
By pullback, we have a pairing on
$\mathcal{A}(S) \times \mathcal{B}^{0}(S) = A(K) \times \mathcal{B}^{0}(S)$
with values in $\Pic(S)$.
\newcommand{\cuppair}{\langle, \rangle}
Let $\pairing{\;}{\;}$ be the pairing defined by the composite of the maps
	\[
			A(K) \times \mathcal{B}^{0}(S)
		\to
			\Pic(S)
		\to
			\Z,
	\]
where the last map is the degree map.
This pairing $\pairing{\;}{\;}$ is non-degenerate modulo torsion subgroups
by \cite[Lem.\ 9 iii), Satz 11]{Sch82}.

\begin{Prop} \label{prop: Euler char of torsion free quotient of Weil etale cohom}
	\[
			\chi(H_{W}^{\ast}(S, \mathcal{A}) / \tor, e)^{-1}
		=
			\Disc(\pairing{\;}{\;}).
	\]
\end{Prop}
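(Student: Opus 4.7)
The plan is to reduce the complex $(H_{W}^{\ast}(S, \mathcal{A}) / \tor, e)$ to a two-term complex, reinterpret cup product with $e$ via duality as a bilinear pairing on $A(K)/\tor \times \mathcal{B}^{0}(S)/\tor$, and then match it with $\pairing{\;}{\;}$.

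First, by Prop.\ \ref{prop: Weil etale cohom}, we have $H_{W}^{n}(S, \mathcal{A}) = 0$ for $n \notin \{0, 1, 2\}$ and $H_{W}^{2}(S, \mathcal{A})$ is torsion. Hence $(H_{W}^{\ast}(S, \mathcal{A}) / \tor, e)$ is concentrated in degrees $0$ and $1$, reducing to
\[
	A(K)/\tor \overset{e}{\to} H_{W}^{1}(S, \mathcal{A}) / \tor,
\]
with both terms finite free $\Z$-modules of rank $r$ by Prop.\ \ref{prop: duality}. Since the rationalized complex $(H_{W}^{\ast}(S, \mathcal{A}) \tensor \Q, e)$ is exact, the map $e$ is injective with finite cokernel, giving
\[
	\chi(H_{W}^{\ast}(S, \mathcal{A}) / \tor, e)^{-1} = \# \Coker(e).
\]

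Next, I would use Prop.\ \ref{prop: duality} to identify $H_{W}^{1}(S, \mathcal{A}) / \tor$ with $\Hom(\mathcal{B}^{0}(S) / \tor, \Z)$. Under this identification, cup product with $e$ corresponds to the bilinear pairing
\[
	\langle a, b \rangle_{e} = (e \cup a) \cup b \in H_{W}^{2}(S, \Gm) = \Z
\]
on $A(K)/\tor \times \mathcal{B}^{0}(S)/\tor$, where the cup product uses the map \eqref{eq: cup product pairing} coming from the Poincar\'e bundle $\mathcal{P}$. Standard linear algebra then gives $\# \Coker(e) = \Disc(\langle \, , \rangle_{e})$, so the proof reduces to the equality $\langle \, , \rangle_{e} = \pairing{\;}{\;}$.

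To obtain this equality, I would invoke associativity of the cup product:
\[
	(e \cup a) \cup b = e \cup (a \cup b),
\]
and observe that $a \cup b \in H_{W}^{1}(S, \Gm) = \Pic(S)$ is, by the construction of \eqref{eq: cup product pairing}, the class of the pullback line bundle $(a, b)^{\ast} \mathcal{P}$, whose degree is $\pairing{a}{b}$ by definition. The equality $\langle \, , \rangle_{e} = \pairing{\;}{\;}$ therefore reduces to showing that the homomorphism $\Pic(S) = H_{W}^{1}(S, \Gm) \overset{e}{\to} H_{W}^{2}(S, \Gm) = \Z$ coincides with the degree map. I expect this compatibility to be the main technical obstacle: it requires unwinding \eqref{eq: cup with e by geometric cohom} in light of the identifications \eqref{eq: Weil etal cohom of Gm} from \cite[Prop.\ 7.4]{Gei04}. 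Concretely, the degree sequence $0 \to J(\closure{\F_{q}}) \to \Pic(\closure{S}) \to \Z \to 0$ (with $J$ the Jacobian of $S$) splits off a trivially acted-upon $\Z$ quotient whose $G$-coinvariants contribute the $\Z$ summand of $H_{W}^{2}(S, \Gm)$, and the composite $\Pic(S) \to \Pic(\closure{S})^{G} \to \Z_{G} = \Z$ coming from \eqref{eq: cup with e by geometric cohom} is then seen to be the degree map. Granting this, the proposition follows.
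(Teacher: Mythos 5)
Your proposal is correct and follows essentially the same route as the paper: the paper's proof is precisely the commutative diagram encoding your associativity identity $(e\cup a)\cup b = e\cup(a\cup b)$ together with the identification of $e\colon H_{W}^{1}(S,\Gm)\to H_{W}^{2}(S,\Gm)$ with the degree map, after which $e$ on $H_{W}^{0}(S,\mathcal{A})/\tor$ becomes the injection $A(K)/\tor\into\Hom(\mathcal{B}^{0}(S)/\tor,\Z)$ induced by $\pairing{\;}{\;}$. Your sketch of the degree-map compatibility via $H_{W}^{2}(S,\Gm)\cong\Pic(\closure{S})_{G}\cong\Z$ is the correct way to unwind what the paper takes from \eqref{eq: cup with e by geometric cohom} and \eqref{eq: Weil etal cohom of Gm}.
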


\begin{proof}
	Since $H_{W}^{2}(S, \mathcal{A})$ is torsion
	by Prop.\ \ref{prop: Weil etale cohom} \eqref{item: Weil etale cohom: H two and geome H one},
	the only relevant morphism for the left-hand side is
	$e \colon H_{W}^{0}(S, \mathcal{A}) / \tor \to H_{W}^{1}(S, \mathcal{A}) / \tor$.
	By \eqref{eq: cup with e by geometric cohom}, \eqref{eq: cup product pairing}
	and \eqref{eq: Weil etal cohom of Gm},
	we have a commutative diagram
		\[
			\begin{CD}
					H_{W}^{0}(S, \mathcal{A}) \times H_{W}^{0}(S, \mathcal{B}^{0})
				@>>>
					H_{W}^{1}(S, \Gm)
				@=
					\Pic(S)
				\\
				@VV e \times \id V
				@VV e V
				@VV \deg V
				\\
					H_{W}^{1}(S, \mathcal{A}) \times H_{W}^{0}(S, \mathcal{B}^{0})
				@>>>
					H_{W}^{2}(S, \Gm)
				@=
					\Z.
			\end{CD}
		\]
	The upper pairing gives $\pairing{\;}{\;}$.
	The lower pairing modulo torsion subgroups is perfect
	by Prop.\ \ref{prop: duality}.
	Therefore the homomorphism
	$e \colon H_{W}^{0}(S, \mathcal{A})_{/ \tor} \to H_{W}^{1}(S, \mathcal{A})_{/ \tor}$
	can be identified with the injective homomorphism
	$A(K) / \tor \into \Hom(\mathcal{B}^{0}(S) / \tor, \Z)$ given by $\pairing{\;}{\;}$.
	This implies the result.
\end{proof}

\begin{Prop}
	\[
			\chi(H_{W}^{\ast}(S, \mathcal{A}), e)^{-1}
		=
			\frac{
				\# \Sha(A)
			}{
				\# A(K)_{\tor} \cdot \# B(K)_{\tor}
			}
		\cdot
			\frac{
				\Disc(h)
			}{
				(\log q)^{r}
			}
		\cdot
			c(A).
	\]
\end{Prop}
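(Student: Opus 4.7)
The plan is to combine the two previous propositions (Prop.\ \ref{prop: Euler char of torsion part of Weil etale cohom} and Prop.\ \ref{prop: Euler char of torsion free quotient of Weil etale cohom}) via a short exact sequence of complexes, and then identify the cup-product pairing $\pairing{\;}{\;}$ with the N\'eron-Tate height $h$ up to normalization.

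First, I would observe that the filtration by torsion subgroups gives a short exact sequence of complexes
\[
		0
	\to
		(H_{W}^{\ast}(S, \mathcal{A})_{\tor}, e)
	\to
		(H_{W}^{\ast}(S, \mathcal{A}), e)
	\to
		(H_{W}^{\ast}(S, \mathcal{A}) / \tor, e)
	\to
		0
\]
since $e$ preserves torsion subgroups. All three complexes have finite cohomology (the outer two as noted in Prop.\ \ref{prop: Euler char of torsion free quotient of Weil etale cohom}, with the torsion complex even having all terms finite), and multiplicativity of Euler characteristics in the associated long exact sequence yields
\[
		\chi(H_{W}^{\ast}(S, \mathcal{A}), e)
	=
			\chi(H_{W}^{\ast}(S, \mathcal{A})_{\tor})
		\cdot
			\chi(H_{W}^{\ast}(S, \mathcal{A}) / \tor, e),
\]
using also that the Euler characteristic of a complex of finite groups equals that of the underlying graded object.

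Next, I would substitute the two previous propositions, reducing the desired identity to
\[
		\frac{
			\Disc(\pairing{\;}{\;})
		}{
			[B(K) / \tor : \mathcal{B}^{0}(S) / \tor]
		}
	=
		\frac{\Disc(h)}{(\log q)^{r}}.
\]
The key input here, which is the main obstacle, is the comparison between the cup-product pairing $\pairing{\;}{\;} \colon A(K) \times \mathcal{B}^{0}(S) \to \Z$ built from the Poincar\'e bundle and the degree map on $\Pic(S)$, and the N\'eron-Tate height pairing $h \colon A(K) \times B(K) \to \R$. In the function field setting this comparison is classical; the relation is
\[
		h(P, Q)
	=
		(\log q) \cdot \pairing{P}{Q}
	\qquad
	\text{for }
	P \in A(K),\, Q \in \mathcal{B}^{0}(S),
\]
which follows from the definition of $h$ as a sum of local heights together with Schneider's identification \cite[Lem.\ 9 iii), Satz 11]{Sch82} cited just before Prop.\ \ref{prop: Euler char of torsion free quotient of Weil etale cohom}. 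I would record this as the essential geometric input.

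Finally, granting this comparison, the discriminant calculation is routine. Restricting $h$ to the finite-index sublattice $\mathcal{B}^{0}(S) / \tor \subset B(K) / \tor$ multiplies the discriminant by $[B(K) / \tor : \mathcal{B}^{0}(S) / \tor]$, while rescaling $\pairing{\;}{\;}$ by $\log q$ in the second argument contributes $(\log q)^{r}$. Combining,
\[
		\Disc(\pairing{\;}{\;})
	=
		\frac{
				\Disc(h)
			\cdot
				[B(K) / \tor : \mathcal{B}^{0}(S) / \tor]
		}{
			(\log q)^{r}
		},
\]
which gives the required identity after substitution and cancellation of the $[B(K)/\tor : \mathcal{B}^{0}(S)/\tor]$ factor present in Prop.\ \ref{prop: Euler char of torsion part of Weil etale cohom}. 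The hard part is thus not the bookkeeping but the input that the cup-product pairing via $\mathcal{P}$ and $\deg$ agrees with the N\'eron-Tate height up to the single factor $\log q$; everything else is formal from the preceding two propositions.
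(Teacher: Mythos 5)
Your proposal is correct and follows essentially the same route as the paper: it, too, multiplies the two preceding propositions using $\chi(H_{W}^{\ast}(S,\mathcal{A}),e)=\chi(H_{W}^{\ast}(S,\mathcal{A})_{\tor})\cdot\chi(H_{W}^{\ast}(S,\mathcal{A})/\tor,e)$ (which the paper cites to the proof of \cite[Thm.\ 9.1]{Gei04} and you verify directly via the short exact sequence of complexes) together with Schneider's relation $\Disc(h)/(\log q)^{r}=\Disc(\pairing{\;}{\;})/[B(K)/\tor:\mathcal{B}^{0}(S)/\tor]$, which the paper quotes from \cite{Sch82} and you recover from the pointwise identity $h=(\log q)\,\pairing{\;}{\;}$ on $A(K)\times\mathcal{B}^{0}(S)$. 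These are only presentational differences, so your argument matches the paper's proof.
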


\begin{proof}
	By the displayed equation right before \cite[Theorem]{Sch82},
	we have
		\[
				\frac{\Disc(h)}{(\log q)^{r}}
			=
				\frac{
					\Disc(\pairing{\;}{\;})
				}{
					[B(K) / \tor : \mathcal{B}^{0}(S) / \tor]
				}.
		\]
	Also we have
		\begin{align*}
					\chi(H_{W}^{\ast}(S, \mathcal{A}), e)
			&	=
						\chi(H_{W}^{\ast}(S, \mathcal{A})_{\tor}, e)
					\cdot
						\chi(H_{W}^{\ast}(S, \mathcal{A}) / \tor, e)
			\\
			&	=
						\chi(H_{W}^{\ast}(S, \mathcal{A})_{\tor})
					\cdot
						\chi(H_{W}^{\ast}(S, \mathcal{A}) / \tor, e);
		\end{align*}
	see the proof of \cite[Thm.\ 9.1]{Gei04}.
	Hence the result follows from the previous two propositions.
\end{proof}

\begin{Prop}
	The formula in Conj.\ \ref{conj: Tate BSD}
	or \ref{conj: Kato Trihan BSD} is equivalent to the formula
		\[
				\lim_{s \to 1}
					\frac{L(A, s)}{(1 - q^{1 - s})^{r}}
			=
				\chi(H_{W}^{\ast}(S, \mathcal{A}), e)^{-1}
				\cdot q^{\chi(S, \Lie(\mathcal{A}))}.
		\]
\end{Prop}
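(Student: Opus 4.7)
The plan is to derive the equivalence essentially by substitution, combining Corollary \ref{cor: BSD with bad Euler factors} with the preceding proposition. First I would invoke Corollary \ref{cor: BSD with bad Euler factors}, which reformulates the BSD formula of either Conj.\ \ref{conj: Tate BSD} or Conj.\ \ref{conj: Kato Trihan BSD} as
\[
\lim_{s \to 1} \frac{L(A, s)}{(1 - q^{1 - s})^{r}}
= \frac{\# \Sha(A)}{\# A(K)_{\tor} \cdot \# B(K)_{\tor}}
\cdot \frac{\Disc(h)}{(\log q)^{r}}
\cdot c(A)
\cdot q^{\chi(S, \Lie(\mathcal{A}))}.
\]
Since this already exhibits the correct left-hand side and the factor $q^{\chi(S, \Lie(\mathcal{A}))}$, the task is reduced to showing that the remaining classical product equals $\chi(H_{W}^{\ast}(S, \mathcal{A}), e)^{-1}$.

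This second step is exactly the content of the preceding proposition, which asserts that under the hypothesis $\# \Sha(A) < \infty$,
\[
\chi(H_{W}^{\ast}(S, \mathcal{A}), e)^{-1}
= \frac{\# \Sha(A)}{\# A(K)_{\tor} \cdot \# B(K)_{\tor}}
\cdot \frac{\Disc(h)}{(\log q)^{r}}
\cdot c(A).
\]
I would therefore substitute this identity into the reformulated corollary to convert its right-hand side into the desired expression $\chi(H_{W}^{\ast}(S, \mathcal{A}), e)^{-1} \cdot q^{\chi(S, \Lie(\mathcal{A}))}$.

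The only point requiring careful attention is the finiteness hypothesis on $\Sha(A)$ needed to invoke the preceding proposition. I would justify this symmetrically in both directions of the claimed equivalence: the BSD formulation in Conj.\ \ref{conj: Tate BSD} or \ref{conj: Kato Trihan BSD} includes the finiteness of $\Sha(A)$ as an explicit hypothesis, while the target formula implicitly requires it because the Euler characteristic $\chi(H_{W}^{\ast}(S, \mathcal{A}), e)^{-1}$ is only defined when the groups $H_{W}^{\ast}(S, \mathcal{A})$ are finitely generated, and this finite generation is equivalent to the finiteness of $\Sha(A)$ by Prop.\ \ref{prop: finite generation}. With this observation in place, the preceding proposition applies in either direction and the equivalence is immediate.

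Because both supporting results have already been established, no substantive obstacle remains; the proof is essentially bookkeeping. The mildest subtlety is the verification just sketched that the finiteness of $\Sha(A)$ is present on both sides of the claimed equivalence, so that the preceding proposition can be invoked without circularity.
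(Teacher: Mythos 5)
Your proof is correct and matches the paper's argument exactly: the paper also deduces this by combining Cor.\ \ref{cor: BSD with bad Euler factors} with the immediately preceding proposition expressing $\chi(H_{W}^{\ast}(S, \mathcal{A}), e)^{-1}$ in classical terms. Your extra remark about the finiteness of $\Sha(A)$ is sound bookkeeping; in the paper it is handled by the standing assumption $\#\Sha(A)<\infty$ in that section together with Prop.\ \ref{prop: finite generation}.
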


\begin{proof}
	This follows from the previous proposition
	and Cor.\ \ref{cor: BSD with bad Euler factors}
\end{proof}

Now Thm.\ \ref{thm: BSD by Weil etale} is a consequence of this proposition
and the result of Kato-Trihan \cite[Chap.\ I, Thm.]{KT03}.

%%%%%%%%%%%%%%%%%%%%%%%%%%%%%%%%%%%%%%%%%%%%%%%%%%%%%%%%%%%%%%%%%%%%%%%%%%%%%%%%%%%%%%%%%%

\section{Integral models for $l$-adic and $p$-adic cohomology}
\label{sec: Integral models for l adic and p adic cohomology}

In this section,
we will see that the Weil-\'etale cohomology $H_{W}^{\ast}(S, \mathcal{A})$ is an integral model
for the corresponding $l$-adic and $p$-adic cohomology theory
if $\Sha(A)$ is finite.
This is a N\'eron model version of the corresponding result \cite[Thm.\ 8.4]{Gei04}
for motivic Tate twists $\Z(n)$.
We follow Jannsen's adic formalism \cite{Jan88}.

We need some notation about inverse limits.
Let $l$ be a prime number that may be equal to $p$.
Let $S_{\fppf}$ be the fppf site of $S$.
Let $\Ab(S_{\fppf})^{\N}$ be the category of inverse systems in $\Ab(S_{\fppf})$
indexed by positive integers with the usual ordering.
It has enough injectives (\cite[(1.1 a)]{Jan88}).
As in \cite[\S 3]{Jan88} (adapted to the fppf site),
the functor $\Ab(S_{\fppf})^{\N} \to \Ab$ given by
$\{\mathcal{F}_{i}\} \mapsto \invlim_{i} \Gamma(S, \mathcal{F}_{i})$
is denoted by $\Gamma(S, \{\mathcal{F}_{i}\})$,
with right derived functors $H_{\fppf}^{n}(S, \{\mathcal{F}_{i}\})$ and
total right derived functor $R \Gamma_{\fppf}(S, \{\mathcal{F}_{i}\})$.
A system $\{\mathcal{F}_{i}\}$ is said to be ML-zero (Mittag-Leffler zero; \cite[(1.10)]{Jan88})
if for any $i \ge 1$, there exists an integer $j = j(i) \ge 1$ such that
the transition morphism $\mathcal{F}_{i + j} \to \mathcal{F}_{i}$ is zero.
In this case, we have $R \Gamma_{\fppf}(S, \{\mathcal{F}_{i}\}) = 0$ (use \cite[(1.11), (3.1)]{Jan88}).
The ML-zero systems form a Serre subcategory of $\Ab(S_{\fppf})^{\N}$ (\cite[(1.12)]{Jan88}).
Two objects of $\Ab(S_{\fppf})^{\N}$ are said to be ML-isomorphic
if they are isomorphic in the quotient category of $\Ab(S_{\fppf})^{\N}$ by ML-zero systems.

As in \cite[(5.1)]{Jan88},
define a functor $\underline{T}_{l} \colon \Ab(S_{\fppf}) \to \Ab(S_{\fppf})^{\N}$
by sending a sheaf $\mathcal{F}$ to the inverse system of sheaves
	\[
			\cdots
		\overset{l}{\to}
			{}_{l^{2}} \mathcal{F}
		\overset{l}{\to}
			{}_{l} \mathcal{F},
	\]
where ${}_{l^{i}} \mathcal{F}$ means the kernel of multiplication by $l^{i}$ on $\mathcal{F}$.
By \cite[(5.1 a)]{Jan88},
for any $\mathcal{F} \in \Ab(S_{\fppf})$,
we have a canonical isomorphism
	\begin{equation} \label{eq: first derived functor of formal Tate module}
			R^{1} \underline{T}_{l} \mathcal{F}
		\cong
			\{ \mathcal{F} \tensor \Z / l^{i} \Z\}_{i},
	\end{equation}
where the transition morphisms are the natural reduction morphisms,
and $R^{n} \underline{T}_{l} \mathcal{F} = 0$ for $n \ge 2$.
As before, let $T_{l} = \Hom(\Q_{l} / \Z_{l}, \;\cdot\;) \colon \Ab \to \Ab$ be
the $l$-adic Tate module functor.
The natural isomorphism
	$
			\Gamma(S, \underline{T}_{l}(\mathcal{F}))
		\cong
			T_{l}(\Gamma(S, \mathcal{F}))
	$
induces a canonical isomorphism
	\[
			R \Gamma_{\fppf}(S, R \underline{T}_{l}(\mathcal{F}))
		\cong
			R T_{l}(R \Gamma_{\fppf}(S, \mathcal{F}))
	\]
in $D(\Ab)$ for $F \in \Ab(S_{\fppf})$
by \cite[(5.2), (5.4)]{Jan88}.
The same definitions and statements hold for the \'etale site $S_{\et}$.
We use similar notation $\underline{T}_{l} \colon \Ab(S_{\et}) \to \Ab(S_{\et})^{\N}$,
$R \Gamma_{\et}(S, \{\mathcal{F}_{i}\})$ etc.\ for the \'etale versions.

For $C^{\cdot} \in D^{+}(\Ab)$,
we have
	\[
		R T_{l}(C^{\cdot})[1] \cong R \Hom(\Q_{l} / \Z_{l}, C^{\cdot})[1].
	\]
We have a natural morphisms
$C^{\cdot} \to R T_{l}(C^{\cdot})[1]$
by applying $R \Hom(\;\cdot\;, C^{\cdot})$ to the morphisms $\Q_{l} / \Z_{l}[-1] \to \Q / \Z[-1] \to \Z$.
This induces a natural morphism
	\[
		C^{\cdot} \tensor \Z_{l} \to R T_{l}(C^{\cdot})[1]
	\]
since $R T_{l}(C^{\cdot})$ is represented by a complex of $\Z_{l}$-modules.
(Note that $\Z_{l}$ is flat and hence the functor $(\;\cdot\;) \tensor \Z_{l}$ is exact
inducing a triangulated functor on the derived categories.)
The above morphism is an isomorphism if $C^{\cdot}$ has finitely generated cohomology groups.
On the other hand, if $C^{\cdot}$ has uniquely divisible cohomology groups,
then $R T_{l}(C^{\cdot}) = 0$.

Let $\varepsilon \colon S_{\fppf} \to S_{\et}$ be the morphism of sites
defined by the identity functor on the underlying categories.
Combining all the above and \eqref{eq: etale and Weil etale exact sequence},
we have for any $\mathcal{F} \in \Ab(S_{\fppf})$ a natural morphism and isomorphisms
	\begin{align*}
				R \Gamma_{W}(S, R \varepsilon_{\ast} \mathcal{F}) \tensor \Z_{l}
		&	\to
				R T_{l}(R \Gamma_{W}(S, R \varepsilon_{\ast} \mathcal{F}))[1]
		\\
		&	\isomfrom
				R T_{l}(R \Gamma_{\et}(S, R \varepsilon_{\ast} \mathcal{F}))[1]
		\\
		&	\cong
				R T_{l}(R \Gamma_{\fppf}(S, \mathcal{F}))[1]
		\\
		&	\cong
				R \Gamma_{\fppf}(S, R \underline{T}_{l}(\mathcal{F}))[1].
	\end{align*}
The first morphism is an isomorphism
if the groups $H_{W}^{\ast}(S, R \varepsilon_{\ast} \mathcal{F})$ are finitely generated.
If $\mathcal{F}$ is represented by a smooth group scheme,
then its fppf cohomology agrees with the \'etale cohomology (\cite[III, Rmk.\ 3.11 (b)]{Mil80}),
so $R \varepsilon_{\ast} \mathcal{F} = \mathcal{F}$.
With Prop.\ \ref{prop: finite generation}, we have the following.

\begin{Prop} \label{prop: integral models in derived setting}
	For any prime number $l$,
	we have canonical morphisms
		\begin{gather*}
					R \Gamma_{W}(S, \mathcal{A}) \tensor \Z_{l}
				\to
					R \Gamma_{\fppf}(S, R \underline{T}_{l}(\mathcal{A}))[1],
			\\
					R \Gamma_{W}(S, \mathcal{A}^{0}) \tensor \Z_{l}
				\to
					R \Gamma_{\fppf}(S, R \underline{T}_{l}(\mathcal{A}^{0}))[1].
		\end{gather*}
	The right-hand sides are canonically isomorphic to
	$R \Gamma_{\et}(S, R \underline{T}_{l}(\mathcal{A}))[1]$,
	$R \Gamma_{\et}(S, R \underline{T}_{l}(\mathcal{A}^{0}))[1]$, respectively,
	if $l \ne p$.
	These morphisms are isomorphisms if $\Sha(A)$ is finite.
\end{Prop}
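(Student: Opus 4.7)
The plan is to specialize to $\mathcal{F} = \mathcal{A}$ and $\mathcal{F} = \mathcal{A}^{0}$ the chain of morphisms $R \Gamma_{W}(S, R \varepsilon_{\ast} \mathcal{F}) \tensor \Z_{l} \to R \Gamma_{\fppf}(S, R \underline{T}_{l}(\mathcal{F}))[1]$ constructed in the paragraph immediately preceding the proposition. Since $\mathcal{A}$ and $\mathcal{A}^{0}$ are smooth group schemes over $S$, one has $R \varepsilon_{\ast} \mathcal{A} = \mathcal{A}$ and $R \varepsilon_{\ast} \mathcal{A}^{0} = \mathcal{A}^{0}$ in $D^{+}(S_{\et})$, so the chain directly yields the two canonical morphisms stated.

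For the identification with \'etale cohomology when $l \ne p$: the multiplication-by-$l^{i}$ kernels ${}_{l^{i}} \mathcal{A}$ and ${}_{l^{i}} \mathcal{A}^{0}$ are quasi-finite \'etale over $S$, hence representable by \'etale sheaves. Their fppf and \'etale cohomologies therefore agree term by term in the inverse system defining $R \underline{T}_{l}$, which gives isomorphisms $R \Gamma_{\fppf}(S, R \underline{T}_{l}(\mathcal{A})) \cong R \Gamma_{\et}(S, R \underline{T}_{l}(\mathcal{A}))$ and similarly for $\mathcal{A}^{0}$.

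To prove that the morphism is an isomorphism under finiteness of $\Sha(A)$, I invoke Prop.\ \ref{prop: finite generation}: the hypothesis forces the groups $H_{W}^{\ast}(S, \mathcal{A})$ and $H_{W}^{\ast}(S, \mathcal{A}^{0})$ to be finitely generated. The preceding discussion observes that the first arrow $C^{\cdot} \tensor \Z_{l} \to R T_{l}(C^{\cdot})[1]$ in the chain is then an isomorphism (coming from the standard identifications $\Hom(\Q_{l} / \Z_{l}, M) = 0$ and $\Ext^{1}(\Q_{l} / \Z_{l}, M) = M \tensor \Z_{l}$ for finitely generated $M$), while the remaining arrows are unconditional isomorphisms. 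The composite is therefore an isomorphism. No substantive obstacle arises: the proof assembles the general framework established just before the statement, with smoothness of the N\'eron model and the finite generation result of \S \ref{sec: Finite generation for Neron model coefficients} as the key inputs.
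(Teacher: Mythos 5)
Your proposal is correct and matches the paper's own argument, which consists precisely of specializing the chain of morphisms constructed in the paragraph preceding the proposition to the smooth sheaves $\mathcal{A}$ and $\mathcal{A}^{0}$ (so that $R \varepsilon_{\ast} \mathcal{A} = \mathcal{A}$ and $R \varepsilon_{\ast} \mathcal{A}^{0} = \mathcal{A}^{0}$), noting that the first arrow is an isomorphism for complexes with finitely generated cohomology, and invoking Prop.\ \ref{prop: finite generation}. One small caveat on your $l \ne p$ step: $R \Gamma_{\fppf}(S, R \underline{T}_{l}(\mathcal{A}))$ is not literally the continuous cohomology of the system of kernels alone, since the component groups make $R^{1} \underline{T}_{l}(\mathcal{A})$ nonzero, so it is cleaner to observe that all the sheaves appearing in $R \underline{T}_{l}(\mathcal{A})$ are representable by smooth (\'etale) group schemes when $l \ne p$, or simply that both sides are identified with $R T_{l}(R \Gamma_{\et}(S, \mathcal{A}))[1]$ by Jannsen's comparison applied on each site --- a point the paper likewise leaves implicit.
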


We can give a more explicit description of the objects $R \underline{T}_{l}(\mathcal{A})$
and $R \underline{T}_{l}(\mathcal{A}^{0})$
and their fppf cohomology in some cases as follows.

\begin{Prop} \label{prop: integral models in abelian setting}
	For each place $v$, let $i_{v} \colon \Spec k(v) \into S$ denote the inclusion morphism.
	Assume that $l \ne p$ or $A$ has semistable reduction everywhere.
	\begin{enumerate}
	\item \label{item: cohom of Tate module: each degree}
		We have canonical ML-isomorphisms
			\begin{gather*}
							\underline{T}_{l}(\mathcal{A})
						\cong
							\underline{T}_{l}(\mathcal{A}^{0}),
					\quad
							R^{1} \underline{T}_{l}(\mathcal{A}^{0})
						=
							0,
				\\
						R^{1} \underline{T}_{l}(\mathcal{A})
					\cong
						\bigoplus_{v}
							i_{v \ast} \pi_{0}(\mathcal{A}_{v}) \tensor \Z_{l}
						\quad \text{(a constant system)},
			\end{gather*}
		\item \label{item: cohom of Tate module: finiteness}
			The group $H_{\fppf}^{n}(S, {}_{l^{i}}(\mathcal{A}^{0}))$ is finite
			for any $n \ge 0$ and $i \ge 1$.
		\item \label{item: cohom of Tate module: by naive l adic cohomology}
			The natural morphism from
			$H_{\fppf}^{n}(S, \underline{T}_{l}(\mathcal{A}^{0}))$ to
				$
					\invlim_{i}
						H_{\fppf}^{n}(S, {}_{l^{i}}(\mathcal{A}^{0}))
				$
			is an isomorphism.
		\item \label{item: cohom of Tate module: exact sequence}
			We have a long exact sequence
				\begin{align*}
							\cdots
					&	\to
							H_{\fppf}^{n}(S, \underline{T}_{l}(\mathcal{A}^{0}))
						\to
							H_{\fppf}^{n}(S, R \underline{T}_{l}(\mathcal{A}))
						\to
							\bigoplus_{v}
								H_{\et}^{n - 1}(k(v), \pi_{0}(\mathcal{A}_{v})) \tensor \Z_{l}
					\\
					&	\to
							H_{\fppf}^{n + 1}(S, \underline{T}_{l}(\mathcal{A}^{0}))
						\to
							\cdots.
				\end{align*}
	\end{enumerate}
\end{Prop}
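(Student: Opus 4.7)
The plan is to derive everything from the short exact sequence
\[
    0 \to \mathcal{A}^{0} \to \mathcal{A} \to \bigoplus_{v} i_{v \ast} \pi_{0}(\mathcal{A}_{v}) \to 0
\]
in $\Ab(S_{\fppf})$, combined with the hypothesis that multiplication by $l^{i}$ is surjective on $\mathcal{A}^{0}$ fppf-locally. For $l \ne p$ this holds because $l$ is invertible on $S$ and $\mathcal{A}^{0}$ is smooth; for $l = p$ under semistability it holds because each fiber of $\mathcal{A}^{0}$ is semiabelian, and multiplication by any integer is an fppf epimorphism on a semiabelian variety. In particular $\mathcal{A}^{0} / l^{i} = 0$ as fppf sheaves, so \eqref{eq: first derived functor of formal Tate module} immediately gives $R^{1} \underline{T}_{l}(\mathcal{A}^{0}) = 0$, which is the second ML-isomorphism in (a).

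To obtain the other two ML-isomorphisms in (a), I would take the long exact $l^{i}$-torsion sequence of the component-group sequence. Since $\mathcal{A}^{0} / l^{i} = 0$, this yields a short exact sequence
\[
    0 \to {}_{l^{i}} \mathcal{A}^{0} \to {}_{l^{i}} \mathcal{A} \to \bigoplus_{v} i_{v \ast} {}_{l^{i}} \pi_{0}(\mathcal{A}_{v}) \to 0
\]
and an isomorphism $\mathcal{A} / l^{i} \cong \bigoplus_{v} i_{v \ast} \pi_{0}(\mathcal{A}_{v}) / l^{i}$. Each $\pi_{0}(\mathcal{A}_{v})$ is finite, so $\{{}_{l^{i}} \pi_{0}(\mathcal{A}_{v})\}_{i}$ is ML-zero (a fixed power of $l$ annihilates the $l$-primary part, so suitably many transition maps compose to zero), giving $\underline{T}_{l}(\mathcal{A}) \cong \underline{T}_{l}(\mathcal{A}^{0})$ in the ML sense; and $\{\pi_{0}(\mathcal{A}_{v}) / l^{i}\}_{i}$ stabilizes to $\pi_{0}(\mathcal{A}_{v}) \tensor \Z_{l}$ with identity transitions for $i \gg 0$, so it is ML-isomorphic to the corresponding constant system.

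For (b), when $l \ne p$ the group scheme ${}_{l^{i}} \mathcal{A}^{0}$ is constructible étale on $S$ (finite étale over the good-reduction locus, quasi-finite étale in general), so finiteness of its $\et$-cohomology on the curve $S / \F_{q}$ is standard. When $l = p$ and $A$ is semistable, $\mathcal{A}^{0}$ is a semiabelian scheme over $S$ and ${}_{p^{i}} \mathcal{A}^{0}$ is therefore a finite flat commutative group scheme on $S$; finiteness of its fppf cohomology on the smooth projective curve $S / \F_{q}$ then follows from the Artin--Milne theory of finite flat cohomology on curves over finite fields. I expect this semistable $p$-adic finiteness to be the main technical step.

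Given (b), the finiteness of the groups $\{H^{n}_{\fppf}(S, {}_{l^{i}} \mathcal{A}^{0})\}_{i}$ makes $\invlim^{1}$ vanish on them, and Jannsen's spectral sequence from \cite[\S 3]{Jan88} degenerates to give (c). Finally, (d) comes from the standard distinguished triangle
\[
    \underline{T}_{l}(\mathcal{A})[0] \to R \underline{T}_{l}(\mathcal{A}) \to R^{1} \underline{T}_{l}(\mathcal{A})[-1] \to \underline{T}_{l}(\mathcal{A})[1]
\]
in $D^{+}(\Ab(S_{\fppf})^{\N})$: apply $R \Gamma_{\fppf}(S, \;\cdot\;)$, replace $\underline{T}_{l}(\mathcal{A})$ by $\underline{T}_{l}(\mathcal{A}^{0})$ and $R^{1} \underline{T}_{l}(\mathcal{A})$ by the constant system $\bigoplus_{v} i_{v \ast} \pi_{0}(\mathcal{A}_{v}) \tensor \Z_{l}$ using the ML-isomorphisms from (a) (which induce genuine isomorphisms after $R \Gamma_{\fppf}$, since ML-zero systems have vanishing $R \Gamma_{\fppf}$), and identify the cohomology of the constant skyscraper system as $\bigoplus_{v} H^{\ast}_{\et}(k(v), \pi_{0}(\mathcal{A}_{v})) \tensor \Z_{l}$ via $R \Gamma_{\fppf}(S, i_{v \ast} F) = R \Gamma_{\et}(k(v), F)$ for the étale sheaf $F = \pi_{0}(\mathcal{A}_{v})$. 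The resulting long exact sequence is exactly the one stated in (d).
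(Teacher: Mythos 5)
Your parts (a), (c) and (d) follow the paper's argument essentially verbatim: the component-group sequence together with faithful flatness of multiplication by $l^{i}$ on $\mathcal{A}^{0}$ (the paper cites \cite[III, Cor.\ C.9]{Mil06} for this, covering both $l \ne p$ and the semistable $l = p$ case) gives (a); finiteness plus vanishing of $\invlim^{1}$ and \cite[(3.1)]{Jan88} gives (c); and the triangle for $R \underline{T}_{l}(\mathcal{A})$ combined with the ML-isomorphisms of (a) gives (d).

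Part (b) is where you diverge, and your $l = p$ argument rests on a false premise. In the semistable case ${}_{p^{i}}(\mathcal{A}^{0})$ is \emph{not} a finite flat group scheme over $S$: at a place $v$ where the reduction has positive toric rank $t$, the special fiber of $\mathcal{A}^{0}$ is semiabelian and its $p^{i}$-torsion has order $p^{i(2d - t)} < p^{2id}$, so the kernel of $p^{i}$ is only quasi-finite, flat and separated over $S$ --- exactly as the paper points out immediately after the statement of the proposition. The Artin--Milne finiteness theory for finite flat group schemes therefore does not apply directly; you would need to extend it to quasi-finite flat separated group schemes (say by excising the bad fibers and controlling the local terms), which is real additional work that you have not supplied. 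The paper takes a different and self-contained route: it runs the Hochschild--Serre spectral sequence $H_{\et}^{s}(\F_{q}, H_{\et}^{t}(\closure{S}, \closure{\mathcal{A}})) \Rightarrow H_{\et}^{s+t}(S, \mathcal{A})$ and uses the structure results on the geometric cohomology (Prop.\ \ref{prop: geometric cohomology}, ultimately from \cite{Suz19}) to show that multiplication by $l^{i}$ on $H_{\et}^{n}(S, \mathcal{A}^{0})$ has finite kernel and cokernel, and then reads off the finiteness of $H_{\fppf}^{n}(S, {}_{l^{i}}(\mathcal{A}^{0}))$ from the Kummer-type exact sequence $0 \to H_{\et}^{n-1}(S, \mathcal{A}^{0}) \tensor \Z / l^{i} \Z \to H_{\fppf}^{n}(S, {}_{l^{i}}(\mathcal{A}^{0})) \to {}_{l^{i}} H_{\et}^{n}(S, \mathcal{A}^{0}) \to 0$. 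Either adopt that argument or justify the quasi-finite flat finiteness statement you are implicitly invoking.
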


Under the stated assumption,
the closed group subschemes ${}_{l^{i}}(\mathcal{A}^{0})$ of $\mathcal{A}^{0}$ over $S$
are quasi-finite, flat and separated over $S$ by \cite[III, Cor.\ C.9]{Mil06}.
They are \'etale if $l \ne p$,
and finite over the locus of $S$ where $A$ has good reduction.

\begin{proof}
	\eqref{item: cohom of Tate module: each degree}
	We have an exact sequence
		\[
				0
			\to
				\mathcal{A}^{0}
			\to
				\mathcal{A}
			\to
				\bigoplus_{v}
					i_{v \ast} \pi_{0}(\mathcal{A}_{v})
			\to
				0
		\]
	in $\Ab(S_{\fppf})$.
	Under the stated assumption, the multiplication by $l^{i}$ on $\mathcal{A}^{0}$ is faithfully flat
	for any $i$ by \cite[III, Cor.\ C.9]{Mil06}.
	Hence $R^{n} \underline{T}_{l}(\mathcal{A}^{0}) = 0$ for $n \ge 1$
	by \eqref{eq: first derived functor of formal Tate module}.
	Since the component groups are finite,
	we know that $\underline{T}_{l}(i_{v \ast} \pi_{0}(\mathcal{A}_{v}))$ is ML-zero and
	$R^{1} \underline{T}_{l}(i_{v \ast} \pi_{0}(\mathcal{A}_{v}))$ is ML-isomorphic to
	$i_{v \ast} \pi_{0}(\mathcal{A}_{v}) \tensor \Z_{l}$
	by \eqref{eq: first derived functor of formal Tate module}.
	This implies the result.
	
	\eqref{item: cohom of Tate module: finiteness}
	Consider the Hochschild-Serre spectral sequence
		\[
				E_{2}^{s t}
			=
				H_{\et}^{s}(\F_{q}, H_{\et}^{t}(\closure{S}, \closure{\mathcal{A}}))
			\Longrightarrow
				H_{\et}^{s + t}(S, \mathcal{A}).
		\]
	Using Prop.\ \ref{prop: geometric cohomology},
	we know that the kernels and cokernels of multiplication by $l^{i}$ on the $E_{2}$-terms
	are finite for all $i$.
	Hence the same is true for $H_{\et}^{n}(S, \mathcal{A})$
	and therefore for $H_{\et}^{n}(S, \mathcal{A}^{0})$.
	The long exact sequence associated with the sequence
	$0 \to {}_{l^{i}}(\mathcal{A}^{0}) \to \mathcal{A}^{0} \overset{l^{i}}{\to} \mathcal{A}^{0} \to 0$
	in $\Ab(S_{\fppf})$ gives an exact sequence
		\[
				0
			\to
				H_{\et}^{n - 1}(S, \mathcal{A}^{0}) \tensor \Z / l^{i} \Z
			\to
				H_{\fppf}^{n}(S, {}_{l^{i}}(\mathcal{A}^{0}))
			\to
				{}_{l^{i}}
				H_{\et}^{n}(S, \mathcal{A}^{0})
			\to
				0.
		\]
	Thus the middle term is also finite.
	
	\eqref{item: cohom of Tate module: by naive l adic cohomology}
	The previous assertion implies that the first derived inverse limit
	$\invlim_{i}^{1} H_{\fppf}^{n}(S, {}_{l^{i}}(\mathcal{A}^{0}))$ is zero.
	Hence the result follows from \cite[(3.1)]{Jan88}.
	
	\eqref{item: cohom of Tate module: exact sequence}
	The assertions above and the long exact sequence for $H_{\fppf}^{\ast}$ give the result.
\end{proof}

\begin{Cor}
	Assume that $l \ne p$ or $A$ has semistable reduction everywhere.
	We have a canonical homomorphism
		\[
				H_{W}^{n}(S, \mathcal{A}^{0}) \tensor \Z_{l}
			\to
				\invlim_{i}
					H_{\fppf}^{n + 1}(S, {}_{l^{i}}(\mathcal{A}^{0}))
		\]
	for any $n$.
	It is an isomorphism if $\Sha(A)$ is finite.
\end{Cor}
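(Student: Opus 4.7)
The plan is to assemble the corollary directly from the two propositions just proved. From Prop.\ \ref{prop: integral models in derived setting} applied to $\mathcal{A}^{0}$, we already have a canonical morphism
	\[
			R \Gamma_{W}(S, \mathcal{A}^{0}) \tensor \Z_{l}
		\to
			R \Gamma_{\fppf}(S, R \underline{T}_{l}(\mathcal{A}^{0}))[1]
	\]
in $D(\Ab)$, and this morphism is an isomorphism when $\Sha(A)$ is finite (since then $H_{W}^{\ast}(S, \mathcal{A}^{0})$ is finitely generated by Prop.\ \ref{prop: finite generation}, so $(\cdot) \tensor \Z_{l}$ agrees with $R T_{l}(\cdot)[1]$). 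Passing to $H^{n}$ of the target produces a morphism from $H_{W}^{n}(S, \mathcal{A}^{0}) \tensor \Z_{l}$ to $H_{\fppf}^{n + 1}(S, R \underline{T}_{l}(\mathcal{A}^{0}))$, again an isomorphism under finiteness of $\Sha(A)$.

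The remaining task is to rewrite this target group as the naive $l$-adic inverse limit $\invlim_{i} H_{\fppf}^{n + 1}(S, {}_{l^{i}}(\mathcal{A}^{0}))$, and this is precisely where the hypothesis ``$l \neq p$ or $A$ has semistable reduction everywhere'' is used. Under this hypothesis, Prop.\ \ref{prop: integral models in abelian setting}\eqref{item: cohom of Tate module: each degree} gives $R^{1} \underline{T}_{l}(\mathcal{A}^{0}) = 0$ (ML-zero), so $R \underline{T}_{l}(\mathcal{A}^{0})$ is ML-isomorphic to the system $\underline{T}_{l}(\mathcal{A}^{0})$ concentrated in degree zero. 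Since ML-isomorphic systems have the same derived $\Gamma(S, \cdot)$, this identifies $H_{\fppf}^{n + 1}(S, R \underline{T}_{l}(\mathcal{A}^{0}))$ with $H_{\fppf}^{n + 1}(S, \underline{T}_{l}(\mathcal{A}^{0}))$.

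Finally, assertions \eqref{item: cohom of Tate module: finiteness} and \eqref{item: cohom of Tate module: by naive l adic cohomology} of Prop.\ \ref{prop: integral models in abelian setting} supply a natural isomorphism
	\[
			H_{\fppf}^{n + 1}(S, \underline{T}_{l}(\mathcal{A}^{0}))
		\isomto
			\invlim_{i}
				H_{\fppf}^{n + 1}(S, {}_{l^{i}}(\mathcal{A}^{0})),
	\]
the $\invlim^{1}$-term vanishing because the finite-level cohomology groups are finite. Composing the three steps yields the desired canonical homomorphism, and finiteness of $\Sha(A)$ promotes it to an isomorphism. No genuinely new difficulty appears beyond what is already contained in the cited propositions; the only substantive point is the vanishing of $R^{1} \underline{T}_{l}(\mathcal{A}^{0})$, which translates the derived-category statement into an honest inverse limit of $l^{i}$-torsion cohomology and explains why the semistability hypothesis is imposed in the $l = p$ case.
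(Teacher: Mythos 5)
Your proposal is correct and follows essentially the same route the paper intends: the corollary is exactly the assembly of Prop.~\ref{prop: integral models in derived setting} (applied to $\mathcal{A}^{0}$, giving the map and the isomorphism under finiteness of $\Sha(A)$) with Prop.~\ref{prop: integral models in abelian setting}, parts \eqref{item: cohom of Tate module: each degree}--\eqref{item: cohom of Tate module: by naive l adic cohomology} (vanishing of $R^{1}\underline{T}_{l}(\mathcal{A}^{0})$ under the stated hypothesis, plus finiteness of the finite-level groups to kill $\invlim^{1}$ and identify the target with the naive inverse limit). Your identification of where the semistability hypothesis enters is precisely the paper's point, so there is nothing to add.
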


Note that if $l = p$ and $A$ has non-semistable reduction at some $v$,
then the multiplication by $p$ on $\mathcal{A}$ and $\mathcal{A}^{0}$ has a non-flat kernel
and a non-representable cokernel.

%%%%%%%%%%%%%%%%%%%%%%%%%%%%%%%%%%%%%%%%%%%%%%%%%%%%%%%%%%%%%%%%%%%%%%%%%%%%%

\end{document}